\documentclass[12pt]{amsart}
\usepackage{amsmath,mathrsfs,amsthm,latexsym,amsbsy,amssymb, setspace,nicefrac, yhmath, amscd,eucal}
\usepackage[active]{srcltx}
\usepackage{amsrefs, mathrsfs}
\usepackage[english]{babel}

\newtheorem{theorem}{Theorem}[section]
\newtheorem{remark}[theorem]{Remark}
\newtheorem{corollary}[theorem]{Corollary}
\newtheorem{lemma}[theorem]{Lemma}
\newtheorem{prop}[theorem]{Proposition}
\theoremstyle{definition}\newtheorem{df}[theorem]{Definition}
\theoremstyle{definition}
\numberwithin{equation}{section}

\usepackage{enumerate}
\def\minus {\backslash }
\usepackage{multicol,amstext,graphics,graphicx}
\usepackage{fancyhdr}
\usepackage{hyperref}

\allowdisplaybreaks

\setlength{\textwidth}{6.45 in}
\setlength{\textheight}{8.7 in}
\setlength{\topmargin}{0 in}
\setlength{\oddsidemargin}{0 in}
\linespread{1.1}
\setlength{\oddsidemargin}{0 in}
\setlength{\evensidemargin}{0.1 in}

\newcommand{\dpcm}{ \hfill $\square$}
\newcommand{\cM}{{\mathcal M}}
\newcommand{\cF}{{\mathcal F}}

\newcommand{\supp}{{\rm supp}}
\newcommand{\C}{{\mathbb C}}
\newcommand{\bF}{{\mathbb F}}
\newcommand{\R}{{\mathbb R}}
\newcommand{\N}{{\mathbb N}}

\newcommand{\Opt}{{\rm Opt}}

\title[Duality and quotients spaces of generalized Wasserstein spaces]{Duality and quotient spaces of generalized Wasserstein spaces}
\author{Nhan-Phu Chung}
\address{Nhan-Phu Chung, Department of Mathematics, Sungkyunkwan University, Suwon 440-746, Korea. Tel: +82 031-299-4819} 
\email{phuchung@skku.edu;phuchung82@gmail.com}
\author{ Thanh-Son Trinh}
\address{Thanh-Son Trinh, Department of Mathematics, Sungkyunkwan University, Suwon 440-746, Korea.}
\email{sontrinh@skku.edu}
\begin{document}
\date{\today}
\maketitle

\begin{abstract}
In this article, using ideas of Liero,  Mielke and Savar\'{e} in \cite{Liero} we establish a Kantorovich duality for generalized Wasserstein distances $W_1^{a,b}$ on a generalized Polish metric space, introduced by Picolli and Rossi in \cite{PR14}. As a consequence, we give another proof that $W_1^{a,b}$ coincide with flat metrics which is a main result of \cite{PR16}, and therefore we get a result of independent interest that $\left(\mathcal{M}(X), W^{a,b}_1\right)$ is a geodesic space for every Polish metric space $X$. We also prove that $(\cM^G(X),W_p^{a,b})$ is isometric isomorphism to $(\cM(X/G),W_p^{a,b})$ for isometric actions of a compact group $G$ on a Polish metric space $X$; and several results of Gromov-Hausdorrf convergence and equivariant Gromov-Hausdorff convergence of generalized Wasserstein spaces. The latter results were proved for standard Wasserstein spaces in \cite{LV},\cite{Garcia} and \cite{NPC} respectively.      
\end{abstract}

\section{Introduction}  
The Monge-Kantorovich's balanced optimal transport problem has been studied extensively after pioneer works of Kantorovich on 1940s \cite{Kant42,Kant48}. In connection with this problem, Wassertein distances in the space of probability measures are powerful tools to study gradient flows and partial differential equations \cite{Ambrosio} and theory of Ricci curvature bounded below for general metric-measure spaces \cite{LV,Sturm}.

Recently, unbalanced optimal transport problems and various generalized Wasserstein distances on the space of finite measures have been introduced and investigated by numerous authors \cite{CPSV,KMV,Liero, PR14}. In \cite{PR14}, Piccoli and Rossi defined a generalized Wassertein distance $W^{a,b}_p(\mu,\nu)$, combining the usual Wasserstein distance and $L^1$-distance. After that, they also proved the generalized Benamou-Breiner formula for $W^{a,b}_p$ and showed that the generalized Wasserstein distance $W_1^{1,1}$ coincides with the flat metric \cite{PR16}. As natural we would ask which other properties of standard Wasserstein distances still hold for generalized Wasserstein distances $W^{a,b}_p$.

In this article, our first result is the Kantorovich duality for the distance $W^{a,b}_1$. In \cite{Liero}, Liero, Mielke and Savar\'{e} established Kantorovich duality for various Entropy-Transport problems where entropy functions satisfy coercive conditions. As our nonsmooth entropy function $F(s)=a|1-s|$ is not superlinear and the cost function $b.d(\cdot,\cdot)$ does not have compact sublevels when $X$ is a general Polish metric space we can not get the Kantorovich duality in our setting directly from \cite{Liero}. However, inspiring from their methods we can prove that   
\begin{theorem}
\label{T-duality of generalized Wasserstein spaces}
Let $X$ be a Polish metric space. For any $\mu_1,\mu_2\in\mathcal{M}(X)$, we have 
\begin{align*}
W_1^{a,b}(\mu_1,\mu_2)=\sup\limits_{(\varphi_1,\varphi_2)\in\Phi_W} \sum\limits_i \int_X I\left(\varphi_i(x)\right) d\mu_i(x),
\end{align*}
where $I(\varphi)=\inf\limits_{s\geq 0} \left(s\varphi+a\vert 1-s\vert\right) \text{ for }\varphi\in\mathbb{R}$, and 
$$\Phi_W:= \{(\varphi_1,\varphi_2)\in C_b(X)\times C_b(X)\;\vert\; \varphi_1(x)+\varphi_2 (y)\leq b.d(x,y)\text{ and } \varphi_1 (x), \varphi _2(y)\geq -a,\;\forall x,y\in X\}.$$
\end{theorem}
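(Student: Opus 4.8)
The plan is to recognize the right-hand side as the Kantorovich dual of an entropy--transport problem and to follow the scheme of \cite{Liero}, splitting the proof into an elementary inequality and the absence of a duality gap. First I would record the pointwise identity $I(\varphi)=-F^{*}(-\varphi)$, where $F(s)=a|1-s|$ and $F^{*}(p)=\sup_{s\ge 0}(ps-F(s))$ is its one-sided Legendre transform; an elementary computation gives $I(\varphi)=\min(\varphi,a)$ for $\varphi\ge -a$ and $I(\varphi)=-\infty$ for $\varphi<-a$. In particular $I$ is nondecreasing and bounded above by $a$, so the constraint $\varphi_i\ge -a$ in $\Phi_W$ merely restricts the supremum to the effective domain of the objective.

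For the inequality $\sup_{\Phi_W}\le W_1^{a,b}$ I would argue directly from the Piccoli--Rossi representation
\[
W_1^{a,b}(\mu_1,\mu_2)=\inf\Big\{a\big(|\mu_1-\tilde\mu_1|+|\mu_2-\tilde\mu_2|\big)+b\,W_1(\tilde\mu_1,\tilde\mu_2): \tilde\mu_i\le\mu_i,\ |\tilde\mu_1|=|\tilde\mu_2|\Big\}.
\]
Given $(\varphi_1,\varphi_2)\in\Phi_W$, an admissible pair $\tilde\mu_i=s_i\mu_i$ with $0\le s_i\le1$, and an optimal plan $\pi\in\Pi(\tilde\mu_1,\tilde\mu_2)$ for $W_1$, the Fenchel--Young inequality $I(\varphi_i)\le s_i\varphi_i+a|1-s_i|$ integrated against $\mu_i$, together with $\int(\varphi_1(x)+\varphi_2(y))\,d\pi\le b\int d(x,y)\,d\pi=b\,W_1(\tilde\mu_1,\tilde\mu_2)$, yields $\sum_i\int I(\varphi_i)\,d\mu_i\le a\sum_i|\mu_i-\tilde\mu_i|+b\,W_1(\tilde\mu_1,\tilde\mu_2)$; taking the infimum over competitors and then the supremum over $\Phi_W$ gives the claim.

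The substance is the reverse inequality, and the two obstructions flagged in the introduction---that $F$ is not superlinear and that $b\,d$ has no compact sublevels---I would neutralize by a single observation: destroying a unit of mass at $x$ and recreating it at $y$ costs $2a$, so the effective transport cost is $c(x,y):=\min(b\,d(x,y),2a)$, which is bounded. Replacing $b\,d$ by $c$ leaves $W_1^{a,b}$ unchanged, and on the dual side one checks, by capping each $\varphi_i$ at $a$ and then raising it to be $\ge-a$ (both operations preserve $\varphi_1(x)+\varphi_2(y)\le b\,d(x,y)$ and do not decrease $\sum_i\int I(\varphi_i)\,d\mu_i$), that the supremum may be taken over potentials with $-a\le\varphi_i\le a$, which by $c$-concavity may further be assumed $b$-Lipschitz. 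I would first prove equality for $X$ compact: there the competing potentials form an equi-Lipschitz, uniformly bounded family, hence a compact subset of $C(X)$ by Arzel\`a--Ascoli, and equality follows from a Fenchel--Rockafellar minimax argument in the duality $C(X)'=\cM(X)$ (equivalently, classical Kantorovich duality on the one-point completion $X\sqcup\{\omega\}$ with the bounded continuous cost extending $c$ by $c(\cdot,\omega)=c(\omega,\cdot)=a$).

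Finally I would pass to a general Polish space $X$ by tightness. The finite Radon measures $\mu_1,\mu_2$ concentrate, up to $\varepsilon$ mass, on a compact set $K_\varepsilon$; applying the compact case on $K_\varepsilon$ and extending the resulting potentials to $X$ by the McShane formula---which preserves both the $b$-Lipschitz bound and the range $[-a,a]$, hence membership in $\Phi_W$---produces admissible potentials whose dual value differs from $W_1^{a,b}(\mu_1,\mu_2)$ by at most a constant times $a\varepsilon$, the contribution of the neglected mass being bounded by $a\,\mu_i(X\setminus K_\varepsilon)\le a\varepsilon$; letting $\varepsilon\to0$ closes the gap. I expect this last passage to be the main technical obstacle, precisely because the absence of superlinearity and of compact sublevels rules out extracting a convergent optimal plan on $X$ directly, and the bounded effective cost together with the finiteness of the total masses is exactly what replaces the missing compactness.
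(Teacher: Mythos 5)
Your overall architecture matches the paper's: the easy inequality via Fenchel--Young against the Piccoli--Rossi representation (this is exactly Lemma \ref{L-easy part of duality}), a compact case settled by a duality/minimax argument, and a passage to general Polish $X$ by tightness with an extension of near-optimal potentials. Where you genuinely diverge is the compact step: the paper plugs the entropy term into Liero--Mielke--Savar\'e's duality for $\cF(\gamma|\mu)$ (Theorem \ref{T-duality of entropy functional}), builds the Lagrangian $L(\gamma,(\varphi_1,\varphi_2))$ on $M\times\Phi$, and invokes their minimax theorem using weak*-compactness of a sublevel set of plans (compactness on the ``inf'' side), whereas you truncate the cost to $c=\min(b\,d,2a)$ and put the compactness on the ``sup'' side via Arzel\`a--Ascoli, or equivalently reduce to classical Kantorovich duality on $X\sqcup\{\omega\}$ with $c(\cdot,\omega)=a$. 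The truncation identity $W_1^{a,b}=\inf_\gamma\{a|\mu_1-\gamma_1|+a|\mu_2-\gamma_2|+\int c\,d\gamma\}$ is correct (reroute any mass moved farther than $2a/b$ through destruction/creation), and the one-point-extension route is sound; it is arguably more elementary than the paper's, and since Kantorovich duality with $C_b$ potentials holds on arbitrary Polish spaces, that reduction could in principle absorb the non-compact case as well, which the paper's minimax argument cannot.

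The one step that would fail as literally written is the extension in the Polish case. You propose to extend the potentials obtained on $K_\varepsilon$ ``by the McShane formula,'' asserting this preserves membership in $\Phi_W$. Extending $\varphi_1^*$ and $\varphi_2^*$ \emph{separately} by McShane (or by truncated McShane, which is what you need to keep the range in $[-a,a]$) preserves the Lipschitz constant and the bounds, but it does \emph{not} preserve the coupled constraint $\varphi_1(x)+\varphi_2(y)\leq b\,d(x,y)$ off $K_\varepsilon\times K_\varepsilon$: for $x,y\notin K_\varepsilon$ the two infima are achieved at unrelated points $u,v\in K_\varepsilon$ and the triangle inequality runs the wrong way, giving only $\overline{\varphi}_1(x)+\overline{\varphi}_2(y)\leq b\,d(x,u)+b\,d(u,v)+b\,d(v,y)$, which is an upper bound \emph{larger} than $b\,d(x,y)$. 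The extension must be done sequentially by $c$-transforms, as in Step 2 of the paper's proof: set $\overline{\varphi}_1(x)=\min\{\inf_{y\in K_\varepsilon}(b\,d(x,y)-\varphi_2^*(y)),a\}$ on all of $X$, and only then $\overline{\varphi}_2(y)=\inf_{x\in X}(b\,d(x,y)-\overline{\varphi}_1(x))$; one then checks $\overline{\varphi}_i\geq\varphi_i^*$ on $K_\varepsilon$ and $\overline{\varphi}_i\in[-a,a]$, so the objective loses only the $2a\varepsilon$ coming from $X\setminus K_\varepsilon$ where $I(\overline{\varphi}_i)\geq -a$. Since you already deploy $c$-transforms earlier in your outline, this is a repair rather than a missing idea, but as stated the step is wrong.
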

As a consequence, we get a version of Kantorovich-Rubinstein theorem for generalized Wasserstein distance $W_1^{a,b}$, which is a main result of \cite{PR16} and is proved by a different method there. 
\begin{theorem}
\label{T-flat metrics}
Let $(X,d)$ be a Polish metric space, Then for every $a,b>0,  \mu,\nu\in \cM(X)$ we have 
$$W_1^{a,b}(\mu,\nu)=\sup\big\{\int_X fd(\mu-\nu):f\in \bF\big\},$$
where $\bF:=\big\{f\in C_b(X), \|f\|_\infty\leq a, \|f\|_{Lip}\leq b\big\}$.
\end{theorem}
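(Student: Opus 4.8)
The plan is to derive Theorem 1.2 directly from the duality in Theorem 1.1 by simplifying the supremum over $\Phi_W$. The key observation is to compute the function $I$ explicitly. Since $I(\varphi)=\inf_{s\geq 0}\left(s\varphi+a|1-s|\right)$, I would split on the sign of $\varphi$: for $s\geq 1$ the expression is $s\varphi+a(s-1)=s(\varphi+a)-a$, and for $0\leq s\leq 1$ it is $s\varphi+a(1-s)=s(\varphi-a)+a$. Minimizing over each piece shows that when $\varphi\geq a$ the infimum is attained at $s=0$ giving $I(\varphi)=a$; when $-a\leq \varphi\leq a$ the infimum is at $s=1$ giving $I(\varphi)=\varphi$; and when $\varphi<-a$ the infimum is $-\infty$. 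Hence for the supremum to be finite we only need consider $\varphi_i\geq -a$, which is already built into $\Phi_W$, and on that range $I(\varphi_i)=\min(\varphi_i,a)$.

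Next I would reduce the two-function problem to a single-function one. The plan is to show that the optimal pairs $(\varphi_1,\varphi_2)$ can be taken of the form $(f,-f)$ for a single $f$. Given any admissible $(\varphi_1,\varphi_2)\in\Phi_W$, the constraint $\varphi_1(x)+\varphi_2(y)\leq b\,d(x,y)$ with the symmetry and the triangle inequality suggests replacing $\varphi_1$ by its $c$-transform $f(x):=\inf_y\left(b\,d(x,y)-\varphi_2(y)\right)$; this $f$ satisfies $\|f\|_{\mathrm{Lip}}\leq b$ since it is an infimum of $b$-Lipschitz functions, and $f(x)+\varphi_2(y)\leq b\,d(x,y)$ with $f\geq\varphi_1$, so it only increases the objective. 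Setting $g:=-f$ one checks $\varphi_2\leq g$ pointwise as well, so $(f,-f)$ dominates $(\varphi_1,\varphi_2)$ and remains admissible.

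With the reduction to pairs $(f,-f)$, the constraints $\varphi_i\geq -a$ become $-a\leq f\leq a$, i.e. $\|f\|_\infty\leq a$, and the Lipschitz bound $\|f\|_{\mathrm{Lip}}\leq b$ persists; these are exactly the conditions defining $\bF$. On this restricted class, since $|f|\leq a$ we have $I(f)=\min(f,a)=f$ and $I(-f)=\min(-f,a)=-f$, so the objective collapses to
\begin{align*}
\int_X I(\varphi_1)\,d\mu+\int_X I(\varphi_2)\,d\nu=\int_X f\,d\mu-\int_X f\,d\nu=\int_X f\,d(\mu-\nu).
\end{align*}
Taking the supremum over $f\in\bF$ then yields the claimed formula.

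The main obstacle I anticipate is the reduction step of the second paragraph: making rigorous that symmetrizing to $(f,-f)$ never decreases the objective. The subtlety is that $I$ is applied separately to $\varphi_1$ and $\varphi_2$, and since $I(\varphi)=\min(\varphi,a)$ is concave and monotone nondecreasing on $[-a,\infty)$, replacing $\varphi_1$ by the larger $f\geq\varphi_1$ can only increase $\int I(\varphi_1)\,d\mu$, and likewise replacing $\varphi_2$ by $-f\geq\varphi_2$ can only increase $\int I(\varphi_2)\,d\nu$; the monotonicity of $I$ is precisely what makes the $c$-transform argument work. I would need to verify carefully that $-f\geq\varphi_2$ holds, which follows from $f(x)\leq b\,d(x,y)-\varphi_2(y)$ evaluated at $x=y$ together with $d(y,y)=0$, giving $f(y)\leq-\varphi_2(y)$ as desired.
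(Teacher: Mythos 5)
Your proposal is correct and follows essentially the same route as the paper: apply the duality of Theorem \ref{T-duality of generalized Wasserstein spaces}, replace an admissible pair by its $c$-transform $\varphi^d(x)=\inf_y[b\,d(x,y)-\varphi(y)]$, observe that for a $b$-Lipschitz bounded function the second transform is just its negative (you verify $-f\geq\varphi_2$ directly at $x=y$, which is the same computation), and use the monotonicity of $I$ together with $I(\varphi)=\varphi$ on $[-a,a]$ to collapse the objective to $\int_X f\,d(\mu-\nu)$ over $f\in\bF$. The only cosmetic difference is that the paper phrases the reduction as a chain of suprema sandwiched back to $\sup_{\Phi_W}$ rather than as a pointwise domination of each pair, but the content is identical.
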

And from that we get the following result which is independent of interest.
\begin{corollary}
\label{C-geodesic space}
Let $(X,d)$ be a Polish metric space and let $a,b>0$. Then $\left(\mathcal{M}(X), W^{a,b}_1\right)$ is a geodesic space.
\end{corollary}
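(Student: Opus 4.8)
The plan is to read off the geodesic property directly from the dual (flat-metric) representation in Theorem \ref{T-flat metrics}, which exhibits $W_1^{a,b}$ as the distance induced by a norm on the ambient vector space of finite signed measures. First I would record that the class $\bF$ is symmetric, i.e.\ $f\in\bF$ if and only if $-f\in\bF$, since both $\|\cdot\|_\infty$ and $\|\cdot\|_{Lip}$ are invariant under $f\mapsto -f$. Consequently, for any finite signed measure $\rho$ the quantity $\|\rho\|:=\sup_{f\in\bF}\int_X f\,d\rho$ is symmetric (so that $\|-\rho\|=\|\rho\|$) and positively homogeneous, hence absolutely homogeneous, and by Theorem \ref{T-flat metrics} we have $W_1^{a,b}(\mu,\nu)=\|\mu-\nu\|$ for all $\mu,\nu\in\cM(X)$.

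Next, given $\mu,\nu\in\cM(X)$, I would take the linear interpolation $\mu_t:=(1-t)\mu+t\nu$ for $t\in[0,1]$. Each $\mu_t$ is a convex combination of nonnegative finite measures, hence again a nonnegative finite measure, so the curve stays inside $\cM(X)$ with $\mu_0=\mu$ and $\mu_1=\nu$. For $0\le s\le t\le 1$ one computes $\mu_s-\mu_t=(t-s)(\mu-\nu)$, whence by homogeneity $W_1^{a,b}(\mu_s,\mu_t)=\|(t-s)(\mu-\nu)\|=(t-s)\|\mu-\nu\|=|t-s|\,W_1^{a,b}(\mu,\nu)$, and the symmetry of $\bF$ removes the ordering assumption so that the equality holds for all $s,t\in[0,1]$. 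This is precisely the constant-speed geodesic identity, so $t\mapsto\mu_t$ is a geodesic joining $\mu$ to $\nu$; since $\mu,\nu$ were arbitrary, $(\cM(X),W_1^{a,b})$ is a geodesic space.

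I do not expect a genuine obstacle here: all of the analytic difficulty has been absorbed into Theorem \ref{T-flat metrics}, and what remains is the observation that a distance arising from a symmetric, homogeneous norm is automatically geodesic via straight lines, together with the trivial check that nonnegativity and finiteness of measures are preserved under convex combinations. The only point requiring a moment's care is the symmetry of $\bF$, which is what guarantees that the supremum defining $\|\cdot\|$ is genuinely homogeneous rather than merely sublinear, so that one obtains the two-sided equality $W_1^{a,b}(\mu_s,\mu_t)=|t-s|\,W_1^{a,b}(\mu,\nu)$ and not just an inequality.
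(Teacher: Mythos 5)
Your proof is correct, and it takes a slightly different route from the paper's. The paper also reduces everything to Theorem \ref{T-flat metrics}, but only verifies the \emph{midpoint} property: it sets $\sigma=(\mu+\nu)/2$, computes $W_1^{a,b}(\mu,\sigma)=W_1^{a,b}(\sigma,\nu)=\tfrac12 W_1^{a,b}(\mu,\nu)$ from the dual formula, and then invokes completeness of $(\cM(X),W_1^{a,b})$ (Proposition \ref{P-completeness}) together with the abstract criterion that a complete metric space with midpoints is geodesic (\cite[Lemma 2.1]{Sturm} or \cite[Theorem 2.4.16]{Burago}). You instead check the full constant-speed identity $W_1^{a,b}(\mu_s,\mu_t)=|t-s|\,W_1^{a,b}(\mu,\nu)$ along the linear interpolation $\mu_t=(1-t)\mu+t\nu$, using that $\rho\mapsto\sup_{f\in\bF}\int_X f\,d\rho$ is absolutely homogeneous because $\bF$ is symmetric under $f\mapsto -f$. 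The two arguments share the same underlying mechanism (the paper's midpoint $\sigma$ is exactly your $\mu_{1/2}$), but yours buys something concrete: it exhibits an explicit geodesic and dispenses entirely with the completeness hypothesis and the midpoint-to-geodesic lemma, while the paper's version is shorter on the page at the cost of importing those two external facts. All the steps you flag as needing care (symmetry of $\bF$, closure of $\cM(X)$ under convex combinations, applicability of Theorem \ref{T-flat metrics} to the pair $(\mu_s,\mu_t)$) do go through as you describe.
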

On the other hand, in \cite{LV} Lott and Villani established an isometric isomorphism for the Wasserstein spaces $P_2^G(X)$ of $G$-invariant elements in $P_2(X)$ and $P_2(X/G)$, where $X/G$ is the quotient space of $X$ induced from an isometric action of a compact group $G$ on a compact metric space $X$. Later, this result is extended for general metric spaces $X$ in \cite{Garcia}. Our second result is its version for generalized Wasserstein distances $W_p^{a,b}$.
\begin{theorem}\label{T-isometry}
Let a compact group $G$ act on the right of a locally compact Polish metric space $(X,d)$ by isometries. Let $p:X\to X/G$ be the natural quotient map and numbers $a,b>0$, $p\geq 1$. Then 
\begin{enumerate}
\item the map $p_\sharp:\cM_p(X)\to \cM_p(X/G)$ is onto and furthermore for every $\nu^*\in \cM_p(X/G)$ we can find $\mu\in \cM_p^G(X)$ such that $p_\sharp \mu=\nu^*$;
\item $W_p^{a,b}(p_\sharp \mu,p_\sharp\nu)\leq W_p^{a,b}(\mu,\nu)$ for every $\mu,\nu\in \cM(X)$;
\item the map $p_\sharp:(\cM^G(X),W_p^{a,b})\to (\cM(X/G),W_p^{a,b})$ is an isometry;
\item the map $p_\sharp:(\cM^G_p(X),W_p^{a,b})\to (\cM_p(X/G),W_p^{a,b})$ is an isometry. 
\end{enumerate}
\end{theorem}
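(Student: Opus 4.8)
The plan is to treat the four assertions in order, with (1) and (2) as the preparatory constructions and (3)--(4) resting on an averaging argument over the compact group $G$. Throughout I use that the quotient carries the metric $\bar d(p(x),p(y))=\inf_{g\in G}d(x,yg)$, so that $p$ is $1$-Lipschitz and, since $G$ is compact and acts by isometries, every such infimum is attained. Normalized Haar measure on $G$ gives, on each orbit $p^{-1}(\bar x)=xG\cong G/G_x$, the unique $G$-invariant probability measure $\lambda_{\bar x}$, and the map $\bar x\mapsto\lambda_{\bar x}$ is measurable by standard selection results for compact group actions. For part (1), given $\nu^*\in\cM_p(X/G)$ I would set $\mu=\int_{X/G}\lambda_{\bar x}\,d\nu^*(\bar x)$; this measure is $G$-invariant by construction, satisfies $p_\sharp\mu=\nu^*$ because $\lambda_{\bar x}$ is carried by the fiber over $\bar x$, and lies in $\cM_p^G(X)$ because, fixing a basepoint $o$, one has $d(x,o)\le\bar d(p(x),p(o))+\mathrm{diam}(oG)$ with $\mathrm{diam}(oG)<\infty$, so the $p$-th moment of $\mu$ is dominated by that of $\nu^*$.

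For part (2) I would take, for $\mu,\nu\in\cM(X)$, a near-optimal competitor in the defining infimum of $W_p^{a,b}(\mu,\nu)$: submeasures $\tilde\mu\le\mu$ and $\tilde\nu\le\nu$ of equal mass together with an optimal $W_p$-plan $\gamma$ between them, the cost combining the $a$-penalized discarded masses $|\mu-\tilde\mu|,|\nu-\tilde\nu|$ with the $b$-weighted transport term. Pushing everything forward by $p$, the pair $p_\sharp\tilde\mu\le p_\sharp\mu$, $p_\sharp\tilde\nu\le p_\sharp\nu$ has the same discarded masses, while $(p\times p)_\sharp\gamma$ is a plan between $p_\sharp\tilde\mu$ and $p_\sharp\tilde\nu$ whose cost does not increase because $p$ is $1$-Lipschitz. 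Hence this pushed-forward data is admissible for $W_p^{a,b}(p_\sharp\mu,p_\sharp\nu)$ with no larger value, which yields the inequality.

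The crux is part (3): for $G$-invariant $\mu,\nu$ I must prove the reverse inequality $W_p^{a,b}(\mu,\nu)\le W_p^{a,b}(p_\sharp\mu,p_\sharp\nu)$, and for this I would lift an optimal decomposition from the quotient. First I record that a $G$-invariant $\mu$ disintegrates over $p$ with the canonical kernels: by essential uniqueness of disintegration together with $p\circ R_g=p$, the fiber measures are $G$-invariant, hence equal $\lambda_{\bar x}$ almost everywhere, so $\mu=\int\lambda_{\bar x}\,d(p_\sharp\mu)(\bar x)$. Consequently, whenever $\tilde\rho\le p_\sharp\mu$ the measure $\tilde\mu:=\int\lambda_{\bar x}\,d\tilde\rho(\bar x)$ is $G$-invariant, dominated by $\mu$, and satisfies $p_\sharp\tilde\mu=\tilde\rho$. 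Now take an optimal decomposition $\tilde\rho\le p_\sharp\mu$, $\tilde\sigma\le p_\sharp\nu$ realizing $W_p^{a,b}(p_\sharp\mu,p_\sharp\nu)$ and lift it in this way to $G$-invariant $\tilde\mu\le\mu$, $\tilde\nu\le\nu$. The discarded masses are preserved, and since $\tilde\mu,\tilde\nu$ are $G$-invariant of equal mass with $p_\sharp\tilde\mu=\tilde\rho$, $p_\sharp\tilde\nu=\tilde\sigma$, the isometry of the \emph{standard} Wasserstein distance on invariant measures from \cite{LV} and \cite{Garcia} (extended from probabilities to equal-mass measures by normalization) gives $W_p(\tilde\mu,\tilde\nu)=W_p(\tilde\rho,\tilde\sigma)$. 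Thus the lifted decomposition has exactly the quotient cost, proving $\le$ and hence equality.

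The main obstacle is precisely the domination $\tilde\mu\le\mu$ in this lift: it is what forces one to use the $G$-invariant disintegration with its canonical orbit kernels, and it is here that the $G$-invariance of $\mu$ and $\nu$ is indispensable, since for a non-invariant $\mu$ there is no reason a lift of $\tilde\rho$ can be kept below $\mu$. Granting part (3), part (4) follows immediately: part (3) shows $p_\sharp$ is an isometric embedding of $(\cM^G(X),W_p^{a,b})$, its restriction to finite-moment measures lands in $\cM_p(X/G)$ by the $1$-Lipschitz estimate, and part (1) supplies $G$-invariant finite-moment preimages for every element of $\cM_p(X/G)$, so $p_\sharp:(\cM_p^G(X),W_p^{a,b})\to(\cM_p(X/G),W_p^{a,b})$ is a surjective isometry.
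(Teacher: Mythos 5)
Your proposal is correct and follows essentially the same route as the paper: push-forward of an optimal decomposition plus $1$-Lipschitzness of $p$ for the easy inequality, and for the reverse inequality on $\cM^G(X)$ the lift of an optimal quotient decomposition via the canonical orbit kernels $\lambda_{\bar x}$, with the domination $\widetilde\mu\le\mu$ coming from $\mu=\int\lambda_{\bar x}\,d(p_\sharp\mu)$ and the transport term handled by the standard Wasserstein isometry for invariant measures from \cite{Garcia} after normalization. The only cosmetic difference is that you justify $\mu=\int\lambda_{\bar x}\,d(p_\sharp\mu)$ by uniqueness of disintegration, whereas the paper invokes the uniqueness of $G$-invariant lifts from the Haar-measure quotient construction; these are the same fact.
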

Lastly, we prove Gromov-Hausdorff convergence of the generalized Wasserstein spaces and equivariant Gromov-Hausdorff convergence for induced actions on generalized Wasserstein spaces. These results have been established for standard Wasserstein spaces in \cite{LV} and \cite{NPC} respectively.
\begin{theorem}\label{T-convergence of generalized Wasserstein spaces} Let $\left\{\left(X_n, d_n\right)\right\}$ be a sequence of bounded, Polish metric spaces and $C>0$. If $\left\{\left(X_n, d_n\right)\right\}$ converges in the Gromov-Hausdorff topology to a bounded, Polish metric space $(X,d)$ then $\left\{\left(\cM_p^C\left(X_n\right),W^{a,b}_p\right)\right\}$ converges in the Gromov-Hausdorff topology to $\left(\cM_p^C(X), W^{a,b}_p\right)$ for every $a,b>0,p\geq 1$.
\end{theorem}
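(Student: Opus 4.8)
The plan is to realize the Gromov--Hausdorff convergence by approximate isometries at the level of measures, obtained by pushing forward under approximate isometries of the base spaces. Recall that $(X_n,d_n)\to (X,d)$ in the Gromov--Hausdorff topology is equivalent to the existence of a sequence $\varepsilon_n\to 0$ together with Borel $\varepsilon_n$-isometries $f_n\colon X_n\to X$, i.e.\ maps with $|d(f_n(x),f_n(x'))-d_n(x,x')|\le\varepsilon_n$ for all $x,x'\in X_n$ and with $\varepsilon_n$-dense image. Each such $f_n$ admits a Borel approximate inverse $g_n\colon X\to X_n$, which is a $c\varepsilon_n$-isometry satisfying $d_n(g_nf_n(x),x)\le c\varepsilon_n$ and $d(f_ng_n(y),y)\le c\varepsilon_n$ for an absolute constant $c$. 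Since $X$ is bounded and $\varepsilon_n\to 0$, the diameters $\mathrm{diam}(X_n)$ are uniformly bounded, say by $D$, for all large $n$. I would then consider the pushforward maps $F_n:=(f_n)_\sharp\colon \cM_p^C(X_n)\to\cM_p^C(X)$, which are well defined because pushforward preserves total mass (so the bound $\le C$ is kept) and $X$ is bounded (so the $p$-th moment is automatically finite), and prove that $F_n$ is a $\delta_n$-isometry with $\delta_n\to 0$.

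The technical heart consists of two elementary estimates for $W_p^{a,b}$. First, a \textbf{distortion estimate}: for $\mu,\nu\in\cM_p^C(X_n)$ one has $W_p^{a,b}(F_n\mu,F_n\nu)\le W_p^{a,b}(\mu,\nu)+\omega_n$ with $\omega_n\to 0$. To see this, take a near-optimal decomposition $\tilde\mu\le\mu$, $\tilde\nu\le\nu$ with $|\tilde\mu|=|\tilde\nu|$ realizing $W_p^{a,b}(\mu,\nu)$ and push it forward by $f_n$: the mass-discrepancy quantities $|\mu|-|\tilde\mu|$ and $|\nu|-|\tilde\nu|$ are exactly preserved (pushforward preserves total mass of nonnegative measures), while for an optimal plan $\pi$ between $\tilde\mu$ and $\tilde\nu$ the plan $(f_n\times f_n)_\sharp\pi$ yields
\[ W_p\big((f_n)_\sharp\tilde\mu,(f_n)_\sharp\tilde\nu\big)^p\le\int (d_n(x,x')+\varepsilon_n)^p\,d\pi\le W_p(\tilde\mu,\tilde\nu)^p+C\,p\,(D+\varepsilon_n)^{p-1}\varepsilon_n, \]
using convexity of $t\mapsto t^p$ on $[0,D+\varepsilon_n]$ and $|\pi|\le C$; feeding this back into the definition of $W_p^{a,b}$ bounds the increase by $\omega_n\to 0$, and the identical estimate holds for $g_n$ (with $c\varepsilon_n$ in place of $\varepsilon_n$). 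Second, a \textbf{stability estimate}: if a Borel map $h$ satisfies $d(h(y),y)\le\delta$ pointwise, then using $h$ itself as a transport map in the definition of $W_p^{a,b}$ gives $W_p^{a,b}(\sigma,h_\sharp\sigma)\le b\,\delta\,C^{1/p}$ for every $\sigma\in\cM_p^C$.

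With these in hand the proof assembles quickly. The distortion estimate applied to $f_n$ gives the upper bound $W_p^{a,b}(F_n\mu,F_n\nu)\le W_p^{a,b}(\mu,\nu)+\omega_n$. For the matching lower bound I would apply the distortion estimate to $g_n$ at the pair $(F_n\mu,F_n\nu)$ and the stability estimate to $g_nf_n\approx\mathrm{id}_{X_n}$, obtaining
\[ W_p^{a,b}(\mu,\nu)\le W_p^{a,b}\big((g_nf_n)_\sharp\mu,(g_nf_n)_\sharp\nu\big)+2cb\varepsilon_nC^{1/p}\le W_p^{a,b}(F_n\mu,F_n\nu)+\omega_n'+2cb\varepsilon_nC^{1/p}. \]
For the $\delta_n$-density of $F_n(\cM_p^C(X_n))$ in $\cM_p^C(X)$, given $\nu\in\cM_p^C(X)$ I would set $\mu:=(g_n)_\sharp\nu\in\cM_p^C(X_n)$, so that $F_n\mu=(f_ng_n)_\sharp\nu$ lies within $cb\varepsilon_nC^{1/p}$ of $\nu$ by the stability estimate applied to $f_ng_n\approx\mathrm{id}_X$. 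Combining, $F_n$ is a $\delta_n$-isometry with $\delta_n\to 0$, which is exactly the asserted convergence $(\cM_p^C(X_n),W_p^{a,b})\to(\cM_p^C(X),W_p^{a,b})$.

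I expect the main obstacle to be the uniform control of the error terms rather than the structure of the argument: one must keep the distortion introduced in the transport part $o(1)$, and this is precisely where the two standing hypotheses enter --- the uniform mass bound built into $\cM_p^C$ and the uniform diameter bound coming from boundedness of the limit $X$ --- so that $C\,p\,(D+\varepsilon_n)^{p-1}\varepsilon_n\to 0$. Some additional care is needed to choose the approximate inverses $g_n$ Borel measurable, so that the pushforwards $(g_n)_\sharp$ are well defined.
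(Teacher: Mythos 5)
Your proposal is correct and follows essentially the same route as the paper: the paper also reduces the theorem to a lemma showing that a measurable $\varepsilon$-Gromov--Hausdorff approximation $f$ induces an $\widetilde\varepsilon$-approximation $f_\sharp$ on $(\cM_p^C,W_p^{a,b})$, proved by pushing forward an optimal decomposition $(\widetilde\mu,\widetilde\nu)$ (mass discrepancies preserved, Wasserstein part controlled by a mean-value estimate on $t^p$ using the uniform mass and diameter bounds), then using a measurable approximate inverse $f'$ together with the estimate $W_p^{a,b}(\mu,f'_\sharp f_\sharp\mu)\lesssim b\,\varepsilon$ for both the reverse inequality and the approximate surjectivity. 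The only differences are cosmetic (your stability constant $C^{1/p}$ versus the paper's $C^{2/p}$, and the paper discharges the measurability of the approximate inverse by citing an existing lemma rather than constructing it).
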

\begin{theorem}\label{T-actions on generalized Wasserstein spaces}
Let $\{\alpha_n\}$ be a sequence of continuous actions of a topological group $G$ on bounded, Polish metric spaces $\left\{X_n\right\}$ and let $C>0 ,p\geq 1, a,b>0$. Let $\left(\alpha_n\right)_{\sharp}$ be the induced action of $\alpha_n$ on the space $(\cM_p^C\left(X_n\right),W_p^{a,b})$, for every $n\in\mathbb{N}$. If $\lim_{n\rightarrow \infty}d_{mGH}\left(\alpha_n,\alpha\right)=0$ for some continuous action $\alpha$ of $G$ on a bounded, Polish metric space $X$. We denote by $\alpha_{\sharp}$ the induced action of $\alpha$ on the space $(\cM_p^C\left(X_n\right),W_p^{a,b})$. Then $\lim_{n\rightarrow \infty}d_{mGH}\left(\left(\alpha_n\right)_{\sharp},\alpha_{\sharp}\right)=0$.
\end{theorem}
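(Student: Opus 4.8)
The plan is to lift, for each $\varepsilon>0$ and all sufficiently large $n$, an approximately equivariant $\varepsilon$-isometry $f_n\colon X_n\to X$ coming from the hypothesis $d_{mGH}(\alpha_n,\alpha)\to 0$ to an approximately equivariant approximate isometry $(f_n)_\sharp$ between the induced actions, and then to read off $d_{mGH}((\alpha_n)_\sharp,\alpha_\sharp)\to 0$. Recall that $d_{mGH}(\alpha_n,\alpha)<\varepsilon$ furnishes an $\varepsilon$-isometric, $\varepsilon$-dense, $\varepsilon$-equivariant correspondence, realized (after choosing a Borel section) by maps $f_n\colon X_n\to X$ and an approximate inverse $g_n\colon X\to X_n$ with $\sup_{x,h} d\big(f_n(\alpha_n(h,x)),\alpha(h,f_n(x))\big)\le\varepsilon$ uniformly over the relevant set of group elements, and similarly for $g_n$. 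Since the induced action of $h\in G$ on $\mu$ is the pushforward $(\alpha_{n,h})_\sharp\mu$ by the isometry $\alpha_{n,h}=\alpha_n(h,\cdot)$, and pushforward by an isometry preserves the total mass and (the base spaces being bounded) the $p$-th moment, the induced actions $(\alpha_n)_\sharp$ and $\alpha_\sharp$ are genuine $G$-actions on the bounded spaces $(\cM_p^C(X_n),W_p^{a,b})$ and $(\cM_p^C(X),W_p^{a,b})$ by $W_p^{a,b}$-isometries.

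For the metric (non-equivariant) part I would reuse the estimates established in the proof of Theorem~\ref{T-convergence of generalized Wasserstein spaces}: a Gromov–Hausdorff $\varepsilon$-approximation $f_n$ of the base spaces induces a $\delta_1(\varepsilon)$-isometry $(f_n)_\sharp\colon(\cM_p^C(X_n),W_p^{a,b})\to(\cM_p^C(X),W_p^{a,b})$ whose image is $\delta_1(\varepsilon)$-dense, with $\delta_1(\varepsilon)\to 0$ as $\varepsilon\to 0$ (pushforward preserves total mass, so $(f_n)_\sharp$ indeed maps $\cM_p^C(X_n)$ into $\cM_p^C(X)$); the same applies to $(g_n)_\sharp$. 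Thus it remains only to promote this to equivariance.

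The crux is the approximate equivariance of $(f_n)_\sharp$. Fix $h\in G$ and $\mu\in\cM_p^C(X_n)$. By functoriality of pushforward,
\begin{align*}
(f_n)_\sharp\big((\alpha_{n,h})_\sharp\mu\big)=(f_n\circ\alpha_{n,h})_\sharp\mu,\qquad (\alpha_h)_\sharp\big((f_n)_\sharp\mu\big)=(\alpha_h\circ f_n)_\sharp\mu,
\end{align*}
so I must compare the pushforwards of $\mu$ under the two maps $f_n\circ\alpha_{n,h}$ and $\alpha_h\circ f_n$ from $X_n$ to $X$; by $\varepsilon$-equivariance of $f_n$ these are pointwise $\varepsilon$-close, uniformly in $x$ and in $h$. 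I would then invoke the following stability estimate for $W_p^{a,b}$: if $u,v\colon X_n\to X$ satisfy $\sup_x d(u(x),v(x))\le\varepsilon$ and $|\mu|\le C$, then, choosing in the definition of $W_p^{a,b}$ the full matching furnished by the coupling $(u,v)_\sharp\mu$ of $u_\sharp\mu$ and $v_\sharp\mu$ (no mass is created or destroyed, so the $a$-term vanishes),
$$W_p^{a,b}\big(u_\sharp\mu,v_\sharp\mu\big)\le b\Big(\int_{X_n}d(u(x),v(x))^p\,d\mu(x)\Big)^{1/p}\le b\,C^{1/p}\,\varepsilon.$$
Hence $W_p^{a,b}\big((f_n)_\sharp(\alpha_{n,h})_\sharp\mu,(\alpha_h)_\sharp(f_n)_\sharp\mu\big)\le b\,C^{1/p}\varepsilon$ for every $h\in G$ and every $\mu\in\cM_p^C(X_n)$, which is exactly $\varepsilon$-equivariance of $(f_n)_\sharp$ with a defect independent of $h$.

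Combining the two parts, $(f_n)_\sharp$ together with $(g_n)_\sharp$ (equivalently, the induced pushforward correspondence) is a $\delta(\varepsilon)$-equivariant $\delta(\varepsilon)$-isometry between $(\alpha_n)_\sharp$ and $\alpha_\sharp$ with $\delta(\varepsilon)=\max\{\delta_1(\varepsilon),\,b\,C^{1/p}\varepsilon\}\to 0$, whence $d_{mGH}((\alpha_n)_\sharp,\alpha_\sharp)\to 0$. I expect the genuine difficulty to lie not in the estimates themselves—the boundedness of the base spaces and the uniform mass bound $C$ keep every constant uniform, and the equivariance defect $b\,C^{1/p}\varepsilon$ is already independent of the group variable—but in matching the precise definition of $d_{mGH}$ for a possibly non-compact topological group $G$: one must check that the induced pushforward correspondence is $\delta(\varepsilon)$-dense and that the equivariance defect is controlled over the exhausting family of group elements used in that definition, and that $(g_n)_\sharp$ is built compatibly so as to close up the correspondence in both directions.
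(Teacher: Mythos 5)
Your proposal is correct and follows essentially the same route as the paper: the metric part is delegated to the lemma underlying Theorem \ref{T-convergence of generalized Wasserstein spaces}, and the equivariance defect of $(f_n)_\sharp$ is controlled by coupling $u_\sharp\mu$ with $v_\sharp\mu$ via $(u,v)_\sharp\mu$ for the pointwise $\varepsilon$-close maps $u=f_n\circ\alpha_{n,h}$ and $v=\alpha_h\circ f_n$, noting the $a$-term vanishes since the masses agree (the paper gets $bC^{2/p}\varepsilon$ rather than your $bC^{1/p}\varepsilon$, an immaterial normalization difference). Your closing worry about an exhausting family of group elements is moot here, since the paper's definition of a (measurable) GH approximation between actions already demands the bound $d_{\sup}(f\circ\alpha_g,\beta_g\circ f)\leq\varepsilon$ uniformly over all $g\in G$.
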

The paper is organized as following. In section 2, we review generalized Wasserstein distances and equivariant Gromov-Hausdorff distance. In section 3, we will prove theorem \ref{T-convergence of generalized Wasserstein spaces} and theorem \ref{T-actions on generalized Wasserstein spaces}. Theorem \ref{T-isometry} will be proved in section 4. Finally, in section 5 we will prove theorem \ref{T-duality of generalized Wasserstein spaces}, theorem \ref{T-flat metrics} and corollary \ref{C-geodesic space}. Furthermore, in this last section we also present some interesting consequences of theorem \ref{T-flat metrics}, and several results of optimal plans and dual optimal for $W_1^{a,b}$. 

\textbf{Acknowledgements:} Part of this paper was carried out when N. P. Chung visited University of Science, Vietnam National University at Hochiminh city on January 2019. He is grateful to Dang Duc Trong for his warm hospitality. The authors were partially supported by the National Research Foundation of Korea (NRF) grants funded by the Korea government (No. NRF- 2016R1A5A1008055 , No. NRF-2016R1D1A1B03931922 and No. NRF-2019R1C1C1007107). We also thank Benedetto Piccoli and Francesco Rossi for pointing \cite{Hanin1} out to us. 
\section{Preliminaries}

\subsection{Notations, Wasserstein spaces and generalized Wasserstein spaces}
\hfill

First, we review notations we use in the paper and recall the definitions of Wasserstein distances and some of their properties. For more details, readers can see \cite{V03,V09}. 

Let $(X,d) $ be a metric space. We denote by $\mathcal{M}(X)$ and $\mathcal{P}(X)$ the sets of all nonnegative Borel measures with finite mass and all probability Borel measures, respectively.

Given a Borel measure $\mu$, we denote its mass by $\vert \mu\vert :=\mu (X)$. A set $M\subset \mathcal{M}(X)$ is bounded if $\sup_{\mu\in M}\vert \mu\vert <\infty$, and it is \textit{tight} if for every $\varepsilon>0$, there exists a compact subset $K_\varepsilon$ of $X$ such that for all $\mu\in M$, we have $\mu\left(X\backslash K_\varepsilon\right)\leq \varepsilon$.

For every $\mu,\nu\in \mathcal{M}(X)$, we say that $\mu$ is absolutely continuous with respect to $\nu$ and write $\mu \ll \nu$ if $\nu(A)=0$ yields $\mu(A)=0$ for every Borel subset $A$ of $X$. We call that $\mu$ and $\nu$ are mutually singular and write $\mu \perp \nu$ if there exists a Borel subset $B$ of $X$ such that $\mu(B)=\nu(X\backslash B)=0$. We write $\mu\leq \nu$ if for all Borel subset $A$ of $X$ we have $\mu(A)\leq \nu(A)$.
\begin{theorem} (Prokhorov's theorem)
Let $(X,d)$ be a metric space. If a subset $M\subset \cM(X)$ is bounded and \textit{tight} then $M$ is relatively compact under the weak*- topology. 
\end{theorem}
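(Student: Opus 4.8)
The plan is to derive the stated relative compactness from \emph{sequential} compactness, so I would first arrange the topological reduction and then extract a convergent subsequence by a compact-exhaustion and diagonal argument. By tightness, for each $k\in\N$ choose a compact $K_k\subset X$ with $\mu(X\setminus K_k)\le 1/k$ for every $\mu\in M$; replacing $K_k$ by $K_1\cup\cdots\cup K_k$ I may take the $K_k$ increasing. Then every $\mu\in M$ is concentrated on the $\sigma$-compact, hence separable, set $X_0:=\bigcup_k K_k$, and the same will hold for any weak$^*$ limit. On the separable metric space $\overline{X_0}$ the weak$^*$ topology is metrizable on norm-bounded subsets of $\cM(\overline{X_0})$ (e.g.\ by a bounded-Lipschitz distance), so relative compactness reduces to showing that an arbitrary sequence $\{\mu_n\}\subset M$ has a weak$^*$ convergent subsequence.

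To produce the subsequence I would work compact set by compact set. Restricting to $K_1$, the measures $\mu_n|_{K_1}$ form a family of total mass $\le C:=\sup_{\mu\in M}|\mu|$ in $\cM(K_1)\cong C(K_1)^*$ (Riesz representation). Since $K_1$ is compact metric, $C(K_1)$ is separable, so by Banach--Alaoglu the ball of radius $C$ is weak$^*$ compact and metrizable, hence sequentially compact; extract a subsequence along which $\mu_n|_{K_1}$ converges weak$^*$ to some $\lambda_1\in\cM(K_1)$. Repeating on $K_2,K_3,\dots$ and diagonalizing yields one subsequence $\{\mu_{n_j}\}$ such that $\mu_{n_j}|_{K_k}\to\lambda_k$ weak$^*$ in $\cM(K_k)$ for every $k$.

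The heart of the argument is to glue the local limits $\lambda_k$ into a single measure. Viewing each $\lambda_k$ as a measure on $X$ (extended by zero off $K_k$) and testing against nonnegative $f\in C_b(X)$, the inclusion $K_k\subset K_{k+1}$ gives $\int_{K_k} f\,d\mu_{n_j}\le \int_{K_{k+1}} f\,d\mu_{n_j}$; letting $j\to\infty$ yields $\int f\,d\lambda_k\le\int f\,d\lambda_{k+1}$, so $\lambda_k\le\lambda_{k+1}$ as Borel measures on $X$. Hence $\mu:=\lim_k\lambda_k$ is a well-defined Borel measure with $|\mu|=\lim_k\lambda_k(K_k)\le C<\infty$. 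To check $\mu_{n_j}\to\mu$ weak$^*$, fix $f\in C_b(X)$ and split
\begin{equation*}
\Big|\int_X f\,d\mu_{n_j}-\int_X f\,d\mu\Big|\le \Big|\int_{X\setminus K_k} f\,d\mu_{n_j}\Big|+\Big|\int_{K_k} f\,d\mu_{n_j}-\int f\,d\lambda_k\Big|+\Big|\int f\,d(\lambda_k-\mu)\Big|.
\end{equation*}
The first term is $\le \|f\|_\infty/k$ by the uniform tail bound, the second tends to $0$ as $j\to\infty$ by weak$^*$ convergence on $K_k$, and the third is $\le\|f\|_\infty\,(|\mu|-\lambda_k(X))\to0$ as $k\to\infty$; choosing $k$ then $j$ large makes the left side arbitrarily small.

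I expect the main obstacle to be precisely this gluing step: one must rule out escape of mass and confirm that the patched measure $\mu$ reproduces the weak$^*$ limit \emph{uniformly} in the test function, and it is exactly the uniform tail control provided by tightness (the first term above, bounded by $\|f\|_\infty/k$ independently of $j$) that makes the three-term estimate close. A secondary technical point is the metrizability reduction for a possibly non-separable $X$, which tightness again resolves by confining all measures to the separable set $\overline{X_0}$.
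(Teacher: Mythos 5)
The paper offers no proof of this statement: Prokhorov's theorem is quoted in the preliminaries as a classical background fact, so there is nothing in the paper to compare your argument against line by line. Your proof is the standard compact-exhaustion and diagonalization argument, and it is essentially correct. A few points deserve explicit care but are not gaps. First, in the reduction to sequential compactness you should note that every weak\(^*\) limit point of \(M\) is also concentrated on \(\overline{X_0}\): this follows from the portmanteau inequality \(\mu(X\setminus K_k)\leq\liminf_\alpha\mu_\alpha(X\setminus K_k)\leq 1/k\) for the open sets \(X\setminus K_k\), which for non-probability measures requires the convergence of total masses, automatic here since the constant function \(1\) lies in \(C_b(X)\); only then does metrizability of the weak\(^*\) topology on bounded sets of measures carried by a separable set turn relative compactness into sequential compactness. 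Second, your gluing step is correctly set up: you do not claim the consistency \(\lambda_{k+1}|_{K_k}=\lambda_k\) (which can genuinely fail, since mass may accumulate on \(\partial K_k\)) but only the monotonicity \(\lambda_k\leq\lambda_{k+1}\), and that is all the three-term estimate needs. Third, passing from \(\int f\,d\lambda_k\leq\int f\,d\lambda_{k+1}\) for all nonnegative \(f\in C_b(X)\) to \(\lambda_k\leq\lambda_{k+1}\) as Borel measures is legitimate for finite Borel measures on a metric space by approximating indicators of closed sets from above by bounded continuous functions and using outer regularity. With those remarks made explicit, the argument is complete and proves exactly the statement as the paper uses it.
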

 
For every $p\geq 1$, we denote by $\mathcal{M}_p(X)$ (reps. $\mathcal{P}_p (X)$) the space of all measures $\mu\in  \mathcal{M}(X)$ (reps. $\mathcal{P}(X)$) with finite $p$-moment, i.e. there is some (and therefore any) $x_0\in X$ such that $$\int_{X}d^p\left(x,x_0\right)d\mu(x)<\infty.$$

For every measures $\mu,\nu\in \mathcal{M}(X)$, a Borel probability measure $\pi$ on $X\times X$ is called a transference plan between $\mu$ and $\nu$ if $$\vert \mu\vert \pi (A\times X)=\mu(A)\text{ and }\vert \nu\vert \pi (X\times B)=\nu(B),$$
for every Borel subsets $A,B$ of $X$. We denote the set of all transference plan between $\mu$ and $\nu$ by $\Pi(\mu,\nu)$.

 Given measures $\mu,\nu\in\mathcal{M}_p(X)$ with the same mass, i.e. $\vert \mu\vert=\vert \nu\vert$. The Wasserstein distance between $\mu$ and $\nu$ is defined by $$W_p(\mu,\nu):=\left(\vert \mu\vert\inf_{\pi\in \Pi (\mu,\nu)}J_p (\pi)\right)^{1/p},$$
where $J_p(\pi)=\int_{X\times X}d^p(x,y)d\pi(x,y)$. For each $\mu,\nu\in\mathcal{M}(X)$ with $|\mu|=|\nu|$, we denote by $\Opt_p(\mu,\nu)$ the set of all $\pi\in\Pi(\mu,\nu)$ such that $W^p_p(\mu,\nu)=\vert \mu\vert J_p(\pi)$. If $(X,d)$ is a Polish metric space, i.e. $(X,d)$ is complete and separable then $\Opt_p(\mu,\nu)$ is nonempty. This result follows from \cite[Theorem 1.3]{V03} by setting $\mu^*=\mu/\vert \mu\vert,\nu^*=\nu/\vert \nu\vert\in \mathcal{P}_p(X)$.

Let $\left(X,d_X\right)$ and $\left(Y,d_Y\right)$ be metric spaces and $f:X\rightarrow Y$ be a Borel map. We denote by $f_{\sharp}\mu\in\mathcal{M}(Y)$ the push-forward measure defined by $$f_{\sharp}\mu(B):=\mu\left(f^{-1}(B)\right),$$
for every Borel subset $B$ of $Y$.

We now review the definitions of the generalized Wasserstein distances introduced by Piccoli and Rossi in \cite{PR14}. Note that although in \cite{PR14} the authors only presented for the case $X=\R^d$ their methods work for a general Polish metric space $X$. 
\begin{df}
Let $X$ be a Polish metric space and let $a,b>0,p\geq 1$. For every $\mu,\nu\in \mathcal{M}(X)$, the generalized Wasserstein distance $W^{a,b}_p$ between $\mu$ and $\nu$ is defined by \begin{align*}
W^{a,b}_p (\mu,\nu) := \inf\limits_{\begin{matrix}
\widetilde{\mu},\widetilde{\nu}\in\mathcal{M}_p(X) \\ \vert \widetilde{\mu}\vert =\vert \widetilde{\nu}\vert
\end{matrix}} C\left(\widetilde{\mu}, \widetilde{\nu}\right),
\end{align*}
where $ C\left(\widetilde{\mu}, \widetilde{\nu}\right)= a\left\vert \mu-\widetilde{\mu}\right\vert+a\left\vert \nu-\widetilde{\nu}\right\vert+b\,W_p\left(\widetilde{\mu},\widetilde{\nu}\right).$
\end{df}
\begin{prop}
(\cite[Proposition 1]{PR14})
\label{P-less measures for general Wassertein spaces}
If $X$ is a Polish metric space then $\left(\mathcal{M}(X), W^{a,b}_p\right)$ is a metric space. Moreover, there exists $\widetilde{\mu},\widetilde{\nu}\in \mathcal{M}_p(X)$ such that $\vert \widetilde{\mu}\vert = \vert \widetilde{\nu}\vert, \widetilde{\mu}\leq \mu, \widetilde{\nu}\leq \nu$ and $W^{a,b}_p (\mu,\nu) =  C\left(\widetilde{\mu}, \widetilde{\nu}\right)$.
\end{prop}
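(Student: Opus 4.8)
The plan is to verify the metric axioms in increasing order of difficulty and to extract the optimal sub-measures by the direct method of the calculus of variations. Symmetry, non-negativity, finiteness (take $\widetilde\mu=\widetilde\nu=0$, giving $C(0,0)=a|\mu|+a|\nu|$) and $W^{a,b}_p(\mu,\mu)=0$ (take $\widetilde\mu=\widetilde\nu=\mu$) are immediate from the definition. The first genuine step is a \emph{trimming lemma}: the infimum defining $W^{a,b}_p$ is unchanged if one restricts to $\widetilde\mu\leq\mu$, $\widetilde\nu\leq\nu$. Given any competitor $(\widetilde\mu,\widetilde\nu)$ with $|\widetilde\mu|=|\widetilde\nu|$, I would remove the excess $(\widetilde\mu-\mu)_+=\widetilde\mu-\widetilde\mu\wedge\mu$: fixing an (unnormalised) optimal plan $\gamma$ for $(\widetilde\mu,\widetilde\nu)$, I discard the sub-plan of $\gamma$ whose first marginal equals this excess, whose second marginal $f\leq\widetilde\nu$ satisfies $|f|=|(\widetilde\mu-\mu)_+|$. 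The new pair $(\widetilde\mu\wedge\mu,\widetilde\nu-f)$ has equal masses, still lies in $\cM_p(X)$ (being dominated by $(\widetilde\mu,\widetilde\nu)$), and its cost does not increase: the $\mu$-term drops by exactly $a|(\widetilde\mu-\mu)_+|$, the $\nu$-term rises by at most $a|f|$, and the transport term can only decrease since we retained a sub-plan. Symmetrically removing the excess over $\nu$ (which deletes further mass from the first measure, preserving $\leq\mu$) yields sub-measures as claimed.

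With the problem reduced to sub-measures, existence follows by compactness. Along a minimising sequence $(\widetilde\mu_n,\widetilde\nu_n)$ with $\widetilde\mu_n\leq\mu$, $\widetilde\nu_n\leq\nu$, the domination makes $\{\widetilde\mu_n\}$, $\{\widetilde\nu_n\}$ tight and bounded (the finite measures $\mu,\nu$ are tight on the Polish space $X$), so Prokhorov's theorem yields a subsequence converging in the weak*-topology to limits $\widetilde\mu\leq\mu$, $\widetilde\nu\leq\nu$. Tightness prevents mass from escaping, so testing against the constant $1\in C_b(X)$ gives $|\widetilde\mu_n|\to|\widetilde\mu|$ and $|\widetilde\nu_n|\to|\widetilde\nu|$; hence $|\widetilde\mu|=|\widetilde\nu|$ and each $a$-term is continuous on sub-measures. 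As $W_p$ is lower semicontinuous for weak* convergence, so is $C$, whence $C(\widetilde\mu,\widetilde\nu)\leq\liminf_n C(\widetilde\mu_n,\widetilde\nu_n)=W^{a,b}_p(\mu,\nu)$; finiteness of the limiting $p$-moment is inherited from the uniformly bounded cost via this same semicontinuity. Thus $(\widetilde\mu,\widetilde\nu)$ is the asserted minimiser, proving the ``moreover'' part.

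The minimiser immediately gives positivity: if $W^{a,b}_p(\mu,\nu)=0$, then $C(\widetilde\mu,\widetilde\nu)=0$ forces $|\mu-\widetilde\mu|=|\nu-\widetilde\nu|=0$ and $W_p(\widetilde\mu,\widetilde\nu)=0$, so $\mu=\widetilde\mu=\widetilde\nu=\nu$. The remaining and hardest step is the triangle inequality, which I would prove by gluing optimal sub-measure plans. Choose optimal sub-measures $\widetilde\mu\leq\mu$, $\widetilde\nu^{1}\leq\nu$ for $(\mu,\nu)$ and $\widetilde\nu^{2}\leq\nu$, $\widetilde\sigma\leq\sigma$ for $(\nu,\sigma)$, set $\rho:=\widetilde\nu^{1}\wedge\widetilde\nu^{2}$, restrict the two optimal plans to the sub-plans whose $\nu$-side marginal is $\rho$, and glue them over the common marginal $\rho$ to obtain a plan between sub-measures $\hat\mu\leq\widetilde\mu$ and $\hat\sigma\leq\widetilde\sigma$. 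Minkowski's inequality along the three-variable gluing gives $bW_p(\hat\mu,\hat\sigma)\leq bW_p(\widetilde\mu,\widetilde\nu^{1})+bW_p(\widetilde\nu^{2},\widetilde\sigma)$, while the creation/destruction terms reduce, using $\hat\mu\leq\mu$, $\hat\sigma\leq\sigma$ and additivity of mass on sub-measures, to the single inequality $|\widetilde\nu^{1}|+|\widetilde\nu^{2}|-|\rho|\leq|\nu|$; this is exactly $|\widetilde\nu^{1}\vee\widetilde\nu^{2}|\leq|\nu|$, which holds since $\widetilde\nu^{1}\vee\widetilde\nu^{2}\leq\nu$. Assembling these estimates bounds $C(\hat\mu,\hat\sigma)$ by $W^{a,b}_p(\mu,\nu)+W^{a,b}_p(\nu,\sigma)$, and as $(\hat\mu,\hat\sigma)$ is a competitor for $(\mu,\sigma)$ the triangle inequality follows. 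I expect the main obstacle to be this last step: organising the gluing so that the lattice operations $\wedge,\vee$ on the two copies of $\nu$ exactly absorb the mismatch between $\widetilde\nu^{1}$ and $\widetilde\nu^{2}$ — which is precisely why the sub-measure normalisation established first is indispensable.
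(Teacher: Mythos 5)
The paper itself gives no proof of this proposition: it is imported from \cite{PR14}*{Proposition 1} with the remark that the argument for $X=\R^d$ carries over to Polish spaces. So the comparison is with that standard argument rather than with anything in this paper. Your architecture --- a trimming lemma reducing to sub-measures, the direct method via Prokhorov plus lower semicontinuity of $W_p$, and a gluing argument for the triangle inequality --- is exactly the standard route. The trimming computation is right (the $\mu$-term drops by exactly $a\vert f\vert$, the $\nu$-term rises by at most $a\vert f\vert$, the transport term decreases), and your reduction of the mass bookkeeping in the triangle inequality to $\vert\widetilde\nu^{1}\vee\widetilde\nu^{2}\vert\leq\vert\nu\vert$ checks out.

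There is one step that does not hold as written: the claim that ``finiteness of the limiting $p$-moment is inherited from the uniformly bounded cost via this same semicontinuity.'' Nothing in the cost $C$ controls $\int_X d(x,x_0)^p\,d\widetilde\mu(x)$: the transport term only bounds the relative quantity $\int_{X\times X} d(x,y)^p\,d\gamma(x,y)$, which can be zero while both marginals have infinite $p$-moment (a plan concentrated on the diagonal), and the $a$-terms see only masses. Concretely, take $X=\R$ and $\mu=\nu=\sum_n c_n\delta_n$ with $\sum_n c_n<\infty$ but $\sum_n n^pc_n=\infty$: your trimmed minimising sequence (restrictions of $\mu$ to compact sets) converges weakly* to $\mu\notin\mathcal{M}_p(X)$, the infimum is $0$, and no pair in $\mathcal{M}_p(X)\times\mathcal{M}_p(X)$ attains it. So the compactness step, and indeed the ``moreover'' clause with the admissible class taken to be $\mathcal{M}_p(X)$ as in this paper's definition, only goes through if one either assumes $\mu,\nu\in\mathcal{M}_p(X)$ (in which case $\widetilde\mu\leq\mu$ settles the moment question trivially) or enlarges the admissible class to pairs of equal-mass finite measures admitting a coupling with finite $p$-cost, which is in effect the setting of \cite{PR14}. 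You should make this hypothesis explicit rather than assert the inheritance; everything else in your argument, including the lattice bookkeeping in the gluing step, is sound.
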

If measures $\widetilde{\mu}, \widetilde{\nu}\in \mathcal{M}_p(X)$ with the same mass such that $W^{a,b}_p (\mu,\nu) =  C\left(\widetilde{\mu}, \widetilde{\nu}\right)$ then we say that $\left(\widetilde{\mu}, \widetilde{\nu}\right)$ is an optimal for $W^{a,b}_p (\mu,\nu)$.  
\begin{prop}\label{P-completeness}
(\cite[Proposition 4]{PR14}) If $(X,d)$ is a Polish metric space then $\left(\mathcal{M}(X),W^{a,b}_p\right)$ is a complete metric space.
\end{prop}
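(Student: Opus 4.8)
The plan is the classical three-step scheme for proving completeness of a space of measures: show that a Cauchy sequence has bounded mass and is tight, extract a weak$^*$-convergent subsequence by Prokhorov's theorem, and identify its weak$^*$-limit as the $W_p^{a,b}$-limit by means of the weak$^*$ lower semicontinuity of $W_p^{a,b}$. Throughout I would systematically invoke Proposition \ref{P-less measures for general Wassertein spaces} to replace any pair by a (near-)optimal decomposition $\widetilde{\mu}\le\mu$, $\widetilde{\nu}\le\nu$ with $|\widetilde{\mu}|=|\widetilde{\nu}|$ realizing $W_p^{a,b}(\mu,\nu)=a|\mu-\widetilde{\mu}|+a|\nu-\widetilde{\nu}|+b\,W_p(\widetilde{\mu},\widetilde{\nu})$. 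The mass bound comes first and is immediate: since $\widetilde{\mu}\le\mu$ and $\widetilde{\nu}\le\nu$ give $|\mu-\widetilde{\mu}|=|\mu|-|\widetilde{\mu}|$ and $|\nu-\widetilde{\nu}|=|\nu|-|\widetilde{\nu}|$ with $|\widetilde{\mu}|=|\widetilde{\nu}|$, one obtains $\big||\mu|-|\nu|\big|\le \tfrac1a\,W_p^{a,b}(\mu,\nu)$; hence for a Cauchy sequence $\{\mu_n\}$ the masses $|\mu_n|$ form a Cauchy, and in particular bounded, sequence of reals.

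The main obstacle is tightness, which I would establish in its total-boundedness formulation (equivalent to tightness on a Polish space): for all $\varepsilon,\delta>0$ there are finitely many $\delta$-balls covering every $\mu_n$ up to mass $\varepsilon$. Fix $\varepsilon,\delta$ and, using the Cauchy property, choose $N$ with $W_p^{a,b}(\mu_n,\mu_N)<\delta_0$ for all $n\ge N$, where $\delta_0$ is to be fixed later. Since $\mu_N$ is a finite Borel measure on a Polish space it is tight, so there is a compact $K$ with $\mu_N(X\setminus K)<\varepsilon/4$; cover $K$ by balls $B(x_1,\delta/2),\dots,B(x_M,\delta/2)$ and set $U=\bigcup_i B(x_i,\delta)$, so that $x\notin U$ forces $d(x,K)>\delta/2$. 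Taking a near-optimal decomposition $\widetilde{\mu}_n\le\mu_n$, $\widetilde{\mu}_N\le\mu_N$ of $W_p^{a,b}(\mu_n,\mu_N)$ together with an optimal plan $\Pi$ between $\widetilde{\mu}_n$ and $\widetilde{\mu}_N$, I would split $\widetilde{\mu}_n(X\setminus U)=\Pi\big((X\setminus U)\times K\big)+\Pi\big((X\setminus U)\times(X\setminus K)\big)$. The second term is at most $\widetilde{\mu}_N(X\setminus K)\le\mu_N(X\setminus K)<\varepsilon/4$, while on the first region $d(x,y)>\delta/2$, so a Chebyshev estimate bounds it by $(\delta/2)^{-p}\,W_p(\widetilde{\mu}_n,\widetilde{\mu}_N)^p\le(\delta/2)^{-p}(\delta_0/b)^p$. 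Choosing $\delta_0$ small (depending on $\varepsilon,\delta,a,b$) makes this and $|\mu_n-\widetilde{\mu}_n|\le\delta_0/a$ each less than $\varepsilon/4$, whence $\mu_n(X\setminus U)<\varepsilon$ for all $n\ge N$; enlarging $U$ by finitely many further balls to absorb the finitely many (individually tight) measures $\mu_1,\dots,\mu_{N-1}$ completes the uniform cover. The delicate point here is that one must work with finite ball covers rather than enlarged compact neighborhoods, which is exactly what circumvents the failure of closed bounded sets to be compact in a general Polish space.

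With bounded mass and tightness secured, Prokhorov's theorem yields a subsequence $\mu_{n_k}$ converging weak$^*$ to some $\mu_\infty$, which has finite mass and so lies in $\cM(X)$. The remaining ingredient is the weak$^*$ lower semicontinuity of $W_p^{a,b}$. Given $\alpha_k\rightharpoonup\alpha$ and $\beta_k\rightharpoonup\beta$ weak$^*$, I would take optimal decompositions $\widetilde{\alpha}_k\le\alpha_k$, $\widetilde{\beta}_k\le\beta_k$; these are bounded and tight, being dominated by such families, so up to a subsequence they converge weak$^*$ to limits $\widetilde{\alpha}\le\alpha$ and $\widetilde{\beta}\le\beta$ (the domination passes to the limit because the weak$^*$-limits of the nonnegative measures $\alpha_k-\widetilde{\alpha}_k$ and $\beta_k-\widetilde{\beta}_k$ are nonnegative). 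Tightness forces the masses to converge, so $|\widetilde{\alpha}|=|\widetilde{\beta}|$ and $|\alpha_k-\widetilde{\alpha}_k|\to|\alpha-\widetilde{\alpha}|$, $|\beta_k-\widetilde{\beta}_k|\to|\beta-\widetilde{\beta}|$; the transport term is handled by the standard weak$^*$ lower semicontinuity of $W_p$. Summing the three contributions gives $W_p^{a,b}(\alpha,\beta)\le\liminf_k W_p^{a,b}(\alpha_k,\beta_k)$.

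Finally, I would combine lower semicontinuity with the Cauchy property. Applying the estimate above along the weak$^*$-convergent subsequence, for each fixed $m$ one has $W_p^{a,b}(\mu_m,\mu_\infty)\le\liminf_k W_p^{a,b}(\mu_m,\mu_{n_k})$, and the right-hand side is made arbitrarily small for large $m$ by the Cauchy condition. Hence the entire sequence $\{\mu_n\}$ converges to $\mu_\infty$ in $W_p^{a,b}$, which proves that $\left(\cM(X),W_p^{a,b}\right)$ is complete.
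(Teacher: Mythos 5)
The paper offers no proof of this proposition: it is quoted directly from \cite[Proposition 4]{PR14}, with the remark in Section 2 that Piccoli and Rossi's arguments, written for $X=\R^d$, carry over to a general Polish space. Their proof follows the same classical three-step scheme you adopt (mass bound, uniform tightness of the Cauchy sequence, Prokhorov, identification of the weak* limit --- they identify it via their characterization of $W_p^{a,b}$-convergence, you via lower semicontinuity). Your mass estimate is correct, and your tightness argument --- the finite-ball-cover formulation plus a Chebyshev bound on a (near-)optimal plan, chosen precisely because $\delta$-neighborhoods of compact sets need not be precompact in a Polish space --- is correct and well executed.

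There is, however, one genuine gap, in the lower-semicontinuity step. You conclude by ``summing the three contributions'' that $W_p^{a,b}(\alpha,\beta)\le a|\alpha-\widetilde{\alpha}|+a|\beta-\widetilde{\beta}|+b\,W_p(\widetilde{\alpha},\widetilde{\beta})$, but this is not an instance of the definition of $W_p^{a,b}$: the infimum there runs only over pairs in $\mathcal{M}_p(X)$ with equal mass, and your weak* limits $\widetilde{\alpha}\le\alpha$, $\widetilde{\beta}\le\beta$ need not have finite $p$-moment, since in the completeness argument $\alpha=\mu_m$ and $\beta=\mu_\infty$ are only elements of $\cM(X)$. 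Concretely, on $X=\R$ the measures $\widetilde{\alpha}_k=\sum_{j\le k}2^{-j}\delta_{2^j}\in\cM_p(X)$ converge narrowly to $\sum_{j\ge 1}2^{-j}\delta_{2^j}\notin\cM_p(X)$, so membership in $\cM_p$ is not preserved in the limit; nor does finiteness of the transport cost between $\widetilde{\alpha}$ and $\widetilde{\beta}$ rescue admissibility, as a diagonal plan has zero cost regardless of the moments of its marginals. The gap is repairable by a truncation: take a plan $\gamma$ between $\widetilde{\alpha}$ and $\widetilde{\beta}$ with cost arbitrarily close to $W_p(\widetilde{\alpha},\widetilde{\beta})^p$, restrict it to $B_R\times B_R$ where $B_R=\{x\in X: d(x,x_0)\le R\}$; the restricted marginals are dominated by $\alpha$ and $\beta$, have equal mass, and lie in $\cM_p(X)$ because they have bounded support, hence are admissible competitors; letting $R\to\infty$, the extra total-variation terms $a\bigl(|\widetilde{\alpha}|-\gamma(B_R\times B_R)\bigr)+a\bigl(|\widetilde{\beta}|-\gamma(B_R\times B_R)\bigr)$ vanish while the transport term only decreases, giving the desired inequality. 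In effect this proves, for general $p\ge 1$, the analogue of Lemma \ref{L-duality for generalized Wassertein spaces} and Remark \ref{R-duality for generalized Wassertein spaces}, which the paper states only for $p=1$. With this patch your proof is complete.
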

\subsection{Equivariant Gromov-Hausdorff distances for group actions}
\hfill

First, we recall the definition of Gromov-Hausdorff distance between two metric spaces. For more details, see standard references \cite{Burago,Shioya}.

Let $(X,d)$ be a metric space. For every $\varepsilon>0$, the $\varepsilon$-neighborhood of a subset $S$ of $X$, denoted by $B_\varepsilon (S)$, is defined as $B_\varepsilon (S)=\bigcup_{x\in S}B_\varepsilon (x)$.
\begin{df}
Let $X$ and $Y$ be subsets of a metric space $(Z,d)$. The Hausdorff distance between $X$ and $Y$, denoted by $d_H(X,Y)$ is defined as follow $$d_H(X,Y):=\inf\left\lbrace \varepsilon>0\mid X\subset B_\varepsilon (Y)\text{ and }Y\subset B_\varepsilon (X)\right\rbrace .$$
\end{df}
\begin{df}
Let $X$ and $Y$ be metric spaces. The Gromov-Hausdorff distance $d_{GH}(X,Y)$ is the infimum of $r>0$ such that there exist a metric space $(Z,d)$ and subspaces $X'$ and $Y'$ of $Z$ which are isometric to $X$ and $Y$ respectively such that $d_H(X',Y')<r$.
\end{df}
\begin{df}
Given two bounded metric spaces $\left(X_1, d_1\right), \left(X_2, d_2\right)$. An $\varepsilon$-Gromov-Hausdorff approximation from $X_1$ to $X_2$ is a map $f: X_1\rightarrow X_2$ such that
\begin{enumerate}[(i)]
\item For every $x_1, x_1'\in X_1$ then $\left\vert d_2\left(f(x_1), f(x_1')\right)-d_1\left( x_1, x_1'\right)\right\vert\leq \varepsilon$
\item For every $x_2\in X_2$, there exists $x_1\in X_1$ such that $d_2 \left( f(x_1), x_2\right)\leq \varepsilon$.
\end{enumerate}
\end{df}
If $f$ is an $\varepsilon$-Gromov-Hausdorff approximation from $X_1$ to $X_2$ then it has an approximate inverse $f': X_2\rightarrow X_1$ which is a $3\varepsilon$-Gromov-Hausdorff approximation from $X_2$ to $X_1$. To see this, we will construct $f'$ as follows. Let $x_2\in X_2$, choose $x_1\in X_1$ such that $\mathbf{d}_2\left( x_2, f(x_1)\right)\leq \varepsilon$, since the second condition of definition of $f$. Setting $f'(x_2)=x_1$ then $f'$ is a $3\varepsilon$-Gromov-Hausdorff approximation from $X_2$ to $X_1$. Moreover, it is clear that $$
d_1\left(x_1,f'\left(f\left(x_1\right)\right)\right)\leq 2\varepsilon \text{ and }d_2\left(x_2,f\left(f'\left(x_2\right)\right)\right)\leq\varepsilon \mbox{ for every } x_1\in X_1, x_2\in X_2.$$

$\;\,$ Now we review the equivariant Gromov-Hausdorff distances. They were introduced first by Fukaya in \cite{Fu86,Fu88,Fukaya,FY} for isometric actions. After that they have been studied further for general actions \cite{AM,NPC,DLM,KDD}.

Let $(X,d)$ be a metric space. The $C^0$ distance between the maps $f,g: (X,d)\rightarrow (X,d)$ is defined by $d_{sup}(f,g):= \sup\limits_{x\in X}d(f(x),g(x)).$
\begin{df}
Let $\alpha$ and $\beta$ be continuous actions of G on metric spaces $\left(X,d_X\right)$ and $\left(Y,d_Y\right)$ respectively. A map $f:G\curvearrowright X \rightarrow G\curvearrowright Y$ is an $\varepsilon$-GH approximation from $\alpha$ to $\beta$ if $f: X\rightarrow Y$ is an $\varepsilon$-isometry satisfying that $d_{sup}\left(f\circ \alpha_g,\beta_g\circ f\right)\leq \varepsilon$ for every $g\in G$. If $f$ is measurable we say that $f$ is an $\varepsilon$-measurable GH approximation.
\end{df}
\begin{df}
Let $\alpha$ and $\beta$ be continuous actions of G on metric spaces $\left(X,d_X\right)$ and $\left(Y,d_Y\right)$ respectively. The equivariant GH-distance $d_{GH}$ and $d_{mGH}$ between $\alpha$ and $\beta$ are defined by \begin{align*}
d_{GH}(\alpha,\beta):= \inf\left\{\varepsilon >0:\exists \varepsilon\text{-GH approximations }f:G\curvearrowright X \rightarrow G\curvearrowright Y\right.\\
\left.\text{and }g:G\curvearrowright Y \rightarrow G\curvearrowright X \right\},
\end{align*}
\begin{align*}
d_{mGH}(\alpha,\beta):= \inf\left\{\varepsilon >0:\exists \varepsilon\text{-measurable GH approximations } f:G\curvearrowright X \rightarrow G\curvearrowright Y\right.\\
\left. \text{ and } g:G\curvearrowright Y \rightarrow G\curvearrowright X\right\},
\end{align*}
and is $\infty$ if the infimum does not exist.
\end{df}

\section{Gromov-Hausdorff convergences for generalized Wasserstein spaces}

In this section, we will prove theorem \ref{T-convergence of generalized Wasserstein spaces} and theorem \ref{T-actions on generalized Wasserstein spaces}.

Let $X$ be a Polish metric space and let $C>0$, $p\geq 1$. We denote by $\cM_p^C(X)$ the space of all measures $\mu\in \cM_p(X)$ such that $|\mu|\leq C$. Note that when $X$ is bounded then $\cM_p(X)=\cM(X)$ for every $p\geq 1$.

\begin{lemma}\label{L-convergence of generalized Wasserstein spaces} Let $\left(X_1, d_1\right)$ and $\left(X_2,d_2\right)$ be two bounded, Polish metric spaces and $C>0$. If $f: \left(X_1, d_1\right)\rightarrow \left(X_2, d_2\right)$ is an $\varepsilon$-Gromov-Hausdorff approximation and measurable then $f_{\sharp}: \left(\cM_p^C\left(X_1\right), W_p^{a,b}\right)\rightarrow \left(\cM_p^C\left(X_2\right), W_p^{a,b}\right)$ is an $\widetilde{\varepsilon}$-Gromov-Hausdorff approximation, where
$$\widetilde{\varepsilon} = 8bC^{2/p}\varepsilon +b\left(9pC\left(\mathrm{diam}(X_1)^{p-1} + \mathrm{diam}(X_2)^{p-1}\right)\varepsilon\right)^{1/p}.$$ 
\end{lemma}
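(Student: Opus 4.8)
I need to show that if $f:X_1\to X_2$ is a measurable $\varepsilon$-Gromov-Hausdorff approximation, then $f_\sharp$ is an $\widetilde\varepsilon$-GH approximation between the generalized Wasserstein spaces. So there are two things to verify: the almost-isometry estimate $|W_p^{a,b}(f_\sharp\mu,f_\sharp\nu)-W_p^{a,b}(\mu,\nu)|\le\widetilde\varepsilon$ for all $\mu,\nu\in\cM_p^C(X_1)$, and the almost-surjectivity condition, that every $\nu^*\in\cM_p^C(X_2)$ lies within $\widetilde\varepsilon$ of some $f_\sharp\mu$.

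Let me think about what goes into each piece.

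First the isometry estimate. I'll use the definition $W_p^{a,b}(\mu,\nu)=\inf C(\widetilde\mu,\widetilde\nu)$ where $C=a|\mu-\widetilde\mu|+a|\nu-\widetilde\nu|+bW_p(\widetilde\mu,\widetilde\nu)$. The key observation is that $f_\sharp$ respects all three ingredients approximately. For the mass/total-variation terms, $f_\sharp$ preserves mass exactly ($|f_\sharp\mu|=|\mu|$) and is monotone with respect to $\le$, so if $\widetilde\mu\le\mu$ then $f_\sharp\widetilde\mu\le f_\sharp\mu$ and $|\mu-\widetilde\mu|=|f_\sharp\mu-f_\sharp\widetilde\mu|$; thus the $a$-terms are exactly preserved. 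The only genuine distortion comes from the transport term $bW_p(\widetilde\mu,\widetilde\nu)$. So the heart of the matter is a lemma: $|W_p(f_\sharp\widetilde\mu,f_\sharp\widetilde\nu)-W_p(\widetilde\mu,\widetilde\nu)|$ is controlled by the stated error. To get this, I take an optimal plan $\pi\in\Opt_p(\widetilde\mu,\widetilde\nu)$ and push it forward by $(f\times f)_\sharp$, which is admissible for $(f_\sharp\widetilde\mu,f_\sharp\widetilde\nu)$; then I compare $\int d_2^p(f(x),f(y))\,d\pi$ with $\int d_1^p(x,y)\,d\pi$ using the pointwise $\varepsilon$-isometry bound $|d_2(f(x),f(y))-d_1(x,y)|\le\varepsilon$. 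The passage from the distance-level bound to the $p$-th-power level is exactly where the two error terms arise: the elementary inequality $|s^p-t^p|\le p\max(s,t)^{p-1}|s-t|$ with $s,t\le\mathrm{diam}$ produces the $\mathrm{diam}^{p-1}$ factor, and taking $p$-th roots of the resulting $W_p^p$ bound via $|\,\cdot\,|^{1/p}$ subadditivity gives the $(\cdots)^{1/p}$ shape of $\widetilde\varepsilon$. Doing the comparison symmetrically (using the $3\varepsilon$ approximate inverse $f'$ from the excerpt) yields the two-sided estimate. Here the mass factor $C^{2/p}$ and $C$ enter because $W_p$ is normalized by $|\mu|^{1/p}$ and masses are bounded by $C$.

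Second, almost-surjectivity. Given $\nu^*\in\cM_p^C(X_2)$, I use the approximate inverse $f':X_2\to X_1$, which is a $3\varepsilon$-approximation, and set $\mu:=f'_\sharp\nu^*\in\cM_p^C(X_1)$ (mass is preserved, so the bound $C$ survives). Then $f_\sharp\mu=(f\circ f')_\sharp\nu^*$, and since $d_2(x_2,f(f'(x_2)))\le\varepsilon$ for all $x_2$, the map $x_2\mapsto(x_2,f(f'(x_2)))$ gives an explicit coupling of $\nu^*$ with $f_\sharp\mu$ supported within $\varepsilon$ of the diagonal. Estimating $W_p^{a,b}(\nu^*,f_\sharp\mu)$ through this coupling — bounding $W_p$ by the transport cost of a coupling displacing mass at most $\varepsilon$ — gives a bound well inside $\widetilde\varepsilon$.

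The main obstacle is bookkeeping the constant $\widetilde\varepsilon$ honestly: tracking how the normalizing mass factor in $W_p$, the $\mathrm{diam}^{p-1}$ term from linearizing $t\mapsto t^p$, and the final $p$-th root interact, and making sure the bound is uniform over all admissible $(\widetilde\mu,\widetilde\nu)$ so that it passes to the infimum defining $W_p^{a,b}$. I'll carry out the transport-term comparison first as a standalone sublemma, then assemble the isometry bound over the infimum, and finish with the short surjectivity argument.
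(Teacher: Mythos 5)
Your plan follows essentially the same route as the paper's proof: push forward an optimal pair $(\widetilde\mu,\widetilde\nu)$ and an optimal plan by $f$ (respectively $f\times f$), observe that the mass terms are preserved exactly so only the transport term is distorted, control that distortion via $|x^p-y^p|\le p|x-y|(x^{p-1}+y^{p-1})$, obtain the reverse inequality by running the same argument with an approximate inverse $f'$ plus a triangle inequality through $f'_\sharp f_\sharp\mu$, and prove almost-surjectivity by taking $\mu=f'_\sharp\nu^*$ and the near-diagonal coupling. The one point you must be careful about: the $3\varepsilon$ approximate inverse constructed by pointwise choice in the preliminaries need not be Borel measurable, and without measurability $f'_\sharp$ is undefined; the paper fixes this by invoking the measurable $9\varepsilon$-approximate inverse of \cite[Lemma 4.1]{NPC} (satisfying $d_1(x_1,f'(f(x_1)))\le 4\varepsilon$ and $d_2(x_2,f(f'(x_2)))\le 4\varepsilon$), which is also where the constants $9$ and $8=2\cdot 4$ in $\widetilde\varepsilon$ come from.
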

\begin{proof} Given $\mu, \nu\in \cM_p^C\left(X_1\right)$ and let $(\widetilde{\mu}, \widetilde{\nu})\in\cM_p(X)\times \cM_p(X)$ be an optimal for $W_p^{a,b} \left(\mu,\nu\right)$ such that $\vert \widetilde{\mu}\vert = \vert \widetilde{\nu}\vert$ and $\widetilde{\mu}\leq \mu, \widetilde{\nu}\leq \nu$.\\
Setting $\overline{\mu}:= f_{\sharp} \widetilde{\mu}$ and $\overline{\nu}:=f_{\sharp}\widetilde{\nu}$ then $\overline{\mu}\leq f_{\sharp}\mu, \overline{\nu}\leq f _{\sharp}\nu$ and $\vert \overline{\mu}\vert = \vert \overline{\nu}\vert$. Therefore
\begin{align*}
W_p^{a,b}\left(f_{\sharp}\mu,f_{\sharp}\nu\right) \leq a\vert f_{\sharp}\mu - \overline{\mu}\vert +a\vert f_{\sharp}\nu - \overline{\nu}\vert +bW_p\left( \overline{\mu},\overline{\nu}\right).
\end{align*}
Let $\pi_1$ be an optimal transference plan between $\widetilde{\mu}$ and $\widetilde{\nu}$. Define $\pi_2 := \left(f\times f\right)_{\sharp}\pi_1$. Then $\pi_2 \in \Pi \left( f_{\sharp}\widetilde{\mu}, f_{\sharp}\widetilde{\nu}\right)$. Therefore, 
\begin{align*}
W_p^p\left(f_{\sharp}\widetilde{\mu}, f_{\sharp}\widetilde{\nu}\right)\leq \vert f_{\sharp}\widetilde{\mu}\vert \displaystyle\int_{X_2\times X_2} d_2^p \left(x_2, y_2\right) d\pi_2(x_2,y_2) = \vert \widetilde{\mu}\vert \displaystyle\int_{X_1\times X_1} d_2^p\left( f(x_1), f(y_1)\right)d\pi_1(x_1,y_1).
\end{align*}
Applying the mean value theorem for the function $t^p, t\geq 0$ we get $$\left\vert x^p-y^p\right\vert \leq p\vert x-y\vert \max\left\{x^{p-1}, y^{p-1}\right\}\leq p \vert x-y\vert \left(x^{p-1}+y^{p-1}\right).$$  
So for all $x_1,y_1\in X_1$, 
\begin{align*}
\vert d_2^p\left(f(x_1),f(y_1)\right) & -d_1^p\left(x_1, y_1\right)\vert  \leq \\ &\leq p\left\vert d_2\left(f\left(x_1\right),f\left(y_1\right)\right)-d_1\left(x_1,y_1\right) \right\vert \left(d_2^{p-1}\left(f\left(x_1\right),f\left(y_1\right)\right)+d_1^{p-1}\left(x_1,y_1\right)\right)\\
& \leq pM\varepsilon ,
\end{align*}
where $M=\text{diam}\;(X_1)^{p-1}+\text{diam}\;(X_2)^{p-1}$. Therefore,
\begin{align*}
W^p_p\left(f_{\sharp}\widetilde{\mu}, f_{\sharp}\widetilde{\nu}\right)& \leq \vert \widetilde{\mu}\vert \displaystyle\int_{X_1\times X_1}d_1^p\left(x_1, y_1\right)d\pi_1(x_1,y_1)+\vert \widetilde{\mu}\vert  pM\varepsilon\\
&= W_p^p\left(\widetilde{\mu},\widetilde{\nu}\right) +\vert \widetilde{\mu}\vert  pM\varepsilon . 
\end{align*}
Since $\vert \widetilde{\mu}\vert \leq \vert \mu\vert \leq C$, one has $
W_p\left(f_{\sharp}\widetilde{\mu}, f_{\sharp}\widetilde{\nu}\right) \leq W_p \left(\widetilde{\mu}, \widetilde{\nu}\right) + (pCM\varepsilon)^{1/p}.$
Moreover, as $\vert f_{\sharp}\mu-f_{\sharp}\widetilde{\mu}\vert +\vert f_{\sharp}\nu-f_{\sharp}\widetilde{\nu}\vert=\vert \mu-\widetilde{\mu}\vert +\vert \nu-\widetilde{\nu}\vert$ we obtain  
\begin{align*}
W_p^{a,b}\left(f_{\sharp}\mu, f_{\sharp}\nu\right) &\leq a\vert \mu-\widetilde{\mu} \vert +a\vert \nu - \widetilde{\nu}\vert +bW_p\left(\widetilde{\mu}, \widetilde{\nu}\right) + b(pCM\varepsilon)^{1/p}\notag\\
& = W_p^{a,b}\left( \mu, \nu\right) + b(pCM\varepsilon)^{1/p}.
\end{align*}
Now, using \cite[Lemma 4.1]{NPC} there exists a measurable function $f'$ which is a $9\varepsilon$-Gromov-Hausdorff approximation from $X_2$ to $X_1$, and $$d_1\left(x_1,\left(f'\circ f\right)\left(x_1\right)\right)\leq 4\varepsilon\;\text{for all}\; x_1\in X_1\quad\mbox{and}\quad d_2\left(x_2,\left(f\circ f'\right)\left(x_2\right)\right)\leq 4\varepsilon\;\text{for all}\; x_2\in X_2.$$
Using the same argument as above we get that 
\begin{align*}
W_p^{a,b} \left(f'_{\sharp}\left(f_{\sharp}\mu\right), f'_{\sharp}\left(f_{\sharp}\nu\right)\right) \leq W_p^{a,b}\left( f_{\sharp}\mu, f_{\sharp}\nu\right) +b(9pCM\varepsilon)^{1/p}.
\end{align*}
Since $\vert \mu\vert =\vert f'_{\sharp}\left(f_{\sharp} \mu\right)\vert$ we have, 
$
W_p^{a,b}\left(\mu, f'_{\sharp}\left(f_{\sharp} \mu\right)\right)\leq bW_p\left(\mu, f'_{\sharp}\left(f_{\sharp} \mu\right)\right).$
By \cite[Lemma 2.8]{NPC} we get that 
\begin{align*}
W_p^p\left(\mu, f'_{\sharp}\left(f_{\sharp} \mu\right)\right)&\leq  \vert \mu\vert \displaystyle\int_{X_1} d_1^p \left(x_1, f'(f(x_1))\right)d\mu(x_1)\\
& \leq \vert \mu\vert ^2 (4\varepsilon)^p \\
& \leq C^2 (4\varepsilon)^p. 
\end{align*}
And thus, $W_p^{a,b}\left(\mu, f'_{\sharp}\left(f_{\sharp} \mu\right)\right)\leq 4b C^{2/p}\varepsilon .$ Similarly, $W_p^{a,b}\left(\nu, f'_{\sharp}\left(f_{\sharp} \nu\right)\right)\leq 4b C^{2/p}\varepsilon$. 

Therefore,
\begin{eqnarray*}
W_p^{a,b} \left(\mu,\nu\right) &\leq& W_p^{a,b}\left(f'_{\sharp}\left(f_{\sharp}\mu\right), f'_{\sharp}\left(f_{\sharp}\nu\right)\right) + W_p^{a,b}\left(\mu, f'_{\sharp}\left(f_{\sharp} \mu\right)\right) + W_p^{a,b} \left( \nu, f'_{\sharp}\left(f_{\sharp} \nu\right)\right)\\
  &\leq& W_p^{a,b}\left( f_{\sharp}\mu,f_{\sharp}\nu\right)+b(9pCM\varepsilon)^{1/p}+8b C^{2/p}\varepsilon .
\end{eqnarray*}
And hence
\begin{align}\label{3.1}
\left\vert  W_p^{a,b}\left( f_{\sharp}\mu,f_{\sharp}\nu\right) - W_p^{a,b}\left(\mu,\nu\right)\right\vert \leq b(9pCM\varepsilon)^{1/p}+8b C^{2/p}\varepsilon = \widetilde {\varepsilon}.
\end{align}
Moreover, for all $\mu_2\in \cM_p^C\left(X_2\right)$, let $\mu_1=f'_{\sharp}\mu_2$ we will prove that $W_p^{a,b}\left(\mu_2, f_{\sharp}\mu_1\right)\leq \widetilde{\varepsilon}$. Since $\vert \mu_2\vert = \vert f_{\sharp}\left(f'_{\sharp}\mu_2\right)\vert$ we have $W_p^{a,b}\left(\mu_2, f_{\sharp}\mu_1\right) \leq bW_p\left( \mu_2, f_{\sharp}\left(f'_{\sharp}\mu_2\right) \right)$. Applying \cite[Lemma 2.8]{NPC} again we obtain 
\begin{align*}
W_p^p\left(\mu_2, f_{\sharp}\left(f'_{\sharp}\mu_2\right)\right) &\leq \vert\mu_2\vert \displaystyle\int_{X_2} d_2^p\left(x_2, f\left(f'\left(x_2\right)\right)\right)d\mu_2(x_2) \\
& \leq  \vert \mu_2\vert ^2 (4\varepsilon)^p\\
& \leq C^2(4\varepsilon)^p.
\end{align*}
Therefore 
\begin{align}\label{3.2}
W_p^{a,b} \left(\mu_2, f_{\sharp}\mu_1\right) \leq 4b C^{2/p}\varepsilon\leq \widetilde{\varepsilon}.
\end{align}
Combining (\ref{3.1}) and (\ref{3.2}) we have $f_{\sharp}$ is an $\widetilde{\varepsilon}$-Gromov-Hausdorff approximation.
\end{proof}
\begin{proof}[Proof of Theorem \ref{T-convergence of generalized Wasserstein spaces}] 
Since $\left\{\left(X_n, d_n\right)\right\}$ converges in the Gromov-Hausdorff topology to $(X,d)$, there exists a sequence of $\varepsilon_n$-approximations $f_n:X_n\rightarrow X$ with $\lim_{n\rightarrow \infty}\varepsilon_n=0$. By \cite[Lemma 4.1]{NPC}, there is a sequence of functions $f_n^*$ that is measurable and $5\varepsilon_n$-Gromov-Hausdorff approximations from $X_n$ to $X$. Using lemma \ref{L-convergence of generalized Wasserstein spaces} we get the result.\end{proof}

\begin{lemma}\label{L-actions on generalized Wasserstein spaces}
Let $\alpha_1,\alpha_2$ be actions of a topological group $G$ on bounded, Polish metric spaces $(X_1,d_1)$, $(X_2,d_2)$, respectively and let $C>0$. If $f: X_1\rightarrow X_2$ is an $\varepsilon$-measurable GH approximation from $\alpha_1$ to $\alpha_2$ then for every $p\geq 1$, the map $f_{\sharp}: \left(\cM_p^C(X_1), W_p^{a,b}\right)\rightarrow \left(\cM_p^C(X_2), W_p^{a,b}\right)$ is an $\widetilde{\varepsilon}$-measurable GH approximation from $(\alpha_1)_{\sharp}$ to $(\alpha_2)_{\sharp}$ where 
$$\widetilde{\varepsilon} = 8bC^{2/p}\varepsilon +b\left(9pC\left(\mathrm{diam}(X_1)^{p-1} + \mathrm{diam}(X_2)^{p-1}\right)\varepsilon\right)^{1/p}.$$
\end{lemma}
\begin{proof}
By lemma \ref{L-convergence of generalized Wasserstein spaces} we get that $f_{\sharp}$ is an $\widetilde{\varepsilon}$-Gromov-Hausdorff approximation. Therefore, to finish the proof, we only need to check that $$d_{sup} \left(f_{\sharp}\circ\left(\alpha_1\right)_{\sharp,g}, \left(\alpha_2\right)_{\sharp,g}\circ f_{\sharp}\right)\leq \widetilde{\varepsilon}.$$
Let $\mu_1\in \cM_p^C (X_1), g\in G$. Since $d_{\sup}\left(f\circ \alpha_{1,g},\alpha_{2,g}\circ f\right)\leq \varepsilon$ and $\left\vert \left(f\circ\alpha_{1,g}\right)_{\sharp}\mu_1\right\vert =\left\vert \left(\alpha_{2,g}\circ f\right)_{\sharp}\mu_1\right\vert =\left\vert \mu_1\right\vert$, applying \cite[Lemma 2.8]{NPC} we obtain \begin{align*}
W^p_p \left(\left(f\circ\alpha_{1,g}\right)_{\sharp}\mu_1,\left(\alpha_{2,g}\circ f\right)_{\sharp}\mu_1\right)&\leq \left\vert \mu_1\right\vert\displaystyle\int_{X_1} d_1^p \left(\left(f\circ\alpha_{1,g}\right)\left(x_1\right),\left(\alpha_{2,g}\circ f\right)\left(x_1\right)\right)d\mu_1(x_1)\\
&\leq C^2\varepsilon ^p.  
\end{align*}
As $\left\vert \left(f\circ\alpha_{1,g}\right)_{\sharp}\mu_1\right\vert =\left\vert \left(\alpha_{2,g}\circ f\right)_{\sharp}\mu_1\right\vert$ we have \begin{align*}
W_p^{a,b}\left(\left(f_{\sharp}\circ\left(\alpha_1\right)_{\sharp,g}\right)\left(\mu _1\right), \left(\left(\alpha_2\right)_{\sharp,g}\circ f_{\sharp}\right)\left(\mu _1\right)\right)& = W^{a,b}_p \left(\left(f\circ\alpha_{1,g}\right)_{\sharp}\mu_1,\left(\alpha_{2,g}\circ f\right)_{\sharp}\mu_1\right)\\
& \leq bW_p \left(\left(f\circ\alpha_{1,g}\right)_{\sharp}\mu_1,\left(\alpha_{2,g}\circ f\right)_{\sharp}\mu_1\right)\\ & \leq bC^{2/p}\varepsilon\\ & \leq \widetilde{\varepsilon}. 
\end{align*} \end{proof}
\begin{proof}[Proof of Theorem \ref{T-actions on generalized Wasserstein spaces}]
This theorem follows from lemma \ref{L-actions on generalized Wasserstein spaces}.\end{proof}
\begin{remark}
From \cite[Lemma 4.2]{NPC}, we see that if $\alpha_n$, $\alpha$ are isometric actions then the conclusion of theorem \ref{T-actions on generalized Wasserstein spaces} is also true for $d_{GH}$ instead of $d_{mGH}$.
\end{remark}
\section{The quotient maps of generalized Wasserstein spaces}
Let $X$ be a locally compact space. A continuous map $f:X\to X$ is proper if $f^{-1}(K)$ is compact for every compact $K\subset X$. A continuous action of a locally compact group $G$ to the right on $X$ is proper if the map $\alpha:G\times X\to X\times X$, defined by $(g,x)\mapsto (xg,x)$ is proper. Let a locally compact group $G$ act continuously and properly to the right on $X$. For $x\in X$ the orbit $G(x)$ is defined by $G(x)=\{ y\in X\; \vert \; \exists g\in G: y=xg\}$. We denote $X/G$ the \textit{orbit space} with the relation $\sim$ is defined by $x\sim y$ iff $\exists g\in G: y=xg$.
As the action is continuous and proper, the orbit space $X/G$ is Hausdorff and locally compact \cite[Chapter III, $\mathsection$4.2, Proposition 3 and $\mathsection$4.5, Proposition 9]{Bourbaki}. Therefore we can apply Riesz representation theory for Borel measures on $X/G$. 

Let $\lambda$ be a left Haar measure on $G$ and $p:X\to X/G$ be the natural quotient map. Let $f\in C_c(X)$ and $x\in X$. As the action is proper, the function $G\to \C$, $g\mapsto f(xg)$ is in $C_c(G)$. Then we can define the map $f^1:X\to \C$ by $f^1(x):=\int_Gf(xg)d\lambda(g)$ for every $x\in X$. Since $\lambda$ is left invariant we get that $f^1(xh)=f^1(x)$ for every $x\in X$ and $h\in G$. Therefore we can define the map $f^*:X/G\to \C$ by $f^*(p(x))=\int_Gf(xg)d\lambda(g)$, for every $x\in X$. It is not difficult to see that the function $f^1$ is continuous and hence $f^*$ is continuous on $X/G$ as the map $p$ is an open map. As $\supp(f)\subset Y$ for some compact subset $Y$ of $X$, one has $\supp(f^*)\subset p(Y)$, a compact subset of $X/G$ and hence $f^*\in C_c(X/G)$. As a consequence, we can define a linear function $\Phi:C_c(X)\to C_c(X/G)$ by $\Phi(f)=f^*$ and $\Phi(f)\geq 0$ for every $f\geq 0$. Applying Riesz representation theorem we get that for a Borel measure $\nu$ on $X/G$ there is a Borel measure $\widehat{\nu}$ on $X$ such that $\widehat{\nu}(f)=\nu(f^*)$ for every $f\in C_c(X)$.

 On the other hand, for every $x\in X,h\in G$ one has
$$(R_{h^{-1}}f)^1(x)=\int_GR_{h^{-1}}f(xg)d\lambda(g)=\Delta(h)\int_G f(xg)d\lambda(g)=\Delta(h)f^1(x),$$
where $R_{h}f(x)=f(xh)$ for every $x\in X,h\in G$, and $\Delta:G\to (0,\infty)$ is the right-hand modular function of $G$, i.e. $\int_Gs(gh^{-1})d\lambda(g)=\Delta(h)\int_G s(g)d\lambda(g)$ for every $h\in G$, $s\in L^1(G,\lambda)$.
 
Therefore for every $f\in C_c(X), h\in G$ one has $(R_hf)^*=\Delta(h^{-1})f^*$ and hence $$\widehat{\nu}(R_{h^{-1}}f)=\Delta(h)\widehat{\nu}(f).$$
Furthermore, we have the following result.

\begin{lemma}(\cite[Chapter 7, $\mathsection$2, Proposition 4]{Bourbaki1} or \cite[Theorem 7.3.3]{Wijsman}) Let a locally compact group $G$ act properly on the right of a locally compact space $X$ and let $\lambda$ be a left Haar measure of $G$. Then
\begin{enumerate}
\item Given a Borel measure $\nu$ on $X/G$ there exists a unique Borel measure $\widehat{\nu}$ on $X$ such that $\widehat{\nu}(f)=\nu(f^*)$ for every $f\in C_c(X)$. In addition to, one has $\widehat{\nu}(R_{h^{-1}}f)=\Delta(h)\widehat{\nu}(f),$ for every $h\in G$. 
\item Conversely, let $\mu$ be a Borel measure on $X$ such that $\mu(R_{h^{-1}}f)=\Delta(h)\mu(f),$ for every $f\in C_c(X),h\in G$. Then there exists a unique Borel measure $\mu^b$ on $X/G$ such that $\mu=\widehat{\mu^b}$.
\end{enumerate}
\end{lemma}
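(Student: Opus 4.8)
The plan is to leverage the positive linear map $\Phi\colon C_c(X)\to C_c(X/G)$, $\Phi(f)=f^*$, constructed above, together with the Riesz representation theorem, and to locate all the difficulty in the converse direction (2). For part (1) almost everything is already in the discussion preceding the statement: given a Borel measure $\nu$ on $X/G$, the functional $f\mapsto \nu(f^*)$ is positive and linear on $C_c(X)$, so Riesz produces a Radon measure $\widehat\nu$ with $\widehat\nu(f)=\nu(f^*)$; this $\widehat\nu$ is unique because a Radon measure is determined by its values on $C_c(X)$, and the relative invariance $\widehat\nu(R_{h^{-1}}f)=\Delta(h)\widehat\nu(f)$ is read off from the already-noted identity $(R_hf)^*=\Delta(h^{-1})f^*$. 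So the substance is part (2), which I would organize in three steps.

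\emph{Step 1: surjectivity of $\Phi$, in a usable form.} I would first show that every $g\in C_c(X/G)$ equals $f^*$ for some $f\in C_c(X)$, and that $f$ may be taken $\ge 0$ when $g\ge 0$. Given $g$ with compact support $K$, properness of the action and openness of $p$ give a compact $L\subset X$ with $p(L)=K$; picking $\varphi\in C_c(X)$ with $\varphi\ge 0$ and $\varphi>0$ on $L$ yields $\varphi^*>0$ on $K$, and then
$$f(x):=\varphi(x)\,\frac{g(p(x))}{\varphi^*(p(x))}\qquad(\text{interpreted as }0\text{ where }g\text{ vanishes})$$
lies in $C_c(X)$ and satisfies $f^*=g$, since $g$ and $\varphi^*$ factor through $p$ and so pull out of the orbit integral defining $f^*$. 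Taking $g\equiv 1$ on a prescribed compact set, this also produces, for each compact $K\subset X/G$, a function $\varphi\in C_c(X)$, $\varphi\ge 0$, with $\varphi^*\equiv 1$ on $K$.

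\emph{Step 2: the symmetry identity (the crux).} The key is to prove that for all $f,\varphi\in C_c(X)$,
$$\int_X f(x)\,\varphi^*(p(x))\,d\mu(x)=\int_X \varphi(x)\,f^*(p(x))\,d\mu(x).$$
I would expand the left-hand side, using $\varphi^*(p(x))=\int_G\varphi(xg)\,d\lambda(g)$ and Fubini (legitimate since all supports are compact), as $\int_G\int_X f(x)\varphi(xg)\,d\mu(x)\,d\lambda(g)$; in the inner integral the relative invariance hypothesis, applied to $\Psi(x)=f(x)\varphi(xg)$ in the form $\int_X\Psi(xg^{-1})\,d\mu(x)=\Delta(g)\int_X\Psi\,d\mu$, gives $\int_X f(x)\varphi(xg)\,d\mu(x)=\Delta(g)^{-1}\int_X\varphi(x)f(xg^{-1})\,d\mu(x)$. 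Integrating in $g$ and applying Fubini again reduces the inner $G$-integral to $\int_G\Delta(g)^{-1}f(xg^{-1})\,d\lambda(g)$, which by the inversion formula for the left Haar measure $\lambda$ equals $\int_G f(xg)\,d\lambda(g)=f^*(p(x))$; this yields the right-hand side. This is exactly where the hypothesis $\mu(R_{h^{-1}}f)=\Delta(h)\mu(f)$ is consumed, and I expect the tracking of the modular factors $\Delta$ through the change of variables and the Haar inversion to be the main obstacle: the mechanism is transparent but the exponents must be matched to the paper's conventions.

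\emph{Step 3: construction of $\mu^b$ and conclusion.} With the symmetry identity in hand, well-definedness is immediate. If $f^*=0$, choose by Step 1 a $\varphi\ge 0$ with $\varphi^*\equiv 1$ on the compact set $p(\supp f)$, so that $f\cdot(\varphi^*\circ p)=f$; the identity then forces $\mu(f)=\int_X\varphi\,(f^*\circ p)\,d\mu=0$. Hence $g\mapsto\mu(f)$, for any $f\in C_c(X)$ with $f^*=g$, is a well-defined linear functional on $C_c(X/G)$, and it is positive by the sign clause of Step 1; Riesz representation then gives a Borel measure $\mu^b$ on $X/G$ with $\mu^b(f^*)=\mu(f)$ for all $f\in C_c(X)$. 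By construction $\widehat{\mu^b}(f)=\mu^b(f^*)=\mu(f)$, i.e. $\mu=\widehat{\mu^b}$, and uniqueness of $\mu^b$ follows since $\Phi$ is onto, so the values $\mu^b(g)$ are forced for every $g\in C_c(X/G)$.
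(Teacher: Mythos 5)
The paper gives no proof of this lemma at all: it is quoted verbatim from Bourbaki (Int.\ VII, \S 2, Prop.\ 4) and Wijsman (Theorem 7.3.3), so there is nothing in the source to compare your argument against except those references. Your proof is correct and is essentially the standard argument from those texts: part (1) is immediate from positivity of $f\mapsto\nu(f^*)$ and Riesz plus the identity $(R_hf)^*=\Delta(h^{-1})f^*$ already recorded in the paper, and for part (2) the three ingredients you isolate --- surjectivity of $\Phi$ via the quotient trick $f=\varphi\cdot\bigl((g/\varphi^*)\circ p\bigr)$, the adjointness identity $\int_X f\,(\varphi^*\circ p)\,d\mu=\int_X\varphi\,(f^*\circ p)\,d\mu$ obtained from the relative invariance of $\mu$ together with the inversion formula $\int_G\Delta(g^{-1})\psi(g^{-1})\,d\lambda(g)=\int_G\psi(g)\,d\lambda(g)$, and the resulting well-definedness and positivity of $g\mapsto\mu(f)$ --- are exactly the right ones, and the modular factors do cancel as you compute. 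The only points worth flagging are routine: the compact $L$ with $p(L)=K$ uses local compactness of $X$ and Hausdorffness of $X/G$ (the latter coming from properness), and the Fubini step needs the set $\{g\in G:(\supp f)g\cap\supp\varphi\neq\emptyset\}$ to be relatively compact, which is precisely where properness of the action is consumed.
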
 
The measure $\mu^b$ in the previous lemma is called the quotient of $\mu$ and $\lambda$ and is denoted by $\mu/\lambda$.

 If the acting group $G$ is unimodular, i.e. $\Delta(h)=1$ for every $h\in G$, then we get that for every Borel measure $\nu$ of $X/G$ there exists a unique $G$-invariant measure $\widehat{\nu}$ on $X$ such that $\widehat{\nu}(f)=\nu(f^*)$ for every $f\in C_c(X)$. Furthermore, if $G$ is compact 
and $\lambda$ is the normalized Haar measure of $G$ then for every $G$-invariant measure $\mu$ of $G$, the quotient measure $\mu/\lambda$ coincides with the push forward measure $p_\sharp\mu$ where $p:X\to X/G$ is the natural quotient map \cite[Proposition 7.3.5]{Wijsman}. Therefore in this case the map $p_\sharp:\cM^G(X)\to \cM(X/G)$ is bijective, where $\cM^G(X)$ is the space of all $\mu\in \cM(X)$ which is $G$-invariant. A measure $\mu\in \cM(X)$ is $G$-invariant if $\mu(Ag)=\mu(A)$ for every $g\in G$ and Borel subset $A$ of $X$. 

From now on, the acting group $G$ is compact and let $G$ act on the right of a locally compact complete separable metric space $(X,d)$ by isometries.
An element of the quotient $X/G$ will be denoted by $x^*=p(x)$. On $X/G$ we define the followings distance
$$d_{X/G} (x^*, y^*):= \inf\limits_{g\in G} d_X (gx,y)=\inf\limits_{g,h\in G} d_X(gx,hy).$$
Since $G$ is compact, for every $x,y\in X$, there exists $g\in G$ such that $
d_{X/G} (x^*,y^*)  = d_X(gx,y).$

As $(X,d)$ is a locally compact complete separable metric space, so is $(X/G,d_{X/G})$.

For every $p\geq 1$, we denote $\mathcal{M}^G_p(X)=\cM_p(X)\cap \cM^G(X)$. Let $\lambda$ be the normalized Haar measure of $G$. For every $x\in X$, the measure $\lambda_x:=\int_G \delta_{xg} d\lambda (g)$ is the unique $G$-invariant probability measure satisfying $p_\sharp\lambda_x=\delta_{x^*}$. As $\lambda_x=\lambda_y$ whenever there is some $g\in G$ such that $x=yg$, the map from $X/G$ to $\mathcal{P}(X)$ defined by $x^*\mapsto \lambda_x$ is well defined and measurable. Therefore, for every $\mu^*\in \mathcal{P}(X/G)$ we can define a $G$-invariant measure $\widehat{\mu^*}$ on $X$ by $\widehat{\mu^*}:=\int_{X/G}\lambda_xd\mu^*(x^*)$. 

Now we are ready to prove theorem \ref{T-isometry}.

\noindent\textit{Proof of Theorem \ref{T-isometry}.}

(1) We need to check that if $\mu\in \mathcal{M}_p(X)$ then $p_{\sharp}\mu\in \mathcal{M}_p(X/G)$. Since $\mu\in \mathcal{M}_p (X)$ there exists $x_0\in X$ such that $\int_X d_X^p (x_0,x)d\mu(x)<\infty$.
So 
\begin{align*}
\int_{X/G} d_{X/G}^p \left(x_0^*, x^*\right) d\left(p_{\sharp}\mu\right) (x^*) = \int_X d^p_{X/G} \left(x^*_0, p(x)\right)d \mu(x) \leq \int_Xd_X^p (x_0,x)d\mu(x) <\infty.
\end{align*}
Therefore $p_{\sharp}\mu\in\mathcal{M}_p(X/G)$.

Now we prove that $p_\sharp:\cM_p(X)\to \cM_p(X/G)$ is onto. Let $\nu^*\in \cM_p(X/G)$. Then there exists $x_0^*\in X/G$ such that 
$\int_{X/G}d^*(x^*,x_0^*)^pd\nu^*(x^*)<\infty$. Applying \cite[Theorem 3.2]{Garcia} we get that $W_p(\frac{\widehat{\nu^*}}{|\widehat{\nu^*}|},\lambda_{x_0})=W_p(\frac{\nu^*}{|\nu^*|},\delta_{x_0^*})$ and therefore
\begin{eqnarray*}
\big(\int_X d^p(x,x_0)d\widehat{\nu^*}(x)\big)^{1/p}&=&|\widehat{\nu^*}|^{1/p}W_p(\frac{\widehat{\nu^*}}{|\widehat{\nu^*}|},\delta_{x_0})\\
&\leq &|\widehat{\nu^*}|^{1/p}(W_p(\frac{\widehat{\nu^*}}{|\widehat{\nu^*}|},\lambda_{x_0})+W_p(\lambda_{x_0},\delta_{x_0}))\\
&= &|\widehat{\nu^*}|^{1/p}(W_p(\frac{\nu^*}{|\nu^*|},\delta_{x_0^*})+W_p(\int_G\delta_{x_0g}d\lambda(g),\delta_{x_0}))\\
&\leq &\big(\int_{X/G}d^*(x^*,x_0^*)^pd\nu^*(x^*)\big)^{1/p}+|\widehat{\nu^*}|^{1/p}\sup_{g\in G}d(x_0g,x_0)<\infty.
\end{eqnarray*}
Hence $\widehat{\nu^*}\in \cM_p(X)$. As we also have $\widehat{\nu^*}\in \cM^G(X)$ and $p_\sharp\widehat{\nu^*}=\nu^*$ we get that the map $p_\sharp:\cM_p(X)\to \cM_p(X/G)$ is onto.

(2) Let $\mu,\nu\in \mathcal{M}(X)$ and $\left(\gamma_1,\gamma_2\right)\in \cM_p(X)\times \cM_p(X)$  be an optimal for $W_p^{a,b}\left(\mu,\nu\right)$ such that $\vert\gamma_1\vert=\vert\gamma_2\vert$ and $\gamma_1\leq \mu, \gamma_2\leq \nu$. Let $\pi_1$ be an optimal transference between $\gamma_1$ and $\gamma_2$. Define $\pi_2=\left(p\times p\right)_{\sharp}\pi_1$ then $\pi_2\in \Pi\left(p_{\sharp}\gamma_1,p_{\sharp}\gamma_2\right)$. So that 
\begin{align*}
W_p^p\left(p_{\sharp}\gamma_1,p_{\sharp}\gamma_2\right)&\leq \vert p_{\sharp}\gamma_1\vert\int_{(X/G)\times(X/G)} d^p_{X/G}\left(x^*,y^*\right)d\pi_2\left(x^*,y^*\right) \\
&=\vert \gamma_1\vert \int_{X\times X}d^p_{X/G}\left(p(x),p(y)\right)d\pi_1 (x,y)\\
&\leq \vert \gamma_1\vert\int_{X\times X}d^p_X (x,y) d\pi_1 (x,y)\\
&=W_p^p\left( \gamma_1,\gamma_2\right).
\end{align*}
Since $\gamma_1\leq \mu,\gamma_2\leq \nu$ and $\vert\gamma_1\vert=\vert\gamma_2\vert$ we get $ p_{\sharp}\gamma_1\leq p_{\sharp}\mu, p_{\sharp}\gamma_2\leq p_{\sharp}\nu$ and $\vert p_{\sharp}\gamma_1\vert=\vert p_{\sharp}\gamma_2\vert$. On the other hand, we also have $p_\sharp \gamma_i\in \cM_p(X/G), i=1,2$. Therefore,
\begin{align*}
W_p^{a,b}\left(p_{\sharp}\mu,p_{\sharp}\nu\right)&\leq a\vert p_{\sharp}\mu- p_{\sharp}\gamma_1\vert + a\vert  p_{\sharp}\nu- p_{\sharp}\gamma_2\vert+bW_p\left( p_{\sharp}\gamma_1, p_{\sharp}\gamma_2\right)\\
& \leq a\vert \mu-\gamma_1\vert+a\vert \nu-\gamma_2\vert +bW_p\left(\gamma_1,\gamma_2\right)\\
& = W_p^{a,b}\left(\mu,\nu\right).
\end{align*}
(3) As $p_\sharp:\cM^G(X)\to \cM(X/G)$ is onto and from (2), it is sufficient to prove that $$W_p^{a,b}(p_\sharp \mu,p_\sharp\nu)\geq W_p^{a,b}(\mu,\nu) \mbox{ for every } \mu,\nu\in \cM^G(X).$$
Let $\mu,\nu\in \cM^G(X)$ and put $\mu^*=p_{\sharp}\mu,\nu^*=p_{\sharp}\nu\in \cM(X/G)$. For every $x\in X$, we define a $G$-invariant measure $\lambda_x\in\mathcal{P}(X)$ by $\lambda_x:=\int_G \delta_{xg} d\lambda (g).$
We set $\mu_0^*:=\mu^*/\vert\mu^*\vert, \nu_0^*:=\nu^*/\vert\nu^*\vert$ and define measures $\widehat{\mu_0^*},\widehat{\nu_0^*}\in \mathcal{P}(X)$ as follows
\begin{align}\label{3.3}
\widehat{\mu_0^*}=\int_{X/G} \lambda_x d\mu_0^* (x^*)\text{ and } \widehat{\nu_0^*}=\int_{X/G}\lambda_x d\nu_0^* (x^*).
\end{align}
Then $\widehat{\mu_0^*},\widehat{\nu_0^*}$ are the $G$-invariant probability measures on $X$ such that $p_\sharp\widehat{\mu_0^*}=\mu_0^*$ and $p_\sharp\widehat{\nu_0^*}=\nu_0^*$. Moreover, if we put $\mu_1=\mu/\vert\mu\vert=\mu/\vert\mu^*\vert$ and $\nu_1=\nu/\vert\nu\vert=\nu/\vert\nu^*\vert$ then $\mu_1,\nu_1$ are also the $G$-invariant probability measures on $X$ satisfying $p_{\sharp}\mu_1=\mu_0^*,p_{\sharp}\nu_1=\nu_0^*$. Therefore, $\widehat{\mu_0^*}=\mu_1,\widehat{\nu_0^*}=\nu_1$, and hence
$\mu=\vert\mu^*\vert\widehat{\mu_0^*} \text{ and } \nu=\vert\nu^*\vert \widehat{\nu_0^*}.$

Thus, we only need to prove that $W_p^{a,b}(\mu^*,\nu^*)\geq W_p^{a,b}\left(\vert \mu^*\vert\widehat{\mu_0^*},\vert\nu^*\vert\widehat{\nu_0^*}\right)$. We choose an optimal $(\widetilde{\mu}^*,\widetilde{\nu}^*)\in \cM_p(X/G)\times \cM_p(X/G)$ for $W_p^{a,b}(\mu^*,\nu^*)$ such that $\vert \widetilde{\mu}^*\vert = \vert \widetilde{\nu}^*\vert, \widetilde{\mu}^*\leq \mu^*, \widetilde{\nu}^*\leq \nu^*$ then 
\begin{align*}
W_p^{a,b}\left(\mu^*,\nu^*\right) =a\vert\mu^*-\widetilde{\mu}^*\vert+a\vert\nu^*-\widetilde{\nu}^*\vert+bW_p\left(\widetilde{\mu}^*,\widetilde{\nu}^*\right).
\end{align*}
Putting $\widetilde{\mu}_0^*:=\widetilde{\mu}^*/\vert \widetilde{\mu}^*\vert,\widetilde{\nu}_0^*:=\widetilde{\nu}^*/\vert \widetilde{\nu}^*\vert$ and we define 
\begin{align}\label{3.4}
\widetilde{\mu}_*=\int_{X/G}\lambda_xd\widetilde{\mu}_0^*(x^*) \text{ and } \widetilde{\nu}_*=\int_{X/G}\lambda_xd\widetilde{\nu}_0^*(x^*).
\end{align}
Then $\widetilde{\mu}_*, \widetilde{\nu}_*$ are the $G$-invariant probability measures on $X$ such that $p_{\sharp}\widetilde{\mu}_*=\widetilde{\mu}_0^*, p_{\sharp}\widetilde{\nu}_*=\widetilde{\nu}_0^*$. Therefore, using \cite[Theorem 3.2]{Garcia} we get that 
\begin{align*}
W_p\left(\widetilde{\mu}_*, \widetilde{\nu}_*\right)=W_p\left(\widetilde{\mu}_0^*, \widetilde{\nu}_0^*\right).
\end{align*}
So 
$W_p\left(\widetilde{\mu}^*,\widetilde{\nu}^*\right)=\vert\widetilde{\mu}^*\vert^{1/p}W_p\left(\widetilde{\mu}_0^*,\widetilde{\nu}_0^*\right)=\vert\widetilde{\mu}^*\vert^{1/p}W_p\left(\widetilde{\mu}_*, \widetilde{\nu}_*\right)=W_p\left(\vert\widetilde{\mu}^*\vert\widetilde{\mu}_*, \vert\widetilde{\nu}^*\vert\widetilde{\nu}_*\right).$

Moreover, since (\ref{3.3}) and (\ref{3.4}), for every Borel subset $A$ of $X$ one has 
\begin{align*}
\vert \widetilde{\mu}^*\vert\widetilde{\mu}_* (A)=\vert \widetilde{\mu}^*\vert \int_{X/G} \lambda_x (A)d\widetilde{\mu}_0^* (x^*)=\int_{X/G} \lambda_x(A) d\vert\widetilde{\mu}^*\vert \widetilde{\mu}_0^* (x^*)\leq \int_{X/G} \lambda_x(A) d\vert \mu^*\vert\mu_0^* (x^*)=\vert \mu^*\vert\widehat{\mu_0^*} (A).
\end{align*}
So $\vert\widetilde{\mu}^*\vert \widetilde{\mu}_*\leq \vert\mu^*\vert\widehat{\mu_0^*}$. As $\widetilde{\mu}^*\in \cM_p(X/G)$ we get that so is $\widetilde{\mu}^*_0$ and therefore similar to the proof of (1) we have $\widetilde{\mu}_*\in \cM_p(X)$. Hence $\vert\widetilde{\mu}^*\vert \widetilde{\mu}_*\in \cM_p(X)$. 
Similarly, $\vert \widetilde{\nu}^*\vert\widetilde{\nu}_*\leq \vert \nu^*\vert \widehat{\nu_0^*}$ and $\vert \widetilde{\nu}^*\vert\widetilde{\nu}_*\in \cM_p(X)$. On the other hand, $\vert \mu^*-\widetilde{\mu}^*\vert =\big\vert\vert\mu^*\vert\mu_0^*-\vert\widetilde{\mu}^*\vert\widetilde{\mu}_0^*\big\vert=\left\vert\vert\mu^*\vert\widehat{\mu_0^*}-\vert\widetilde{\mu}^*\vert\widetilde{\mu}_*\right\vert$ and $\vert \nu^*-\widetilde{\nu}^*\vert =\big\vert\vert\nu^*\vert\nu_0^*-\vert\widetilde{\nu}^*\vert\widetilde{\nu}_0^*\big\vert=\left\vert\vert\nu^*\vert\widehat{\nu_0^*}-\vert\widetilde{\nu}^*\vert\widetilde{\nu}_*\right\vert$. It implies that 
\begin{align*}
W_p^{a,b}\left(\mu^*,\nu^*\right)=a\left\vert\vert\mu^*\vert\widehat{\mu_0^*} -\vert\widetilde{\mu}^*\vert\widetilde{\mu}_*\right\vert+a\left\vert\vert\nu^*\vert\widehat{\nu_0^*} -\vert\widetilde{\nu^*}\vert\widetilde{\nu}_*\right\vert + bW_p\left(\vert \widetilde{\mu^*}\vert \widetilde{\mu}_*,\vert \widetilde{\nu^*}\vert \widetilde{\nu}_*\right).
\end{align*}
Therefore we obtain 
\begin{align*}
W_p^{a,b}\left(\mu^*,\nu^*\right)\geq W_p^{a,b}\left(\vert \mu^*\vert \widehat{\mu_0^*},\vert\nu^*\vert\widehat{\nu_0^*}\right)=W_p^{a,b}\left(\widehat{\mu^*},\widehat{\nu^*}\right).
\end{align*}
(4) follows from (1) and (3).
\dpcm
\section{The dual formulation for the $W^{a,b}_1$ distance}
In this section, we will study a dual formulation for the generalized Wasserstein $W^{a,b}_1$ distance and its consequences. Before proving theorem \ref{T-duality of generalized Wasserstein spaces}, let us recall some preparation results. 

Let $F: [0,\infty)\to (0,\infty)$ be a convex and lower semicontinuous
function. We define function $F^\circ:\R\to [-\infty,\infty]$ by $F^\circ(\varphi):=\inf_{s\geq 0}\big(\varphi s+F(s)\big)$ for every $\varphi\in \R$. As $F$ is convex the map $x\mapsto \frac{F(x)-F(0)}{x-0}$ is increasing in $(0,\infty)$ and hence we define $F_\infty':=\lim_{s\to\infty}\frac{F(s)}{s}=\sup_{s>0}\frac{F(s)-F(0)}{s}$. Now we define the functional $\cF:\cM(X)\times \cM(X)\to [0,\infty]$ by 
$$\cF(\gamma|\mu):=\int_XF(f)d\mu+F'_\infty \gamma^\perp(X),$$
where $\gamma=f\mu+\gamma^\perp$ is the Lebesgue decomposition of $\gamma$ with respect to $\mu$. 
\begin{theorem}\label{T-duality of entropy functional}
(\cite[Theorem 2.7 and Remark 2.8]{Liero})
Let $F: [0,\infty)\to (0,\infty)$ be a convex and lower semicontinuous function and $X$ be a Polish metric space. Then for every $\gamma,\mu\in \cM(X)$, we have $$\cF(\gamma|\mu)=\sup\big\{\int_XF^\circ(\varphi)d\mu-\int_X\varphi d\gamma:\varphi,F^\circ(\varphi)\in C_b(X)\big\}.$$ 
\end{theorem}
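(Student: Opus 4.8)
The plan is to prove the two inequalities separately: the bound $\cF(\gamma|\mu)\ge\sup\{\cdots\}$ is elementary, and the reverse bound $\cF(\gamma|\mu)\le\sup\{\cdots\}$ is the substantial part.

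First I would dispatch the easy direction. Fix an admissible $\varphi$, so that $\varphi,F^\circ(\varphi)\in C_b(X)$. Finiteness of $F^\circ(\varphi)(x)=\inf_{s\ge0}(\varphi(x)s+F(s))$ forces $\varphi(x)\ge -F'_\infty$ pointwise, since $\varphi(x)<-F'_\infty$ would make $\varphi(x)s+F(s)\to-\infty$ as $s\to\infty$. Writing the Lebesgue decomposition $\gamma=f\mu+\gamma^\perp$ and taking $s=f(x)$ in the defining infimum gives $F^\circ(\varphi)\le\varphi f+F(f)$, hence $\int_X F^\circ(\varphi)\,d\mu-\int_X\varphi f\,d\mu\le\int_X F(f)\,d\mu$; meanwhile $\varphi\ge -F'_\infty$ and $\gamma^\perp\ge0$ give $-\int_X\varphi\,d\gamma^\perp\le F'_\infty\,\gamma^\perp(X)$. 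Since $\int_X\varphi\,d\gamma=\int_X\varphi f\,d\mu+\int_X\varphi\,d\gamma^\perp$, adding the two estimates yields $\int_X F^\circ(\varphi)\,d\mu-\int_X\varphi\,d\gamma\le\cF(\gamma|\mu)$, and taking the supremum gives the first inequality.

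For the reverse inequality I would, given $\varepsilon>0$, construct an admissible $\varphi$ whose dual objective is within $\varepsilon$ of $\cF(\gamma|\mu)$. The guiding idea is Legendre--Fenchel duality: if $\varphi\in-\partial F(s_0)$ then the infimum defining $F^\circ(\varphi)$ is attained at $s_0$ and $F^\circ(\varphi)-\varphi s_0=F(s_0)$; moreover every element of $-\partial F(s_0)$ lies in $[-F'_\infty,\infty)$, so the constraint is automatic. On the absolutely continuous part I would therefore select $\varphi(x)\in-\partial F(f(x))$, which recovers $\int_X F(f)\,d\mu$ exactly; such a measurable selection exists because $f$ is measurable and $s\mapsto\partial F(s)$ is maximal monotone. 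On the singular part I would instead take $\varphi$ near the threshold $-F'_\infty$ (or near $-N$ with $N\to\infty$ when $F'_\infty=\infty$) on a compact set carrying all but $\varepsilon$ of $\gamma^\perp$, so that $-\int_X\varphi\,d\gamma^\perp$ approximates $F'_\infty\gamma^\perp(X)$. Since $\mu\perp\gamma^\perp$, I can split $X$ into disjoint Borel pieces supporting the two parts, apply the two recipes, and upgrade the resulting measurable function to one in $C_b(X)$ by truncation, Lusin's theorem and a Tietze extension, controlling the induced error in both $\int_X F^\circ(\varphi)\,d\mu$ and $\int_X\varphi\,d\gamma$.

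The hard part will be maintaining simultaneous boundedness and continuity of $\varphi$ and of $F^\circ(\varphi)$ throughout these approximations, because the pointwise optimizers in $-\partial F(f(x))$ may be unbounded (when $F'_\infty=\infty$) or discontinuous, while the singular part pulls $\varphi$ toward $-F'_\infty$. I would handle this by phasing in the bounds gradually---truncation levels tending to infinity and a compact exhaustion of $\supp\gamma^\perp$---and checking at each stage, via the convexity and lower semicontinuity of $F$, that the dual objective changes by at most a controlled multiple of $\varepsilon$; the case $\cF(\gamma|\mu)=\infty$ is recovered by letting these levels diverge. I note finally that this reverse inequality is exactly the conjugacy formula for integral functionals (Rockafellar, Bouchitt\'e--Valadier) applied to the normal convex integrand $(x,s)\mapsto F(s)$ on the Polish space $X$, from which it may alternatively be deduced.
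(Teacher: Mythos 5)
The paper does not prove this statement at all: it is imported verbatim from Liero--Mielke--Savar\'e \cite[Theorem 2.7 and Remark 2.8]{Liero}, so there is no in-paper proof to compare yours against. Judged on its own terms, your easy direction is complete and correct: finiteness of $F^\circ(\varphi)$ does force $\varphi\geq -F'_\infty$, the choice $s=f(x)$ gives $F^\circ(\varphi)-\varphi f\leq F(f)$ pointwise, and splitting $\int\varphi\,d\gamma$ along the Lebesgue decomposition yields the inequality $\leq\cF(\gamma|\mu)$. Your hard direction is a sketch of what is essentially the standard argument (and the one underlying the cited source, equivalently Rockafellar-type duality for integral functionals): subdifferential selection on the absolutely continuous part, $\varphi$ pushed toward $-F'_\infty$ on the singular part, then regularization to $C_b(X)$.

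The one place where the sketch would fail if executed naively is the singular part. You cannot in general take $\varphi$ equal to, or merely ``near,'' $-F'_\infty$ and control the error by measure alone, because $F^\circ(-F'_\infty+\delta)=\inf_{s\geq 0}\bigl(F(s)-(F'_\infty-\delta)s\bigr)$ can tend to $-\infty$ as $\delta\to 0$ (e.g.\ $F(s)=s-\log(1+s)+1$). A continuous $\varphi$ that is close to $-F'_\infty$ on a compact set carrying $\gamma^\perp$ is necessarily close to $-F'_\infty$ on an open neighborhood of positive $\mu$-measure, and there $F^\circ(\varphi)$ contributes a large negative quantity to $\int F^\circ(\varphi)\,d\mu$. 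The fix is the order of quantifiers: fix $\delta>0$ first, record the finite bound $|F^\circ(-F'_\infty+\delta)|$, and only then shrink the $\mu$-measure of the neighborhood (using regularity of $\mu$ and $\mu\perp\gamma^\perp$) so that this product is below $\varepsilon$; your phrase ``controlling the induced error'' hides exactly this dependence. With that ordering made explicit, together with the lower truncation $\varphi\geq -F'_\infty+\delta$ after Tietze extension so that $F^\circ(\varphi)$ stays finite and continuous, the plan goes through; for the purposes of this paper, though, the correct move is simply the citation the authors give.
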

We now give the proof of the easy part of theorem \ref{T-duality of generalized Wasserstein spaces}.  
\begin{lemma}
\label{L-easy part of duality}
Suppose $X$ is a Polish metric space. For every $\mu_1,\mu_2\in\mathcal{M}(X)$, we have 
\begin{align*}
W_1^{a,b}(\mu_1,\mu_2)\geq\sup\limits_{(\varphi_1,\varphi_2)\in\Phi_W} \sum\limits_i \int_X I\left(\varphi_i(x)\right) d\mu_i(x).
\end{align*}
\end{lemma}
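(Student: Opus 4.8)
The plan is to prove the inequality
\[
W_1^{a,b}(\mu_1,\mu_2)\geq\sup_{(\varphi_1,\varphi_2)\in\Phi_W}\sum_i\int_X I(\varphi_i)\,d\mu_i
\]
by taking an arbitrary admissible pair $(\varphi_1,\varphi_2)\in\Phi_W$ and an arbitrary competitor in the definition of $W_1^{a,b}$, and showing that the cost of the competitor dominates the dual value. Concretely, fix $(\varphi_1,\varphi_2)\in\Phi_W$ and let $\widetilde\mu,\widetilde\nu\in\cM_1(X)$ with $|\widetilde\mu|=|\widetilde\nu|$ realize (or nearly realize) the infimum, with associated optimal plan $\pi\in\Pi(\widetilde\mu,\widetilde\nu)$, so that
\[
C(\widetilde\mu,\widetilde\nu)=a|\mu_1-\widetilde\mu|+a|\mu_2-\widetilde\nu|+bW_1(\widetilde\mu,\widetilde\nu).
\]
The goal is to bound each of the three pieces from below by the corresponding part of the dual expression.

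First I would unpack the definition $I(\varphi)=\inf_{s\geq 0}(s\varphi+a|1-s|)$. The key observation is that the constraints defining $\Phi_W$ are exactly tailored to this inf: the condition $\varphi_i\geq -a$ guarantees $I(\varphi_i)$ is finite (indeed $I(\varphi)=\varphi\wedge a$ when $\varphi\geq -a$, attained at $s=1$ or $s=0$), while the condition $\varphi_1(x)+\varphi_2(y)\leq b\,d(x,y)$ is the marginal/Lipschitz-type constraint that controls the transport term. I would write
\[
\sum_i\int_X I(\varphi_i)\,d\mu_i
\leq \sum_i\int_X\big(\varphi_i\,\tfrac{d\widetilde\mu_i}{d\mu_i}+a|1-\tfrac{d\widetilde\mu_i}{d\mu_i}|\big)\,d\mu_i,
\]
using that $I(\varphi_i)\leq s\varphi_i+a|1-s|$ pointwise for the Radon–Nikodym density $s=d\widetilde\mu_i/d\mu_i$ (handling the singular part via $F'_\infty$ as in the entropy framework, where the nonsmooth $F(s)=a|1-s|$ has $F'_\infty=a$). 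This splits the dual value into an integral of $\varphi_i$ against $\widetilde\mu_i$ plus a mass-discrepancy term bounded by $a|\mu_i-\widetilde\mu_i|$, matching the two penalty terms in $C(\widetilde\mu,\widetilde\nu)$.

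It then remains to show $\int_X\varphi_1\,d\widetilde\mu+\int_X\varphi_2\,d\widetilde\nu\leq bW_1(\widetilde\mu,\widetilde\nu)$. This is where the constraint $\varphi_1(x)+\varphi_2(y)\leq b\,d(x,y)$ enters: integrating this inequality against the optimal plan $\pi$ (whose marginals are $\widetilde\mu/|\widetilde\mu|$ and $\widetilde\nu/|\widetilde\nu|$ up to the mass normalization) gives
\[
\int_X\varphi_1\,d\widetilde\mu+\int_X\varphi_2\,d\widetilde\nu
=|\widetilde\mu|\int_{X\times X}\big(\varphi_1(x)+\varphi_2(y)\big)\,d\pi(x,y)
\leq |\widetilde\mu|\int_{X\times X}b\,d(x,y)\,d\pi(x,y)=bW_1(\widetilde\mu,\widetilde\nu),
\]
using $|\widetilde\mu|=|\widetilde\nu|$ and that $\pi$ is optimal for $W_1$. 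Summing the three bounds yields $\sum_i\int_X I(\varphi_i)\,d\mu_i\leq C(\widetilde\mu,\widetilde\nu)$, and taking the infimum over competitors and then the supremum over $\Phi_W$ gives the claim. The main technical obstacle I anticipate is the rigorous bookkeeping of the Lebesgue decomposition when $\widetilde\mu_i$ has a part singular to $\mu_i$: one must verify that the singular contribution is correctly controlled by the $F'_\infty=a$ term (equivalently, by $a|\mu_i-\widetilde\mu_i|$), which is precisely the content of the entropy functional $\cF$ in Theorem \ref{T-duality of entropy functional}; invoking that theorem with $F(s)=a|1-s|$ (so $F^\circ=I$) is the cleanest route and sidesteps the hand computation.
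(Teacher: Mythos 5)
Your proposal is correct and follows essentially the same route as the paper: bound $I(\varphi_i)$ pointwise by $s\varphi_i+a|1-s|$ at the Radon--Nikodym density and integrate the constraint $\varphi_1(x)+\varphi_2(y)\leq b\,d(x,y)$ against the transport plan. The only difference is that the paper avoids your singular-part bookkeeping entirely by invoking Proposition \ref{P-less measures for general Wassertein spaces} to choose the optimal pair with $\gamma_i\leq\mu_i$, so the densities $f_i$ lie in $[0,1]$ and no appeal to $F'_\infty$ or to Theorem \ref{T-duality of entropy functional} is needed for this direction.
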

\begin{proof}
Let $\mu_1,\mu_2\in \cM(X)$. Let $\left(\gamma_1,\gamma_2\right)$ be an optimal for $W^{a,b}_1(\mu_1,\mu_2)$ such that $\vert \gamma_1\vert =\vert \gamma_2\vert$ and $\gamma_i\leq \mu_i,i=1,2$. Then $W^{a,b}_1\left(\mu_1,\mu_2\right)=a\left\vert \mu_1-\gamma_1\right\vert+a\left\vert \mu_2-\gamma_2\right\vert+b\,W_1\left(\gamma_1,\gamma_2\right).$

Let $\pi$ be an optimal transference between $\gamma_1$ and $\gamma_2$. We define $\gamma :=\vert \gamma_1\vert\pi$ then $\gamma_1,\gamma_2$ are the marginals of $\gamma$ and $W_1\left(\gamma_1,\gamma_2\right)=\int_{X\times X}d(x,y)d\gamma(x,y) .$

For each $i\in\{1,2\}$, since $\gamma_i\leq \mu_i$, by Radon-Nikodym theorem we get that there exists a measurable function $f_i:X\rightarrow [0,1]$ such that $\gamma_i=f_i\mu_i$. From this, we have $$a\left\vert \mu_i-\gamma_i\right\vert = \int_{X}a\left(1-f_i(x)\right)d\mu_i(x).$$
Now, for every $\left(\varphi_1,\varphi_2\right)\in\Phi_W$ we get that \begin{align*}
W^{a,b}_1\left(\mu_1,\mu_2\right) & = \sum_i\int_{X}a\left(1-f_i(x)\right)d\mu_i(x)+b\int_{X\times X} d(x,y)d\gamma(x,y)\\
& \geq \sum_i\int_{X}a\left(1-f_i(x)\right)d\mu_i(x) + \int_{X\times X} \left(\varphi_1(x)+\varphi_2(y)\right)d\gamma(x,y)\\
& = \sum_i\int_{X}\left(a\left(1-f_i(x)\right)+f_i(x)\varphi_i(x)\right)d\mu_i(x).
\end{align*}
Furthermore, as $0\leq f_i(x)\leq 1$ for all $x\in X$, we have $$I(\varphi_i(x))=\inf_{s\geq 0}\left(s\varphi_i(x)+a\vert1-s\vert\right)\leq f_i(x)\varphi_i(x)+a\left(1-f_i(x)\right),\;i=1,2.$$
Therefore, $W^{a,b}_1\left(\mu_1,\mu_2\right)\geq  \sum\limits_i \int_X I\left(\varphi_i(x)\right) d\mu_i(x),
\text{ for all }\left(\varphi_1,\varphi_2\right)\in\Phi_W.$
\end{proof}
\begin{lemma}
\label{L-duality for generalized Wassertein spaces}
If $X$ is a Polish metric space then for every $\mu_1,\mu_2\in \mathcal{M}(X)$ we have \begin{align*}
W^{a,b}_1\left(\mu_1,\mu_2\right) = \inf_{\gamma \in M}\left\lbrace a\left\vert \mu_1-\gamma_1\right\vert+a\left\vert \mu_2-\gamma_2\right\vert+b\int_{X\times X}d(x,y)d\gamma(x,y)\right\rbrace,
\end{align*}
where $\gamma_1,\gamma_2$ are the marginals of $\gamma$ and $M=\left\lbrace\gamma\in \mathcal{M}(X\times X)\mid \displaystyle\int_{X\times X}d(x,y)d\gamma(x,y)<\infty\right\rbrace$.
\end{lemma}
\begin{proof}
For any $\gamma\in M$, let $\gamma_1$ and $\gamma_2$ be the marginals of $\gamma$. Then $\vert \gamma_1\vert=\vert \gamma_2\vert$ and $\gamma_i\in \mathcal{M}_1(X), i=1,2$. Therefore \begin{align*}
a\left\vert \mu_1-\gamma_1\right\vert+a\left\vert \mu_2-\gamma_2\right\vert+b\int_{X\times X} d(x,y)d\gamma(x,y) & \geq a\left\vert \mu_1-\gamma_1\right\vert+a\left\vert \mu_2-\gamma_2\right\vert+b\,W_1\left(\gamma_1,\gamma_2\right)\\
& \geq W^{a,b}_1\left(\mu_1,\mu_2\right).
\end{align*}
So $
\inf_{\gamma \in M}\left\{a\left\vert \mu_1-\gamma_1\right\vert+a\left\vert \mu_2-\gamma_2\right\vert+b\int_{X\times X}d(x,y)d\gamma(x,y)\right\}\geq W^{a,b}_1\left(\mu_1,\mu_2\right).$

Conversely, let $\left(\widetilde{\mu}_1,\widetilde{\mu}_2\right)\in \cM_1(X)\times \cM_1(X)$ be an optimal for $W^{a,b}_1\left(\mu_1,\mu_2\right)$ and let $\widetilde{\pi}$ be an optimal transference plan between $\widetilde{\mu}_1$ and $\widetilde{\mu}_2$. Then we get that
$$W^{a,b}_1\left(\mu_1,\mu_2\right)=a\left\vert\mu_1-\widetilde{\mu}_1\right\vert+a\left\vert\mu_2-\widetilde{\mu}_2\right\vert+b\vert \widetilde{\mu}_1\vert\int_{X\times X}d(x,y)d\widetilde{\pi}(x,y).$$
We now define $\widetilde{\gamma} := \vert \widetilde{\mu}_1\vert\widetilde{\pi}$ then $\widetilde{\mu}_1,\widetilde{\mu}_2$ are the marginals of $\widetilde{\gamma}$ and $$\vert \widetilde{\mu}_1\vert\int_{X\times X}d(x,y)d\widetilde{\pi}(x,y)=\int_{X\times X}d(x,y)d\widetilde{\gamma}(x,y).$$
Thus, \begin{align*}
W^{a,b}_1\left(\mu_1,\mu_2\right)&=a\left\vert\mu_1-\widetilde{\mu}_1\right\vert+a\left\vert\mu_2-\widetilde{\mu}_2\right\vert+b\int_{X\times X}d(x,y)d\widetilde{\gamma}(x,y)\notag\\
& \geq \inf_{\gamma \in M}\left\{a\left\vert \mu_1-\gamma_1\right\vert+a\left\vert \mu_2-\gamma_2\right\vert+b\int_{X\times X}d(x,y)d\gamma(x,y)\right\}.
\end{align*}
\end{proof}
\begin{remark}
\label{R-duality for generalized Wassertein spaces}
From Proposition \ref{P-less measures for general Wassertein spaces} we also have 
\begin{align*}
W^{a,b}_1\left(\mu_1,\mu_2\right) = \inf_{\gamma \in M^\leq(\mu_1,\mu_2)}\left\lbrace a\left\vert \mu_1-\gamma_1\right\vert+a\left\vert \mu_2-\gamma_2\right\vert+b\int_{X\times X}d(x,y)d\gamma(x,y)\right\rbrace,
\end{align*}
where $M^\leq(\mu_1,\mu_2)=\left\lbrace\gamma\in M: \gamma_i\leq \mu_i, i=1,2\right\rbrace$.
\end{remark}
\begin{proof}[Proof of Theorem  \ref{T-duality of generalized Wasserstein spaces}]

We will prove this theorem in two steps. In the first step, we consider $X$ is compact. We will prove for a general Polish metric space $X$ in step 2.

\textit{Step 1. $X$ is a compact metric space.}\\ 
For any $\mu_1,\mu_2\in \mathcal{M}(X)$, using lemma \ref{L-duality for generalized Wassertein spaces} we obtain
\begin{align}\label{3.5}
W^{a,b}_1\left(\mu_1,\mu_2\right) = \inf_{\gamma \in M}\left\{a\left\vert \mu_1-\gamma_1\right\vert+a\left\vert \mu_2-\gamma_2\right\vert+b\int_{X\times X}d(x,y)d\gamma(x,y)\right\}.
\end{align}
where $\gamma_1,\gamma_2$ are the marginals of $\gamma$.\\
For each $i\in\{1,2\}$, let $\gamma_i=f_i\mu_i+\gamma_i^\perp $ be the Lebesgue decomposition of $\gamma_i$ with respect to $\mu_i$. Then we have \begin{align*}
\left\vert \mu_i-\gamma_i\right\vert & = \left\vert \left(f_i-1\right)\mu_i+\gamma_i^\perp\right\vert = \int_{X}\left\vert 1-f_i\right\vert d\mu_i+\gamma_i^\perp(X).
\end{align*}
Applying theorem \ref{T-duality of entropy functional} with $F(s)=a\vert 1-s\vert, F^o(\varphi)=I(\varphi)$ and $\cF(\gamma_i\vert \mu_i)=a\left\vert \mu_i-\gamma_i\right\vert$ we get that 
\begin{align*}
a\left\vert \mu_i-\gamma_i\right\vert = \sup \left\{\int_X I\left(\varphi_i(x)\right)d\mu_i(x)-\int_X\varphi_i(x)d\gamma_i(x) \mid\varphi_i, I\left(\varphi_i\right)\in C_b(X)\right\}. 
\end{align*}
Observe that, for every $\varphi\in \mathbb{R}$ we have \begin{align*}
I(\varphi)=\inf_{s\geq 0}\left(s\varphi+a\vert 1-s\vert\right)= \left\lbrace\begin{array}{cl}
a &\text{ if } \varphi>a\\
\varphi &\text{ if } -a\leq \varphi\leq a\\
-\infty &\text{ otherwise }
\end{array}\right..
\end{align*}
It implies that \begin{align}\label{3.6}
a\left\vert \mu_i-\gamma_i\right\vert = \sup \left\{\int_X I\left(\varphi_i(x)\right)d\mu_i(x)-\int_X\varphi_i(x)d\gamma_i(x) \mid\varphi_i\in C_b(X)\text{ and } \varphi_i(x)\geq -a,\;\forall x\in X\right\}.
\end{align}
Since (\ref{3.5}) and (\ref{3.6}) we obtain
\begin{align*}
W_1^{a,b}(\mu_1,\mu_2)=\inf\limits_{\gamma\in M}\sup\limits_{(\varphi_1,\varphi_2)\in\Phi} \left\lbrace\sum\limits_i \int_X I (\varphi_i(x))d\mu_i(x)+\int_{X\times X} \left(b.d(x,y)-\varphi_1(x)-\varphi_2(y)\right)d\gamma(x,y)\right\rbrace,
\end{align*}
where $\Phi:= \left\lbrace\left\lbrace\varphi_1,\varphi_2\right)\in C_b(X)\times C_b(X)\mid \varphi_i(x)\geq -a,\;\forall x\in X, i=1,2\right\rbrace$. We define the function $L:M\times \Phi\to \R$ by
\begin{align*}
L\left(\gamma,\left(\varphi_1,\varphi_2\right)\right)=\sum\limits_i \int_X I (\varphi_i(x))d\mu_i(x)+\int_{X\times X} \left(b.d(x,y)-\varphi_1(x)-\varphi_2(y)\right)d\gamma(x,y),
\end{align*}
for every $\gamma\in M$ and $\left(\varphi_1,\varphi_2\right)\in\Phi$. Recall that a function $g:\Phi\to \R$ is concave if $g(tx+(1-t)y)\geq tg(x)+(1-t)g(y)$ for every $x,y\in \Phi,t\in [0,1]$.
It is clear that $L\left(\cdot,\left(\varphi_1,\varphi_2\right)\right)$ is convex, and $L(\gamma,\cdot)$ is concave as $I\left(\varphi_i\right)$ is concave. Observe that $\varphi_i\in C_b(X)$ and using \cite[Lemma 4.3]{V09} we obtain  $L\left(\cdot,\left(\varphi_1,\varphi_2\right)\right)$ is lower semicontinuous in $M$ endowed with the weak*- topology. 

Next, we will estimate $\inf\limits_{\gamma\in M}L\left(\gamma,\left(\varphi_1,\varphi_2\right)\right)$ for every $(\varphi_1,\varphi_2)\in \Phi$.

(1) If $\varphi_1(x)+\varphi_2(y)\leq b.d(x,y)$ for all $x,y\in X$ then $$\inf_{\gamma \in M}\int_{X\times X}\left(b.d(x,y)-\varphi_1(x)-\varphi_2(y)\right)d\gamma(x,y) \geq 0.$$
Furthermore, if let $\gamma$ be the null measure, i.e. $\gamma (A)=0$ for every Borel subset $A$ of $X$, then $\int_{X\times X}\left(b.d(x,y)-\varphi_1(x)-\varphi_2(y)\right)d\gamma(x,y) = 0.$
Therefore, $$\inf\limits_{\gamma\in M}L\left(\gamma,\left(\varphi_1,\varphi_2\right)\right) = \sum\limits_i \int_X I (\varphi_i(x))d\mu_i(x).$$

(2) If there exist $x_0,y_0\in X$ such that $\varphi_1\left(x_0\right)+\varphi_2\left(y_0\right)>b.d\left(x_0,y_0\right)$ then we choose $\gamma = \lambda\delta_{\left(x_0,y_0\right)}$ for $\lambda >0$. Then $$L\left(\gamma,\left(\varphi_1,\varphi_2\right)\right)=\sum\limits_i \int_X I (\varphi_i(x))d\mu_i(x)+\lambda\left(b.d\left(x_0,y_0\right)-\varphi_1\left(x_0\right)-\varphi_2\left(y_0\right)\right).$$
Let $\lambda \rightarrow \infty$ we get $
\inf\limits_{\gamma\in M}L\left(\gamma,\left(\varphi_1,\varphi_2\right)\right) = -\infty.$

Therefore,
\begin{align*}
\inf\limits_{\gamma\in M}L\left(\gamma,\left(\varphi_1,\varphi_2\right)\right) = \left\{\begin{array}{cl}
\sum\limits_i \int_X I (\varphi_i)d\mu_i & \text{ if }\varphi_1(x)+\varphi_2(y)\leq b.d(x,y),\;\forall x,y\in X\\
-\infty &\text{ otherwise }
\end{array}\right..
\end{align*}
Hence
$\sup\limits_{\left(\varphi_1,\varphi_2\right)\in \Phi}\inf \limits_{\gamma \in M}L(\gamma, \varphi)=\sup\limits_{\left(\varphi_1,\varphi_2\right)\in \Phi_W}\sum \limits_{i}\int_X I\left(\varphi_i(x)\right)d\mu_i(x).$
So we only need to check that
\begin{align*}
\inf\limits_{\gamma\in M}\sup\limits_{(\varphi_1,\varphi_2)\in \Phi} L(\gamma, \varphi)=\sup\limits_{(\varphi_1,\varphi_2)\in \Phi} \inf\limits_{\gamma\in M}L(\gamma, \varphi).
\end{align*}
As we always have $\inf\limits_{\gamma\in M}\sup\limits_{(\varphi_1,\varphi_2)\in \Phi} L(\gamma, \varphi)\geq\sup\limits_{(\varphi_1,\varphi_2)\in \Phi} \inf\limits_{\gamma\in M}L(\gamma, \varphi)$, we only need to check for the case $\sup\limits_{(\varphi_1,\varphi_2)\in \Phi} \inf\limits_{\gamma\in M}L(\gamma, \varphi)$ is finite.
We choose $C>\sup\limits_{(\varphi_1,\varphi_2)\in \Phi} \inf\limits_{\gamma\in M}L(\gamma, \varphi)$ and the constant function  $\widetilde{\varphi}=\left(-a/2,-a/2\right)\in \Phi$. Then we get that 
\begin{align*}
L\left(\gamma,\widetilde{\varphi}\right)& = I(-a/2)\left(\mu_1(X)+\mu_2(X)\right) +b\int_{X\times X} d(x,y)d\gamma(x,y) + a.\gamma (X\times X)\\
& = -\dfrac{a}{2}\left(\mu_1(X)+\mu_2(X)\right) +b\int_{X\times X} d(x,y)d\gamma(x,y) + a.\gamma (X\times X).
\end{align*}
Hence the set 
$P:= \left\lbrace\gamma\in M: L\left(\gamma,\widetilde{\varphi}\right)\leq C\right\rbrace $ is bounded in the sense that there exists $K>0$ such that $\gamma(X\times X)\leq K$ for every $\gamma\in P$.
As $X$ is compact, the set $P$ is compact in the weak*-topology. Therefore, using \cite[Theorem 2.4]{Liero} we obtain 
\begin{align*}
\inf\limits_{\gamma\in M}\sup\limits_{\left(\varphi_1,\varphi_2\right)\in \Phi} L\left(\gamma, \left(\varphi_1,\varphi_2\right)\right)=\sup\limits_{\left(\varphi_1,\varphi_2\right)\in \Phi}\inf\limits_{\gamma\in M} L\left(\gamma, \left(\varphi_1,\varphi_2\right)\right).
\end{align*}
It implies that \begin{align}\label{F-step 1}
\inf_{\gamma \in M}\left\{a\left\vert \mu_1-\gamma_1\right\vert+a\left\vert \mu_2-\gamma_2\right\vert+b\int_{X\times X}d(x,y)d\gamma(x,y)\right\} = \sup\limits_{\left(\varphi_1,\varphi_2\right)\in \Phi_W}\sum \limits_{i}\int_X I\left(\varphi_i(x)\right)d\mu_i(x).
\end{align}
Hence, the proof of step 1 is completed.\\\\
\textit{Step 2. $X$ is a Polish metric space.}\\
Since $\mu_1,\mu_2\in \cM(X)$, for every $\varepsilon>0$, there exist a compact set $X_0\subset X$ such that $$\mu_i\left(X\backslash X_0\right)\leq \varepsilon,\,i=1,2.$$
We define $\mu_i^*:=\mu_{i\mid X_0},\,i=1,2$, i.e. for all Borel subset $A$ of $X$, $\mu_i^*(A)=\mu_i\left(A\cap X_0\right)$. We choose an optimal $\left(\overline{u}_1,\overline{u}_2\right)$ for $W^{a,b}_1\left(\mu_1^*,\mu_2^*\right)$ such that $\vert \overline{u}_1\vert=\vert \overline{u}_2\vert$ and $\overline{u}_i\leq \mu_i^*,\,i=1,2$. Then $$W^{a,b}_1\left(\mu_1^*,\mu_2^*\right)=a\left\vert \mu_1^*-\overline{u}_1\right\vert+a\left\vert \mu_2^*-\overline{u}_2\right\vert+b\,W_1\left(\overline{u}_1,\overline{u}_2\right).$$
Moreover, for each $i\in\{1,2\}$ we have \begin{align*}
\left\vert \mu_i^*-\overline{u}_i\right\vert & = \mu_i^*(X)-\overline{u}_i(X)\\
& = \mu_i\left(X_0\right)-\overline{u}_i(X)\\
& = \mu_i\left(X\right)-\overline{u}_i(X)-\mu_i\left(X\backslash X_0\right)\\
& \geq \left\vert \mu_i-\overline{u}_i\right\vert-\varepsilon.
\end{align*}
Therefore, \begin{align*}
W^{a,b}_1\left(\mu_1^*,\mu_2^*\right)& \geq a\left\vert \mu_1-\overline{u}_1\right\vert+a\left\vert \mu_2-\overline{u}_2\right\vert+b\,W_1\left(\overline{u}_1,\overline{u}_2\right)-2a\varepsilon\notag\\
& \geq W^{a,b}_1\left(\mu_1,\mu_2\right) - 2a\varepsilon . 
\end{align*}
On the other hand, using lemma \ref{L-duality for generalized Wassertein spaces} we get that $$W^{a,b}_1\left(\mu_1^*,\mu_2^*\right) = \inf_{\gamma \in M}\left\lbrace a\left\vert \mu_1^*-\gamma_1\right\vert+a\left\vert \mu_2^*-\gamma_2\right\vert+b\int_{X\times X}d(x,y)d\gamma(x,y)\right\rbrace,$$
where $\gamma_1,\gamma_2$ are the marginals of $\gamma$ and $M=\left\lbrace\gamma\in \mathcal{M}(X\times X)\mid \displaystyle\int_{X\times X}d(x,y)d\gamma(x,y)<\infty\right\rbrace$. Let $M^*=\left\lbrace \gamma\in M\mid \gamma \left((X\times X)\backslash \left(X_0\times X_0\right)\right)=0\right\rbrace$ then $M^*\subset M$ and thus, 

\begin{align*}
W^{a,b}_1\left(\mu_1^*,\mu_2^*\right) & \leq \inf_{\gamma \in M^*}\left\lbrace a\left\vert \mu_1^*-\gamma_1\right\vert+a\left\vert \mu_2^*-\gamma_2\right\vert+b\int_{X\times X}d(x,y)d\gamma(x,y)\right\rbrace\notag\\
& = \inf_{\gamma \in M^*}\left\lbrace a\left\vert\mu_1^*-\gamma_1\right\vert\left(X_0\right)+a\left\vert \mu_2^*-\gamma_2\right\vert\left(X_0\right)+b\int_{X_0\times X_0}d(x,y)d\gamma(x,y)\right\rbrace .
\end{align*} 
Since $X_0$ is compact, using the identity (\ref{F-step 1}) in step 1, we obtain
 \begin{align*}
\inf_{\gamma \in M^*}\left\lbrace a\left\vert\mu_1^*-\gamma_1\right\vert\left(X_0\right)+a\left\vert \mu_2^*-\gamma_2\right\vert\left(X_0\right)+b\int_{X_0\times X_0}d(x,y)d\gamma(x,y)\right\rbrace = \sup\limits_{\left(\varphi_1,\varphi_2\right)\in \Phi_W^*}\sum \limits_{i}\int_{X_0} I\left(\varphi_i\right)d\mu_i^*,
\end{align*}
where $$\Phi_W^* = \left\lbrace \left(\varphi_1,\varphi_2\right)\in C_b\left(X_0\right)\times C_b\left(X_0\right)\mid \varphi_1(x)+\varphi_2(y)\leq b.d(x,y)\text{ and }\varphi_1(x),\varphi_2(y)\geq -a,\,\forall x,y\in X_0\right\rbrace.$$
In addition, there exists $\left(\varphi_1^*,\varphi_2^*\right)\in\Phi_W^*$ such that\begin{align*}
\sum \limits_{i}\int_{X_0} I\left(\varphi_i^*(x)\right)d\mu_i^*(x)\geq \sup\limits_{\left(\varphi_1,\varphi_2\right)\in \Phi_W^*}\sum \limits_{i}\int_{X_0} I\left(\varphi_i(x)\right)d\mu_i^*(x)-\varepsilon.
\end{align*}
Then we get that \begin{align}\label{F-inequality of duality}
\sum \limits_{i}\int_{X_0} I\left(\varphi_i^*(x)\right)d\mu_i^*(x)\geq W^{a,b}_1\left(\mu_1,\mu_2\right) - (2a+1)\varepsilon.
\end{align}
Next, for each $x\in X$ we define $\overline{\varphi}_1(x):=\min \left\lbrace \inf_{y\in X_0}\left(b.d(x,y)-\varphi_2^*(y)\right),a\right\rbrace .$ 
As the function $X\to \R, x\mapsto \inf_{y\in X_0}\left(b.d(x,y)-\varphi_2^*(y)\right)$ is Lipschitz, we have $\overline{\varphi}_1\in C(X)$. For each $x\in X_0$, we have $\varphi_1^*(x)+\varphi_2^*(y)\leq b.d(x,y),\,\forall y\in X_0.$
So $$\varphi_1^*(x)\leq \inf_{y\in X_0}\left(b.d(x,y)-\varphi_2^*(y)\right),\,\forall x\in X_0.$$
Moreover, we also have $\varphi_1^*(x)+\varphi_2^*(x)\leq b.d(x,x)=0,\,\forall x\in X_0.$
And from $\varphi_i^*(x)\geq -a,\,\forall x\in X_0, i=1,2$, we get that $$\varphi_i^*(x)\in [-a,a],\,\forall x\in X_0,i=1,2.$$
Therefore, $\overline{\varphi}_1(x)\geq \varphi_1^*(x)$ for every $x\in X_0$. Besides that, for any $x\in X$ one has $$b.d(x,y)-\varphi_2^*(y)\geq -\varphi_2^*(y)\geq -a,\,\forall y\in X_0.$$
Thus, $\overline{\varphi}_1(x)\in [-a,a]$ for every $x\in X$.\\
Now, we define, for each $y\in X$, $\overline{\varphi}_2(y):= \inf_{x\in X}\left(b.d(x,y)-\overline{\varphi}_1(x)\right).$
Then $\overline{\varphi}_2\in C(X)$ and $$\overline{\varphi}_1(x)+ \overline{\varphi}_2(y)\leq b.d(x,y),\,\forall x,y\in X.$$
By the same arguments as above, we still have $\overline{\varphi}_2(y)\in [-a,a],\,\forall y\in X$ and $\overline{\varphi}_2\geq \varphi_2^*$ on $X_0$. Therefore $(\overline{\varphi}_1,\overline{\varphi}_2)\in \Phi_W$.

Since the function $I$ is nondecreasing, applying (\ref{F-inequality of duality}), we obtain \begin{align*}
\sum \limits_{i}\int_{X} I\left(\overline{\varphi}_i(x)\right)d\mu_i(x) &=\sum \limits_{i}\int_{X\backslash X_0} I\left(\overline{\varphi}_i(x)\right)d\mu_i(x)+ \int_{X_0}I\left(\overline{\varphi}_i(x)\right)d\mu_i^*(x)\\
& \geq -a\left(\mu_1\left(X\backslash X_0\right)+\mu_2\left(X\backslash X_0\right)\right)+\sum \limits_{i}\int_{X_0}I\left(\varphi_i^*(x)\right)d\mu_i^*(x)\\
& \geq -2a\varepsilon + W^{a,b}_1\left(\mu_1,\mu_2\right)-(2a+1)\varepsilon\\
& = W^{a,b}_1\left(\mu_1,\mu_2\right) -(4a+1)\varepsilon .
\end{align*}
Therefore, $\sup\limits_{\left(\varphi_1,\varphi_2\right)\in \Phi_W}\sum \limits_{i}\int_X I\left(\varphi_i(x)\right)d\mu_i(x)\geq W^{a,b}_1\left(\mu_1,\mu_2\right).$

Applying lemma \ref{L-duality for generalized Wassertein spaces}, we get that 
\begin{eqnarray*}
W^{a,b}_1\left(\mu_1,\mu_2\right) &=& \sup\limits_{\left(\varphi_1,\varphi_2\right)\in \Phi_W}\sum \limits_{i}\int_X I\left(\varphi_i(x)\right)d\mu_i(x)\\
&=&\sup\limits_{(\varphi_1,\varphi_2)\in\Phi_W} \sum\limits_i \int_X \inf\limits_{s\geq 0} \left(s\varphi_i(x)+a\vert 1-s\vert\right) d\mu_i(x).
\end{eqnarray*}
\end{proof}
\begin{remark}
In the case $X$ is compact, theorem \ref{T-duality of generalized Wasserstein spaces} is a special case of \cite[Theorem 4.11]{Liero} although its  statement there is slightly different from ours as they consider lower semicontinuous functions $\varphi_1,\varphi_2$. For the completeness, we present a proof for this compact case in step 1 and it follows the ideas of the proof of \cite[Theorem 4.11]{Liero}. 
\end{remark}
Let $(X,d)$ be a metric space. For a function $f:X\to \R$, we denote 
$$\|f\|_{Lip}:=\sup_{x,y\in X,x\neq y}\frac{|f(x)-f(y)|}{d(x,y)}.$$
Now, using the techniques of the proof of \cite[Theorem 1.14]{V03} and applying theorem \ref{T-duality of generalized Wasserstein spaces} we are ready to prove theorem \ref{T-flat metrics}.
\begin{proof}[Proof of Theorem \ref{T-flat metrics}]
For every $(\psi,\varphi)\in \Phi_W$, we define $\varphi^d(x):=\inf_{y\in X}[b.d(x,y)-\varphi(y)]$ for every $x\in X$. Then $\varphi^d$ is $b$-Lipschitz function and $\varphi^d(x)\in [-a,a]$ for every $x\in X$. Therefore $\varphi^d\in \bF$. Now we define $\varphi^{dd}(y):=\inf_{x\in X}[b.d(x,y)-\varphi^d(x)]$ for every $y\in X$. Then $\varphi^{dd}$ is $b$-Lipschitz and 
$$\varphi^d(x)+\varphi^{dd}(y)\leq b.d(x,y), \mbox{ for every } x,y\in X.$$
As $-a\leq \varphi^{d}(x)\leq a$ we also get that $-a\leq \varphi^{dd}(y)\leq a$ for every $y\in X$. Therefore we have $\varphi^{dd}\in \bF$ and $(\varphi^d,\varphi^{dd})\in \Phi_W$.

On the other hand, as $\psi(x)+\varphi(y)\leq b.d(x,y)$ for every $x,y\in X$ we get that 
$$\psi(x)\leq \inf_{y\in X}[b.d(x,y)-\varphi(y)]=\varphi^d(x) \mbox{ for every } x\in X.$$
Similarly, from the definitions of $\varphi^{dd}$ we also have $\varphi^{dd}(y)\geq \varphi(y)$ for every $y\in Y$. Hence 
\begin{align*}
\int_{X} I\left(\psi\right)d\mu+\int_{X} I\left(\varphi\right)d\nu \leq \int_{X} I\left(\varphi^d\right)d\mu+\int_{X} I\left(\varphi^{dd}\right)d\nu.
\end{align*}
Therefore, $$\sup_{(\psi,\varphi)\in \Phi_W}\big\{\int_{X} I\left(\psi\right)d\mu+\int_{X} I\left(\varphi\right)d\nu \big\}\leq \sup_{\varphi\in C_b(X)}\big\{\int_{X} I\left(\varphi^d\right)d\mu+\int_{X} I\left(\varphi^{dd}\right)d\nu \big\}.$$
As $\varphi^d$ is $b$-Lipschitz we get $$-\varphi^d(x)\leq \inf_{y\in X}[b.d(x,y)-\varphi^d(y)].$$
On the other hand, $\inf_{y\in X}[b.d(x,y)-\varphi^d(y)]\leq -\varphi^d(x)$. Hence $$\varphi^{dd}(x)=\inf_{y\in X}[b.d(x,y)-\varphi^d(y)]=-\varphi^d(x).$$
Thus
\begin{eqnarray*}
\sup_{(\psi,\varphi)\in \Phi_W}\big\{\int_{X} I\left(\psi\right)d\mu+\int_{X} I\left(\varphi\right)d\nu \big\}&\leq& \sup_{\varphi\in C_b(X)}\big\{\int_{X} I\left(\varphi^d\right)d\mu+\int_{X} I\left(\varphi^{dd}\right)d\nu \big\}\\
&= &\sup_{\varphi\in C_b(X)}\big\{\int_{X} I\left(\varphi^d\right)d\mu+\int_{X} I\left(-\varphi^{d}\right)d\nu \big\}\\
&\leq &\sup_{\varphi\in \bF}\big\{\int_{X} I\left(\varphi\right)d\mu+\int_{X} I\left(-\varphi\right)d\nu \big\}\\
&\leq& \sup_{(\psi,\varphi)\in \Phi_W}\big\{\int_{X} I\left(\psi\right)d\mu+\int_{X} I\left(\varphi\right)d\nu \big\}.
\end{eqnarray*}
So we must have equality everywhere and get the result.
\end{proof}
\begin{remark}
1) Theorem \ref{T-flat metrics} has been proved in \cite[Theorem 2]{PR16} for the case $a=b=1$ and $X=\R^n$ by a different method.

2) (\cite{Hanin1,Hanin2}) Let $(X,d)$ be a Polish metric space. We denote by $\cM^s(X)$ the space of all signed Borel measures with finite mass on $X$. Let $\cM^0(X)$ be the set of all $\mu\in \cM^s(X)$ such that $\mu(X)=0$. For every $\mu\in \cM^0(X)$ we denote by $\Psi_\mu$ the set of all nonnegative measures $\gamma\in \cM(X\times X)$ such that $\lambda(X\times A)-\lambda(A\times X)=\mu(A)$ for every Borel $A\subset X$. Then we define for every $\mu\in \cM^0(X)$,
$$\|\mu\|^0_d:=\inf_{\gamma\in \Psi_\mu}\big\{\int_{X\times X}d(x,y)d\gamma(x,y)\big\}.$$
Now, on the vector space $\cM^s(X)$ we define an extension Kantorovich-Rubinstein norm as following
$$\|\mu\|_d:=\inf_{\nu\in \cM^0(X)}\big\{\|\nu\|^0_d+|\mu-\nu|(X)\big\}, \mbox{ for every } \mu\in \cM^s(X).$$
Then from \cite[Theorem 0]{Hanin1} (when $X$ is compact) or \cite[Theorem 1]{Hanin2} (when $X$ is a general Polish metric space), applying Hahn-Banach theorem we get that 
$$\|\mu\|_d=\sup\big\{\int_X fd(\mu-\nu):f\in \bF\big\},$$
where $\bF:=\big\{f\in C_b(X), \|f\|_\infty\leq 1, \|f\|_{Lip}\leq 1\big\}$.
We thank Benedetto Piccoli and Francesco Rossi for pointing \cite{Hanin1} out to us, and we have found \cite{Hanin2} after that.
\end{remark}
The corollary \ref{C-geodesic space} is from theorem \ref{T-flat metrics} as following.
\begin{proof}[Proof of Corollary \ref{C-geodesic space}]
It is clear that $\left(\mathcal{M}(X), W^{a,b}_1\right)$ is complete, this follows from proposition \ref{P-completeness}. For every $\mu,\nu\in\mathcal{M}(X)$, we define $\sigma :=(\mu+\nu)/2$. Then using theorem \ref{T-flat metrics} we obtain \begin{align*}
W^{a,b}_1(\mu,\sigma) & = \sup_{f\in\bF}\int_Xfd(\mu-\sigma)\\
& = \dfrac{1}{2}\sup_{f\in\bF}\int_Xfd(\mu-\nu)\\
& = \dfrac{1}{2}W^{a,b}_1(\mu,\nu).
\end{align*}
Similarly, $W^{a,b}_1(\sigma,\nu)=\frac{1}{2}W^{a,b}_1(\mu,\nu)$. Hence, applying \cite[Theorem 2.4.16]{Burago} or \cite[Lemma 2.1]{Sturm} we get the result.
\end{proof}
Using theorem \ref{T-flat metrics} we get another proof of \cite[Lemma 5]{PRT}.
\begin{corollary}
For every $\mu,\nu,\eta\in \cM(X)$ we have 
$$W_1^{a,b}(\mu+\eta,\nu+\eta)=W^{a,b}_1(\mu,\nu).$$
\end{corollary}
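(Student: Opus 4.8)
The plan is to read off the result directly from Theorem~\ref{T-flat metrics}, using that the Kantorovich--Rubinstein dual expression depends on the pair $(\mu,\nu)$ only through the difference of their integrals against test functions. First I would apply Theorem~\ref{T-flat metrics} to the pair $(\mu+\eta,\nu+\eta)$, which gives
$$W_1^{a,b}(\mu+\eta,\nu+\eta)=\sup\Big\{\int_X f\,d\big((\mu+\eta)-(\nu+\eta)\big):f\in\bF\Big\}.$$

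Next I would fix an arbitrary $f\in\bF$ (so $f\in C_b(X)$ with $\|f\|_\infty\le a$ and $\|f\|_{Lip}\le b$) and observe that, since $f$ is bounded and each of $\mu,\nu,\eta\in\cM(X)$ has finite mass, all four integrals below are finite, so linearity of the integral applies and the $\eta$-contributions cancel:
$$\int_X f\,d(\mu+\eta)-\int_X f\,d(\nu+\eta)=\int_X f\,d\mu-\int_X f\,d\nu=\int_X f\,d(\mu-\nu).$$
Thus the integrand defining the supremum for $W_1^{a,b}(\mu+\eta,\nu+\eta)$ agrees, for every $f$ in the common index set $\bF$, with the integrand defining the supremum for $W_1^{a,b}(\mu,\nu)$.

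Taking the supremum over $f\in\bF$ and invoking Theorem~\ref{T-flat metrics} a second time, now for the pair $(\mu,\nu)$, I would conclude
$$W_1^{a,b}(\mu+\eta,\nu+\eta)=\sup_{f\in\bF}\int_X f\,d(\mu-\nu)=W_1^{a,b}(\mu,\nu),$$
which is the claim. I do not expect any genuine obstacle here: the entire content is the translation-invariance of the dual functional, and the only point requiring a word of care is the interpretation of $(\mu+\eta)-(\nu+\eta)$ as a difference of integrals of a fixed bounded continuous function against finite measures, after which the cancellation of $\eta$ is immediate from linearity.
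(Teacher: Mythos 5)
Your proof is correct and is exactly the argument the paper intends: the corollary is stated immediately after the remark ``Using theorem \ref{T-flat metrics} we get another proof of [PRT, Lemma 5]'', and the whole content is the cancellation of $\eta$ in the dual expression $\sup_{f\in\bF}\int_X f\,d(\mu-\nu)$, which you carry out with the appropriate care about finiteness of the integrals. Nothing further is needed.
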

From theorem \ref{T-flat metrics} we also get a similar result in \cite[Lemma 1.5]{Basso}.
\begin{corollary}
Let $\left(X_1,d_1\right)$ and $\left(X_2,d_2\right)$ be two Polish metric spaces. If $\psi:\left(X_1,d_1\right)\rightarrow\left(X_2,d_2\right)$ is an isometry map then the map $\psi_\sharp:\left(\mathcal{M}\left(X_1\right), W^{a,b}_1\right)\rightarrow \left(\mathcal{M}\left(X_2\right), W^{a,b}_1\right)$ is also an isometry.
\end{corollary}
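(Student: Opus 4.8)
The plan is to reduce the statement to the Kantorovich--Rubinstein duality of Theorem \ref{T-flat metrics}, applied on both $X_1$ and $X_2$, and to transport test functions across $\psi$ by the change of variables formula for push-forward measures. Fix $\mu,\nu\in\cM(X_1)$; since $|\psi_\sharp\mu|=|\mu|$ the map $\psi_\sharp$ is well defined into $\cM(X_2)$, and it suffices to prove
$$W_1^{a,b}(\psi_\sharp\mu,\psi_\sharp\nu)=W_1^{a,b}(\mu,\nu).$$
Writing $\bF(X_i)$ for the class of competitors in Theorem \ref{T-flat metrics} over $X_i$, the duality gives $W_1^{a,b}(\psi_\sharp\mu,\psi_\sharp\nu)=\sup_{g\in\bF(X_2)}\int_{X_2}g\,d(\psi_\sharp\mu-\psi_\sharp\nu)$, and using $\int_{X_2}g\,d\psi_\sharp\rho=\int_{X_1}(g\circ\psi)\,d\rho$ this becomes $\sup_{g\in\bF(X_2)}\int_{X_1}(g\circ\psi)\,d(\mu-\nu)$. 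Thus everything comes down to comparing the two families of test functions $\{g\circ\psi:g\in\bF(X_2)\}$ and $\bF(X_1)$ paired against $\mu-\nu$.

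For the inequality $W_1^{a,b}(\psi_\sharp\mu,\psi_\sharp\nu)\le W_1^{a,b}(\mu,\nu)$ I would check that pulling back preserves the two constraints: if $g\in\bF(X_2)$ then $\|g\circ\psi\|_\infty\le\|g\|_\infty\le a$, and since $\psi$ is an isometry, $|g(\psi(x))-g(\psi(y))|\le b\,d_2(\psi(x),\psi(y))=b\,d_1(x,y)$, so $g\circ\psi\in\bF(X_1)$. Hence the supremum above is taken over a subfamily of $\bF(X_1)$ and is therefore dominated by $\sup_{f\in\bF(X_1)}\int_{X_1}f\,d(\mu-\nu)=W_1^{a,b}(\mu,\nu)$.

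For the reverse inequality I would produce, for each $f\in\bF(X_1)$, a competitor $g\in\bF(X_2)$ with $g\circ\psi=f$. On the image $\psi(X_1)\subset X_2$ the function $f\circ\psi^{-1}$ is well defined and $b$-Lipschitz, again because $\psi$ is distance preserving; by the McShane--Whitney extension theorem it extends to a $b$-Lipschitz function on all of $X_2$, and truncating this extension into $[-a,a]$ keeps the Lipschitz constant $\le b$ while enforcing $\|g\|_\infty\le a$, without altering its values on $\psi(X_1)$ since $|f|\le a$ there. Then $g\in\bF(X_2)$ and $\int_{X_1}f\,d(\mu-\nu)=\int_{X_1}(g\circ\psi)\,d(\mu-\nu)\le W_1^{a,b}(\psi_\sharp\mu,\psi_\sharp\nu)$; taking the supremum over $f\in\bF(X_1)$ yields $W_1^{a,b}(\mu,\nu)\le W_1^{a,b}(\psi_\sharp\mu,\psi_\sharp\nu)$, and the two bounds give equality. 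The only nonroutine point is this last extension step, which I expect to be the main obstacle if $\psi$ is merely an isometric embedding rather than onto, since one then genuinely needs the Lipschitz extension together with the truncation to respect both $\|\cdot\|_{Lip}\le b$ and $\|\cdot\|_\infty\le a$ at once; when $\psi$ is a surjective isometry the step collapses to the choice $g=f\circ\psi^{-1}$, as $\psi^{-1}$ is then itself an isometry.
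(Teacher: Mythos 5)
Your proposal is correct and follows essentially the same route as the paper: both directions are handled via the duality of Theorem \ref{T-flat metrics}, and the key step is extending $f\circ\psi^{-1}$ from $\psi(X_1)$ to a competitor over $X_2$ --- the paper's explicit formula $h(y)=\inf_{x\in X_1}\left[b\,d_2(y,\psi(x))+f(x)\right]$ is exactly the McShane extension you invoke. The only substantive divergence is that the paper obtains the bound $\|h\|_\infty\le a$ from surjectivity of $\psi$ (which it also uses to make $\psi_\sharp$ onto), whereas your truncation argument covers non-surjective isometric embeddings as well, at the price of then only concluding that $\psi_\sharp$ is an isometric embedding.
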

\begin{proof} For every $\mu,\nu\in \cM(X)$, it is clear that $W^{a,b}_1\left(\mu,\nu\right)\geq W^{a,b}_1\left(\psi_\sharp\mu,\psi_\sharp\nu\right)$ and $\psi_\sharp$ is surjective. Hence, we need to show that $W^{a,b}_1\left(\psi_\sharp\mu,\psi_\sharp\nu\right)\geq W^{a,b}_1\left(\mu,\nu\right)$.\\
Let $\bF_i =\left\lbrace f\in C_b(X_i), \Vert f\Vert_\infty \leq a, \Vert f\Vert_{Lip}\leq b\right\rbrace, i=1,2$. By theorem \ref{T-flat metrics} one has
\begin{align*}
W_1^{a,b} \left(\psi_{\sharp}\mu, \psi_{\sharp}\nu\right)=\sup\limits_{g\in \bF_2}\int_{X_2} g\,d\left(\psi_{\sharp}\mu- \psi_{\sharp}\nu\right) = \sup\limits_{g\in \bF_2} \int_{X_1} g\circ\psi \,d\left(\mu-\nu\right).
\end{align*}
For every $f\in \bF_1$ and every $y\in X_2$ we define $h(y):= \inf\limits_{x\in X_1} \left[b.d_2\left(y,\psi (x)\right) + f(x) \right]$. Then $h$ is $b$-Lipschitz and $h(y)\geq -a$ for every $y\in X_2$. Since $\psi$ is surjective, for every $y\in X_2$, there exists $x'\in X_1$ such that $\psi(x')=y$. Thus, $h(y)\leq f(x')\leq a$. Therefore, $h\in \bF_2$. Moreover, since $f$ is $b$-Lipschitz, for every $x\in X_1$ one has 
\begin{align*}
f(x)&=\inf\limits_{x_1\in X_1} \left[ f(x_1) + \vert f(x) -f(x_1)\vert \right]\\ &\leq \inf\limits_{x_1\in X_1} \left[ f(x_1) + b.d_1(x,x_1) \right]\\ 
&= \inf\limits_{x_1\in X_1} \left[ f(x_1) + b.d_2\left(\psi(x) ,\psi( x_1)\right) \right]\\&\leq f(x).
\end{align*}
Therefore, $f(x)=h\left(\psi (x)\right)$, for all $x\in X_1$ or $f=h\circ \psi$. Hence, we get that 
\begin{align*}
\int_{X_1} f\, d(\mu-\nu) =\int_{X_1} h\circ \psi \, d(\mu-\nu) \leq \sup\limits_{g\in \bF_2}\int_{X_1} g\circ\psi \,d\left(\mu-\nu\right)= W_1^{a,b} \left(\psi_{\sharp}\mu, \psi_{\sharp}\nu\right).
\end{align*}
So that $W_1^{a,b} \left(\mu, \nu\right)\leq W_1^{a,b} \left(\psi_{\sharp}\mu, \psi_{\sharp}\nu\right)$.
\end{proof} 
Now using theorem \ref{T-flat metrics} we give another proof of theorem \ref{T-isometry} for the case $p=1$.
\begin{corollary}
Let $(X,d)$ be a Polish metric space. Then for every $a,b>0$ the push forward map $p_\sharp:(\cM^G(X),W_1^{a,b})\to (\cM(X/G),W_1^{a,b})$ is an isometry.
\end{corollary}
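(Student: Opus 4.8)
The plan is to derive this special case from the Kantorovich--Rubinstein formula of Theorem \ref{T-flat metrics}, rather than from the transport-plan manipulations used for Theorem \ref{T-isometry}. Recall that in Section 4 the map $p_\sharp:\cM^G(X)\to\cM(X/G)$ was shown to be a bijection, and part (2) of Theorem \ref{T-isometry} already gives $W_1^{a,b}(p_\sharp\mu,p_\sharp\nu)\le W_1^{a,b}(\mu,\nu)$ for all $\mu,\nu$. Hence it suffices to prove the reverse inequality $W_1^{a,b}(p_\sharp\mu,p_\sharp\nu)\ge W_1^{a,b}(\mu,\nu)$ for every $\mu,\nu\in\cM^G(X)$. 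To this end I would apply Theorem \ref{T-flat metrics} on $X$ to write $W_1^{a,b}(\mu,\nu)=\sup_{f\in\bF}\int_X f\,d(\mu-\nu)$, and then show that each test function $f\in\bF$ produces an admissible test function on $X/G$ whose integral against $p_\sharp\mu-p_\sharp\nu$ recovers $\int_X f\,d(\mu-\nu)$.

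Given $f\in\bF$, I would average it over the group, setting $f^G(x):=\int_G f(xg)\,d\lambda(g)$ with $\lambda$ the normalized Haar measure. Since $G$ is compact and acts continuously, $f^G\in C_b(X)$, and by invariance of $\lambda$ the function $f^G$ is $G$-invariant, so it factors as $f^G=\widetilde f\circ p$ for a continuous $\widetilde f$ on $X/G$; moreover $\|\widetilde f\|_\infty\le\|f\|_\infty\le a$. The step I expect to be the main obstacle is verifying that $\widetilde f$ is $b$-Lipschitz for the \emph{quotient} metric $d_{X/G}$. Because $G$ acts by isometries, for all $x,y\in X$ one has $|f^G(x)-f^G(y)|\le\int_G|f(xg)-f(yg)|\,d\lambda(g)\le b\int_G d(xg,yg)\,d\lambda(g)=b\,d(x,y)$. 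Exploiting the $G$-invariance of $f^G$ to replace $x,y$ by arbitrary $xg,yh$ and then taking the infimum over $g,h\in G$, I would obtain $|\widetilde f(x^*)-\widetilde f(y^*)|\le b\,\inf_{g,h\in G}d(xg,yh)=b\,d_{X/G}(x^*,y^*)$, so that $\widetilde f$ lies in the corresponding class $\bF$ on $X/G$.

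Finally, I would use the $G$-invariance of $\mu$ together with Fubini's theorem: since the push-forward of $\mu$ under $x\mapsto xg$ equals $\mu$, one has $\int_X f^G\,d\mu=\int_G\big(\int_X f(xg)\,d\mu(x)\big)d\lambda(g)=\int_X f\,d\mu$, and by definition of push-forward $\int_X f^G\,d\mu=\int_{X/G}\widetilde f\,d(p_\sharp\mu)$, with the analogous identities for $\nu$. Combining these, $\int_X f\,d(\mu-\nu)=\int_{X/G}\widetilde f\,d(p_\sharp\mu-p_\sharp\nu)\le W_1^{a,b}(p_\sharp\mu,p_\sharp\nu)$ by Theorem \ref{T-flat metrics} applied on the Polish space $(X/G,d_{X/G})$. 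Taking the supremum over $f\in\bF$ yields $W_1^{a,b}(\mu,\nu)\le W_1^{a,b}(p_\sharp\mu,p_\sharp\nu)$, which together with Theorem \ref{T-isometry}(2) gives the equality $W_1^{a,b}(\mu,\nu)=W_1^{a,b}(p_\sharp\mu,p_\sharp\nu)$; since $p_\sharp$ is a bijection onto $\cM(X/G)$, it is an isometry.
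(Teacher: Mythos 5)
Your proposal is correct and follows essentially the same route as the paper's own proof: both establish the reverse inequality by averaging a test function $f\in\bF$ over $G$ with the normalized Haar measure, factoring the resulting $G$-invariant function through $p$, verifying the sup-norm and $b$-Lipschitz bounds for the quotient metric via the isometry of the action, and using Fubini together with $G$-invariance of the measures to identify $\int_X f\,d(\mu-\nu)$ with $\int_{X/G}\widetilde f\,d(p_\sharp\mu-p_\sharp\nu)$ before invoking Theorem \ref{T-flat metrics} on $X/G$. The only cosmetic difference is that the paper picks representatives attaining $d_{X/G}(x^*,y^*)$ (using compactness of $G$) where you take an infimum over $g,h$, which is equivalent.
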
 
\begin{proof}
From part 1) of theorem \ref{T-isometry} we know that $W_1^{a,b}(p_\sharp\mu,p_\sharp\nu)\leq W_1^{a,b}(\mu,\nu)$ for every $\mu,\nu\in \cM(X)$.

Now we prove that for every $\mu_1,\mu_2\in \cM_1^G(X)$, we have $W_1^{a,b}(\mu_1,\mu_2)\leq W_1^{a,b}(p_\sharp\mu_1,p_\sharp\mu_2)$. Let $\mu_1,\mu_2\in \cM_1^G(X)$. Recall that 
$\bF:=\{f\in C_b(X):\|f\|_\infty\leq a, \|f\|_{Lip}\leq b\}$ and we define
$$\bF^*:=\{f\in C_b(X/G):\|f\|_\infty\leq a, \|f\|_{Lip}\leq b\}.$$ 
For every $f\in \bF$, the map $f^1:X\to \R,$ defined by $f^1(x)=\int_Gf(xg)d\lambda(g)$ is well defined and $f^1(xh)=f(x)$ for every $x\in X,h\in G$ and hence we can define the map $f^*:X/G\to \R$ by $f^*(p(x))=f^1(x)$ for every $x\in X$. It is clear that $f^*\in C_b(X)$ and $\|f^*\|_\infty\leq a$. Now we check that $\|f^*\|_{Lip}\leq b$. For every $x^*,y^*\in X/G$ with $x^*\neq y^*$ there exist $x_0\in x^*,y_0\in y^*$ such that $d(x_0,y_0)=d^*(x^*,y^*)$. As the action is isometry and $f$ is $b$-Lipschitz, for every $x,y\in X$ we have 
$
|f^1(x)-f^1(y)|\leq \int_G|f(xg)-f(yg)|d\lambda(g)\leq b.d(x,y).$
Therefore, $$\frac{|f^*(x^*)-f^*(y^*)|}{d(x^*,y^*)}=\frac{|f^1(x_0)-f^1(y_0)|}{d(x_0,y_0)}\leq b.$$
Hence $\|f^*\|_{Lip}\leq b$ and therefore $f^*\in \bF^*$. On the other hand, as $\mu_1,\mu_2\in \cM_1^G(X)$, one has
\begin{eqnarray*}
\int_{X/G}f^*(x^*)d(p_\sharp \mu_1-p_\sharp\mu_2)(x^*)&=&\int_Xf^1(x)d(\mu_1-\mu_2)(x)\\
&=&\int_X\int_Gf(xg)d\lambda(g)d(\mu_1-\mu_2)(x)\\
&=&\int_G\int_Xf(xg)d(\mu_1-\mu_2)(x)d\lambda(g)\\
&=&\int_Xf(x)d(\mu_1-\mu_2)(x).
\end{eqnarray*}
Therefore, applying theorem \ref{T-flat metrics} we get that $W_1^{a,b}(\mu_1,\mu_2)\leq W_1^{a,b}(p_\sharp\mu_1,p_\sharp\mu_2).$
\end{proof}

For every $\mu_1,\mu_2\in \cM(X)$ we denote by $\Opt^{a,b}(\mu_1,\mu_2)$ the set of all $\gamma\in \cM(X\times X)$ such that $\int_{X\times X}d(x,y)d\gamma(x,y)<\infty$, $\gamma_i\leq \mu_i$, $i=1,2$, and 
$$W_1^{a,b}(\mu_1,\mu_2)=a|\mu_1-\gamma_1|+a|\mu_2-\gamma_2|+b\int_{X\times X}d(x,y)d\gamma(x,y),$$
where $\gamma_1,\gamma_2$ are the marginals of $\gamma$.

\begin{lemma}\label{L-existence of optimal plan}
Let $(X,d)$ be a Polish metric space. For every $\mu_1,\mu_2\in \cM(X)$ the set $\Opt^{a,b}(\mu_1,\mu_2)$ is a nonempty, convex and compact subset of $\cM(X\times X)$. 
\end{lemma}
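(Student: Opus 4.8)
The plan is to establish the three assertions separately, with the overwhelming bulk of the work going into compactness. Throughout, write
$$\Phi(\gamma):=a\left\vert\mu_1-\gamma_1\right\vert+a\left\vert\mu_2-\gamma_2\right\vert+b\int_{X\times X}d(x,y)\,d\gamma(x,y)$$
for $\gamma\in M^\leq(\mu_1,\mu_2)$, so that by Remark \ref{R-duality for generalized Wassertein spaces} one has $\Phi(\gamma)\geq W_1^{a,b}(\mu_1,\mu_2)$ for every such $\gamma$, and $\Opt^{a,b}(\mu_1,\mu_2)$ is precisely the set of $\gamma\in M^\leq(\mu_1,\mu_2)$ at which this lower bound is attained. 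For nonemptiness I would invoke Proposition \ref{P-less measures for general Wassertein spaces} to choose an optimal pair $(\widetilde{\mu},\widetilde{\nu})$ for $W_1^{a,b}(\mu_1,\mu_2)$ with $\vert\widetilde{\mu}\vert=\vert\widetilde{\nu}\vert$, $\widetilde{\mu}\leq\mu_1$, $\widetilde{\nu}\leq\mu_2$. Since $X$ is Polish, $\Opt_1(\widetilde{\mu},\widetilde{\nu})$ is nonempty; picking an optimal transference plan $\pi$ and setting $\gamma:=\vert\widetilde{\mu}\vert\pi$ produces marginals $\gamma_1=\widetilde{\mu}$, $\gamma_2=\widetilde{\nu}$ with $\int_{X\times X}d\,d\gamma=W_1(\widetilde{\mu},\widetilde{\nu})<\infty$, whence $\Phi(\gamma)=C(\widetilde{\mu},\widetilde{\nu})=W_1^{a,b}(\mu_1,\mu_2)$ and $\gamma\in\Opt^{a,b}(\mu_1,\mu_2)$.

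Convexity follows from the affinity of $\Phi$ on $M^\leq(\mu_1,\mu_2)$: when $\gamma_i\leq\mu_i$ one has $\vert\mu_i-\gamma_i\vert=\vert\mu_i\vert-\vert\gamma_i\vert$, which is affine in $\gamma_i$, while the marginal maps and the cost integral are linear in $\gamma$. Hence for $\gamma,\gamma'\in\Opt^{a,b}(\mu_1,\mu_2)$ and $t\in[0,1]$ the combination $t\gamma+(1-t)\gamma'$ again has marginals dominated by $\mu_i$, lies in $M^\leq(\mu_1,\mu_2)$, and satisfies $\Phi\big(t\gamma+(1-t)\gamma'\big)=t\Phi(\gamma)+(1-t)\Phi(\gamma')=W_1^{a,b}(\mu_1,\mu_2)$.

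The core is compactness, which I would split into relative compactness plus closedness in the weak*-topology (metrizable, as $X\times X$ is Polish). Boundedness is immediate, since $\vert\gamma\vert=\vert\gamma_1\vert\leq\vert\mu_1\vert$. For tightness I would use that the finite measures $\mu_1,\mu_2$ are each tight on the Polish space $X$: given $\varepsilon>0$, pick compacts $K_1,K_2\subset X$ with $\mu_i(X\setminus K_i)\leq\varepsilon/2$, so that for every $\gamma\in\Opt^{a,b}(\mu_1,\mu_2)$ one has $\gamma\big((X\times X)\setminus(K_1\times K_2)\big)\leq\gamma_1(X\setminus K_1)+\gamma_2(X\setminus K_2)\leq\varepsilon$. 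Prokhorov's theorem then yields relative compactness. For closedness, take $\gamma^{(n)}\in\Opt^{a,b}(\mu_1,\mu_2)$ with $\gamma^{(n)}\to\gamma$ weak*. The coordinate projections are continuous, so $\gamma_i^{(n)}\to\gamma_i$ weak*, and testing $\mu_i-\gamma_i^{(n)}\geq0$ against nonnegative $\phi\in C_b(X)$ passes to the limit to give $\gamma_i\leq\mu_i$. Testing against the constant function shows the marginal masses converge, so $\vert\mu_i-\gamma_i^{(n)}\vert\to\vert\mu_i-\gamma_i\vert$, while lower semicontinuity of $\gamma\mapsto\int d\,d\gamma$ under weak* convergence (as in \cite[Lemma 4.3]{V09}, $d$ being nonnegative and continuous) gives $\int d\,d\gamma\leq\liminf_n\int d\,d\gamma^{(n)}$. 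Combining these, $\Phi(\gamma)\leq\liminf_n\Phi(\gamma^{(n)})=W_1^{a,b}(\mu_1,\mu_2)<\infty$; finiteness forces $\int d\,d\gamma<\infty$, hence $\gamma\in M^\leq(\mu_1,\mu_2)$, and then the reverse bound $\Phi(\gamma)\geq W_1^{a,b}(\mu_1,\mu_2)$ from Remark \ref{R-duality for generalized Wassertein spaces} forces equality, i.e. $\gamma\in\Opt^{a,b}(\mu_1,\mu_2)$.

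The main obstacle I anticipate is this closedness step. The delicate point is that the cost integral is only lower semicontinuous — because $d$ is unbounded when $X$ is unbounded — whereas the mass-defect terms must converge exactly, and one must rule out mass escaping to infinity. This is exactly where uniform tightness of the family, rather than mere boundedness, is indispensable, both to invoke Prokhorov and to guarantee that the marginal masses of the limit are not strictly smaller than the limits of the marginal masses.
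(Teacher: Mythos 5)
Your proposal is correct, and its core — compactness via tightness of the marginals dominated by $\mu_i$, Prokhorov's theorem, weak* convergence of marginals and masses, preservation of $\gamma_i\leq\mu_i$ in the limit, and lower semicontinuity of $\gamma\mapsto\int d\,d\gamma$ — is exactly the paper's argument (the paper proves the lower semicontinuity by hand via a monotone approximation of $d$ by bounded continuous $c_k$, where you cite \cite[Lemma 4.3]{V09}; the paper gets $\gamma_i\leq\mu_i$ and mass convergence from the Portmanteau theorem, where you test against nonnegative $C_b$ functions and the constant $1$ — these are interchangeable). The one genuine organizational difference is nonemptiness: the paper obtains it by running the full closedness machinery on a minimizing sequence supplied by Remark \ref{R-duality for generalized Wassertein spaces}, whereas you get it in one line from Proposition \ref{P-less measures for general Wassertein spaces} together with the nonemptiness of $\Opt_1(\widetilde{\mu},\widetilde{\nu})$, setting $\gamma=\vert\widetilde{\mu}\vert\pi$; this is shorter and is in fact already implicit in the second half of the paper's proof of Lemma \ref{L-duality for generalized Wassertein spaces}, so nothing is lost. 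You also supply the affinity argument for convexity (using $\vert\mu_i-\gamma_i\vert=\vert\mu_i\vert-\vert\gamma_i\vert$ when $\gamma_i\leq\mu_i$), which the paper dismisses as clear; this is a worthwhile addition since affinity of the mass-defect terms genuinely depends on the domination $\gamma_i\leq\mu_i$.
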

\begin{proof}
It is clear that $\Opt^{a,b}(\mu_1,\mu_2)$ is convex.

From remark \ref{R-duality for generalized Wassertein spaces}, we choose a sequence of $\gamma_n\in \cM(X\times X)$ such that $\int_{X\times X}d(x,y)d\gamma_n(x,y)<\infty$, $(\pi_i)_\sharp\gamma_n\leq \mu_i$ for every $i=1,2, n\in \N$ and 
$$a\left\vert \mu_1-(\pi_1)_\sharp\gamma_n\right\vert+a\left\vert \mu_2-(\pi_2)_\sharp\gamma_n\right\vert+b\int_{X\times X}d(x,y)d\gamma_n(x,y)\to W_1^{a,b}(\mu_1,\mu_2).$$
As $\mu_1,\mu_2\in \cM(X)$ we get that $\{(\pi_i)_\sharp\gamma_n\}_{n\in \N}$ is tight for every $i=1,2$. Therefore for every $\varepsilon>0$ there exist compact subsets $K_\varepsilon, L_\varepsilon$ of $X$ such that 
$$(\pi_1)_\sharp\gamma_n(X\setminus K_\varepsilon)<\varepsilon \mbox{ and } (\pi_2)_\sharp\gamma_n(X\setminus L_\varepsilon)<\varepsilon, \mbox{ for every } n\in \N.$$
And hence $\gamma_n(X\times X\setminus K_\varepsilon\times L_\varepsilon)\leq (\pi_1)_\sharp\gamma_n(X\setminus K_\varepsilon)+(\pi_2)_\sharp\gamma_n(X\setminus L_\varepsilon)<2\varepsilon$, for every $n\in \N$. Therefore $\{\gamma_n\}_{n\in \N}$ is tight. As $\mu_i\in \cM(X)$ and $(\pi_i)_\sharp\gamma_n\leq \mu_i$ for every $i=1,2, n\in \N$ we get that $\{\gamma_n\}_{n\in \N}$ is bounded. 
Hence applying Prokhorov's theorem, passing to a subsequence we can assume that 
$\gamma_n\to \gamma$ as $n\to \infty$ in the weak*-topology for some $\gamma\in \cM(X\times X)$.

Since the metric function $d$ is nonnegative lower semicontinuous on $X\times X$, we can write $d$ as the pointwise limit of a nondecreasing sequence of nonnegative, continuous functions  $(c_k)_{k\in\mathbb{N}}$ on $X\times X$. Replacing $c_k$ by $\min\{c_k,k\}$, we can assume that each $c_k$ is bounded. By monotone convergence, one has $\int_{X\times X}c_k(x,y)d\gamma (x,y)\to \int_{X\times X}d(x,y)d\gamma (x,y)$ as $k\to \infty$. As $c_k\leq d$ for every $k\in \mathbb{N}$, we have $\int_{X\times X}d(x,y)d\gamma_n(x,y)\geq \int_{X\times X}c_k(x,y)d\gamma_n(x,y)$. Moreover, since $c_k$ is bounded and continuous, we get that $$\liminf_{n\rightarrow \infty}\int_{X\times X}d(x,y)d\gamma_n(x,y)\geq \lim_{n\rightarrow \infty}\int_{X\times X}c_k(x,y)d\gamma_n(x,y)=\int_{X\times X}c_k(x,y)d\gamma (x,y).$$
Therefore,
$$\liminf_{n\rightarrow \infty}\int_{X\times X}d(x,y)d\gamma_n(x,y)\geq \lim_{k\rightarrow \infty}\int_{X\times X}c_k(x,y)d\gamma(x,y)=\int_{X\times X}d(x,y)d\gamma (x,y).$$
As $W_1^{a,b}(\mu_1,\mu_2)$ is finite we get that $\int_{X\times X}d(x,y)d\gamma(x,y)<\infty$. As $\gamma_n\to \gamma$ as $n\to \infty$ in the weak*-topology, applying \cite[Theorem 6.1 page 40]{Par} we also get that $$\limsup_{n\to \infty}\gamma_n(X\times X)\leq\gamma(X\times X)\leq \liminf_{n\to\infty}\gamma_n(X\times X).$$
Therefore $\gamma(X\times X)=\lim_{n\to \infty}\gamma_n(X\times X)$. 

Next we will prove that  $(\pi_i)_\sharp\gamma\leq \mu_i$ for every $i=1,2.$ Let $A$ be a Borel subset of $X$. Applying \cite[Theorem 6.1 page 40]{Par} again we get that $\gamma(U\times X)\leq \liminf_{n\to\infty}\gamma_n(U\times X)\leq \mu(U)$ for every $U\subset X$ open.
Therefore
\begin{eqnarray*}
(\pi_1)_\sharp\gamma(A)&=&\gamma(A\times X)\\ 
&=&\inf\{\gamma(W): W\subset X\times X \mbox{ open }, A\times X\subset W\}\\
&\leq& \inf\{\gamma(U\times X): U\subset X\mbox{ open }, A\subset U\}\\
&\leq& \inf\{\mu(U): U\subset X\mbox{ open }, A\subset U\}=\mu(A).
\end{eqnarray*}
This means $(\pi_1)_\sharp \gamma\leq \mu_1.$ Similarly, we get that $(\pi_2)_\sharp \gamma\leq \mu_2.$
Therefore, $a\left\vert \mu_i-(\pi_i)_\sharp\gamma_n\right\vert\to a\left\vert \mu_i-(\pi_i)_\sharp\gamma\right\vert$ as $n\to \infty$, $i=1,2$. Hence, we get that $$a\left\vert \mu_1-(\pi_1)_\sharp\gamma\right\vert+a\left\vert \mu_2-(\pi_2)_\sharp\gamma\right\vert+b\int_{X\times X}d(x,y)d\gamma (x,y)\leq W^{a,b}_1\left(\mu_1,\mu_2\right)$$ 
Therefore, $\Opt^{a,b}(\mu_1,\mu_2)$ is nonempty.

Now we prove that $\Opt^{a,b}(\mu_1,\mu_2)$ is a compact subset of $\cM(X\times X)$. Let $\{\gamma_n\}_{n\in \N}$ be a sequence in $\Opt^{a,b}(\mu_1,\mu_2)$. Using the same argument as above we can get a subsequence of $\{\gamma_n\}_{n\in \N}$ converging to some $\gamma\in \Opt^{a,b}(\mu_1,\mu_2)$ in the weak*-topology. 
\end{proof}
Next, we will provide the optimality conditions for generalized Wasserstein distances in theorem \ref{T-existence of dual optimal} and theorem \ref{T-optimal plan and optimal dual}. These results are versions of \cite[Theorem 4.14 and Theorem 4.15]{Liero} for generalized Wasserstein distances. Note that as our nonsmooth entropy function $F(s)=a|1-s|$ is not superlinear and the cost function $b.d(\cdot,\cdot)$ does not have compact sublevels when $X$ is a general Polish metric space, they do not satisfy coercive conditions as in \cite[Theorem 4.14 and Theorem 4.15]{Liero}.
\begin{theorem}\label{T-existence of dual optimal}
Let $(X,d)$ be a Polish metric space and let $a,b>0$. For every $\mu_1,\mu_2\in \mathcal{M}(X)$, there exists $\left(\varphi_1,\varphi_2\right)\in\Phi_W$ such that $$W^{a,b}_1\left(\mu_1,\mu_2\right)=\sum_i\int_X I\left(\varphi_i(x)\right)d\mu_i(x).$$
\end{theorem}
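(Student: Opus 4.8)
The plan is to reduce the attainment problem to the Kantorovich--Rubinstein formulation of Theorem \ref{T-flat metrics} and then to exhibit the optimal dual pair in the special reduced form $(f,-f)$. Combining Theorem \ref{T-duality of generalized Wasserstein spaces} with Theorem \ref{T-flat metrics} gives
$$W_1^{a,b}(\mu_1,\mu_2)=\sup_{f\in\bF}\int_X f\,d(\mu_1-\mu_2),$$
so it suffices to show this last supremum is attained by some $f\in\bF$. Once such an $f$ is found, I would set $\varphi_1:=f$ and $\varphi_2:=-f$. Then $\varphi_1(x)+\varphi_2(y)=f(x)-f(y)\leq\|f\|_{Lip}\,d(x,y)\leq b\,d(x,y)$ and $\varphi_1,\varphi_2\in[-a,a]$, so $(\varphi_1,\varphi_2)\in\Phi_W$; moreover, using the explicit form $I(\varphi)=\varphi$ on $[-a,a]$, one has $I(\varphi_1)=f$ and $I(\varphi_2)=-f$, whence $\sum_i\int_X I(\varphi_i)\,d\mu_i=\int_X f\,d(\mu_1-\mu_2)=W_1^{a,b}(\mu_1,\mu_2)$, as required.

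For the attainment itself I would take a maximizing sequence $\{f_n\}\subset\bF$. This family is uniformly bounded by $a$ and equi-Lipschitz with constant $b$. Since $X$ is separable, I would fix a countable dense set $\{x_j\}$ and, by a diagonal argument applied to the bounded scalar sequences $\{f_n(x_j)\}_n$, extract a subsequence $\{f_{n_k}\}$ converging at every $x_j$. The equi-Lipschitz estimate $|f_{n_k}(x)-f_{n_k}(x_j)|\leq b\,d(x,x_j)$ then upgrades this to pointwise convergence on all of $X$: for arbitrary $x$ one shows $\{f_{n_k}(x)\}$ is Cauchy by inserting a nearby $x_j$ and using the Lipschitz bound twice. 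Call the limit $f$; as a pointwise limit of $b$-Lipschitz functions bounded by $a$, it is itself $b$-Lipschitz with $\|f\|_\infty\leq a$, so $f\in\bF$.

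To conclude I would pass to the limit in the integral. Because $|f_{n_k}|\leq a$ and the constant $a$ is integrable against the finite measures $\mu_1,\mu_2$, and $f_{n_k}\to f$ pointwise, dominated convergence yields $\int_X f_{n_k}\,d\mu_i\to\int_X f\,d\mu_i$ for $i=1,2$, so that $\int_X f\,d(\mu_1-\mu_2)$ equals the supremum and $f$ is a maximizer. The one genuinely delicate point is the non-compactness of $X$, which a priori could obstruct both the Arzel\`a--Ascoli extraction and the limit passage; I expect it to be tamed entirely by the uniform bound $\|f_n\|_\infty\leq a$, which makes the compactness extraction independent of any tightness hypothesis on the $\mu_i$ and simultaneously supplies the dominating constant for dominated convergence. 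Beyond organizing this diagonal-extraction-and-dominated-convergence argument carefully, I anticipate no serious obstacle.
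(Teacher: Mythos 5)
Your proposal is correct, and it reaches the conclusion by a genuinely different reduction than the paper. The paper works directly with near-optimal pairs $(\overline{\varphi}_{1,n},\overline{\varphi}_{2,n})\in\Phi_W$ from Theorem \ref{T-duality of generalized Wasserstein spaces}, regularizes each pair by a double $c$-transform (replacing $\varphi_1$ by $\inf_y[b\,d(\cdot,y)-\varphi_2(y)]$ and so on) to obtain $b$-Lipschitz functions with values in $[-a,a]$ that do at least as well, and only then extracts a pointwise-convergent subsequence via its Lemma \ref{L-pointwise convergent of Lipschitz functions} and passes to the limit by dominated convergence. You instead invoke Theorem \ref{T-flat metrics} to collapse the problem to maximizing $\int_X f\,d(\mu_1-\mu_2)$ over the single-function class $\bF$, prove attainment there by exactly the same Arzel\`a--Ascoli diagonal extraction and dominated-convergence passage, and then recover a dual pair in the antisymmetric form $(f,-f)$; the verifications that $(f,-f)\in\Phi_W$ and that $I(\pm f)=\pm f$ on $[-a,a]$ are correct. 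In effect you have outsourced the $c$-transform regularization to Theorem \ref{T-flat metrics} (whose proof in the paper is precisely that regularization), which makes your argument shorter and cleaner; the price is that you only ever exhibit optimal pairs of the special form $(f,-f)$, whereas the paper's construction produces an optimal pair as a limit of $c$-concave/$c$-convex conjugates, a form that is then exploited in the optimality conditions of Theorem \ref{T-optimal plan and optimal dual}. For the statement as written, either route suffices, and your compactness step correctly notes that the uniform bound $\|f_n\|_\infty\leq a$ removes any need for compactness of $X$ or tightness of the $\mu_i$.
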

Before giving the proof of theorem \ref{T-existence of dual optimal}, we prove the following elementary lemma.
\begin{lemma}\label{L-pointwise convergent of Lipschitz functions}
Let $(X,d)$ be a separable metric space and let $a,b>0$. If a sequence $\left\{\varphi_n\right\}$ in $C_b(X)$ satisfies that $\varphi_n$ is $b$-Lipschitz  and $\left\vert\varphi_n(x)\right\vert\leq a$ for every $x\in X$ and every $n\in\mathbb{N}$ then $\left\{\varphi_n\right\}$ has a pointwise convergent subsequence on $X$.
\end{lemma}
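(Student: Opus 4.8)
The plan is to run a Cantor diagonal extraction on a countable dense subset and then upgrade pointwise convergence there to pointwise convergence on all of $X$ using the uniform Lipschitz bound as equicontinuity. First I would fix a countable dense subset $D=\{z_1,z_2,\dots\}$ of $X$, which exists since $X$ is separable. Because $|\varphi_n(z_1)|\le a$ for every $n$, the Bolzano--Weierstrass theorem yields a subsequence along which $\{\varphi_n(z_1)\}$ converges; extracting successively at $z_2,z_3,\dots$ and passing to the diagonal subsequence $\{\varphi_{n_k}\}$, I obtain a single subsequence that converges at every point of $D$.

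Next I would show that $\{\varphi_{n_k}(x)\}$ converges for every $x\in X$, not only on $D$. Given $x\in X$ and $\varepsilon>0$, choose $z_j\in D$ with $d(x,z_j)<\varepsilon/(3b)$. Using that each $\varphi_{n_k}$ is $b$-Lipschitz, for all indices $k,l$ one has the estimate $|\varphi_{n_k}(x)-\varphi_{n_l}(x)|\le b\,d(x,z_j)+|\varphi_{n_k}(z_j)-\varphi_{n_l}(z_j)|+b\,d(z_j,x)$. The two outer terms are each $<\varepsilon/3$, while the middle term is $<\varepsilon/3$ once $k,l$ are large, because $\{\varphi_{n_k}(z_j)\}$ is a convergent, hence Cauchy, real sequence. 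Thus $\{\varphi_{n_k}(x)\}$ is Cauchy in $\mathbb{R}$ and therefore converges, yielding a pointwise limit on all of $X$.

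The argument is essentially routine; the only point requiring care is that $X$ is merely separable and not assumed compact, so one should not expect uniform convergence and must rely on the diagonal procedure to handle the countably many base points simultaneously. It is the common Lipschitz constant $b$ that supplies the equicontinuity needed to pass from $D$ to $X$, while the uniform bound $|\varphi_n|\le a$ enters only to start the extraction, guaranteeing that each scalar sequence $\{\varphi_n(z_i)\}$ is bounded. Hence the main (modest) obstacle is simply organizing the diagonal extraction correctly and verifying the equicontinuity step.
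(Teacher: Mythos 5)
Your proposal is correct and follows essentially the same route as the paper: a diagonal extraction over a countable dense subset using the uniform bound $|\varphi_n|\leq a$, followed by a Cauchy estimate at an arbitrary point via the common Lipschitz constant $b$. No gaps.
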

\begin{proof}
Let $S=\left\lbrace s_1,s_2,\ldots\right\rbrace$ be a countable dense subset of $X$. As $\vert\varphi_n(s)\vert\leq a$ for every $n\in\mathbb{N},s\in S$, using a standard diagonal argument there exists a subsequence of $\left\{\varphi_n\right\}$ which we still denote by $\left\{\varphi_n\right\}$ such that $\varphi_n(s)$ converges as $n\to \infty$ for every $s\in S$.
As $S$ is dense in $X$, for every $x\in X$ and every $\varepsilon >0$ there exists $s\in S$ such that $d\left(x,s\right) < \varepsilon/b$. Since $\left\lbrace \varphi_m (s)\right\rbrace$ converges, there exists $N>0$ such that for every $m_1,m_2>N$ we have $\left\vert \varphi_{m_1}(s) - \varphi_{m_2} (s)\right\vert < \varepsilon$. Furthermore, $\varphi_{m_1}$ and $\varphi_{m_2}$ are $b$-Lipschitz on $X$. Therefore,
\begin{align*}
\left\vert \varphi_{m_1}(x) - \varphi_{m_2} (x)\right\vert &\leq \left\vert \varphi_{m_1}(x) - \varphi_{m_1} (s)\right\vert + \left\vert \varphi_{m_2}(x) - \varphi_{m_2} (s)\right\vert +\left\vert \varphi_{m_1}(s) - \varphi_{m_2} (s)\right\vert \\
&< 2b.d\left(x,s\right) +\varepsilon\\
&< 3\varepsilon.
\end{align*}
Hence, we get the result.
\end{proof}

\begin{proof}[Proof of Theorem \ref{T-existence of dual optimal}]
For every $n\in \mathbb{N},n\geq 1$, using theorem \ref{T-duality of generalized Wasserstein spaces}, we get that there exists $\left(\overline{\varphi}_{1,n},\overline{\varphi}_{2,n}\right)\in\Phi_W$ such that $$\sum_i\int_XI\left(\overline{\varphi}_{i,n}(x)\right)d\mu_i(x)\geq W^{a,b}_1\left(\mu_1,\mu_2\right)-\dfrac{1}{n}.$$
For every $x\in X$, we define $\widetilde{\varphi}_{1,n}(x):=\inf_{y\in X}\left[b.d(x,y)-\overline{\varphi}_{2,n}(y)\right]$ and for every $y\in X$ we define $\widetilde{\varphi}_{2,n}(x):=\inf_{x\in X}\left[b.d(x,y)-\widetilde{\varphi}_{1,n}(x)\right]$. Then $\left(\widetilde{\varphi}_{1,n}, \widetilde{\varphi}_{2,n} \right)\in \Phi_W$ and $\widetilde{\varphi}_{i,n}$ is $b$-Lipschitz on $X, i=1,2$. Moreover, $\widetilde{\varphi}_{i,n}(x)\geq \overline{\varphi}_{i,n}(x)$ for every $x\in X, i=1,2$. Thus, $$\sum_i\int_XI\left(\widetilde{\varphi}_{i,n}(x)\right)d\mu_i(x)\geq \sum_i\int_XI\left(\overline{\varphi}_{i,n}(x)\right)d\mu_i(x).$$
Applying lemma \ref{L-pointwise convergent of Lipschitz functions}, there exists a subsequence $\left\{\widetilde{\varphi}_{1,n_k}\right\}_k$ pointwise convergent to $\widetilde{\varphi}_1$ on $X$. Then $\widetilde{\varphi}_1(x)\in [-a,a]$ for every $x\in X$. We now consider the subsequence $\left\{\widetilde{\varphi}_{2,n_k}\right\}_k$, it is clear that $\widetilde{\varphi}_{2,n_k}$ is $b$-Lipschitz and $\widetilde{\varphi}_{2,n_k}(y)\in [-a,a]$ for every $y\in X$. Thus, using lemma \ref{L-pointwise convergent of Lipschitz functions} again we obtain that there exists a subsequence $\left\{\widetilde{\varphi}_{2,n_{k_l}}\right\}_l$ pointwise convergent to $\widetilde{\varphi}_2$ on $X$. Then $\widetilde{\varphi}_2 (y)\in [-a,a]$ for every $y\in X$. For every $x,y\in X$, from $\widetilde{\varphi}_{1,n_{k_l}}(x)+ \widetilde{\varphi}_{2,n_{k_l}}(y)\leq b.d(x,y)$ we also have $\widetilde{\varphi}_1(x)+\widetilde{\varphi}_2(y)\leq b.d(x,y)$. Since the Lebesgue's dominated convergence theorem and $I(\varphi)=\varphi$ for every $\varphi \in [-a,a]$, we obtain
\begin{align*}
\sum_i\int_XI\left(\widetilde{\varphi}_{i}(x)\right)d\mu_i(x)& = \lim_{l\rightarrow \infty} \sum_i\int_XI\left(\widetilde{\varphi}_{i,n_{k_l}}(x)\right)d\mu_i(x)\\
& \geq \lim_{l\rightarrow \infty} \sum_i\int_XI\left(\overline{\varphi}_{i,n_{k_l}}(x)\right)d\mu_i(x)\\
& \geq W^{a,b}_1\left(\mu_1,\mu_2\right).
\end{align*}
Now, for every $x\in X$, we define $\varphi_1(x):=\inf_{y\in X}\left[b.d(x,y)-\widetilde{\varphi}_2(y)\right]$ and for every $y\in X$, we define $\varphi_2(y):=\inf_{x\in X}\left[b.d(x,y)-\varphi_1(x)\right]$. Then $\left(\varphi_1,\varphi_2\right)\in \Phi_W$ and $\varphi_i(x)\geq \widetilde{\varphi}_i(x)$ for every $x\in X, i=1,2$. Hence, we get that \begin{align*}
\sum_i\int_XI\left(\varphi_{i}(x)\right)d\mu_i(x) & \geq \sum_i\int_XI\left(\widetilde{\varphi}_{i}(x)\right)d\mu_i(x)\\
& \geq W^{a,b}_1\left(\mu_1,\mu_2\right).
\end{align*}
Therefore, $\sum_i\int_XI\left(\varphi_{i}(x)\right)d\mu_i(x)=W^{a,b}_1\left(\mu_1,\mu_2\right)$.
\end{proof}
We say that a pair $\left(\varphi_1,\varphi_2\right)\in\Phi_W$ is a dual optimal for $W^{a,b}_1\left(\mu_1,\mu_2\right)$ if $W^{a,b}_1\left(\mu_1,\mu_2\right)=\sum_i\int_XI\left(\varphi_{i}(x)\right)d\mu_i(x)$.
\begin{corollary}
Let a compact group $G$ act on the right of a locally compact Polish metric space $(X,d_X)$ by isometries. Let $p:X\to X/G$ be the natural quotient map and let any $\mu_1,\mu_2\in \mathcal{M}^G(X)$. If a pair $\left(\overline{\varphi}_1,\overline{\varphi}_2\right)$ is a dual optimal for $W^{a,b}_1\left(p_\sharp\mu_1,p_\sharp\mu_2\right)$ then $\left(\varphi_1,\varphi_2\right)$ is also a dual optimal for $W^{a,b}_1\left(\mu_1,\mu_2\right)$, where $\varphi_i$ is defined by $\varphi_i:={\overline{\varphi}_{i}}\circ p,i=1,2$.
\end{corollary}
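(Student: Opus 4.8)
The plan is to exploit the duality of theorem \ref{T-duality of generalized Wasserstein spaces} together with the isometry of theorem \ref{T-isometry}. Since the duality identity expresses $W^{a,b}_1\left(\mu_1,\mu_2\right)$ as a supremum over $\Phi_W$, every feasible pair $\left(\varphi_1,\varphi_2\right)\in\Phi_W$ automatically satisfies $\sum_i\int_X I\left(\varphi_i(x)\right)d\mu_i(x)\le W^{a,b}_1\left(\mu_1,\mu_2\right)$. Hence it suffices to verify two things: first, that the lifted pair $\varphi_i=\overline{\varphi}_i\circ p$ indeed belongs to $\Phi_W$ on $X$; and second, that its dual objective already attains the value $W^{a,b}_1\left(\mu_1,\mu_2\right)$, which then forces equality.

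For feasibility, I would first note that $\varphi_i=\overline{\varphi}_i\circ p\in C_b(X)$, since $p$ is continuous and $\overline{\varphi}_i\in C_b(X/G)$. The admissibility constraint on $X$ reduces to the one on $X/G$: for all $x,y\in X$,
\begin{align*}
\varphi_1(x)+\varphi_2(y)=\overline{\varphi}_1(p(x))+\overline{\varphi}_2(p(y))\le b\cdot d_{X/G}(p(x),p(y))\le b\cdot d_X(x,y),
\end{align*}
where the last step uses $d_{X/G}(p(x),p(y))\le d_X(x,y)$ (take $g$ to be the identity in the infimum defining $d_{X/G}$). The lower bound $\varphi_i(x)=\overline{\varphi}_i(p(x))\ge -a$ is inherited pointwise, so $\left(\varphi_1,\varphi_2\right)\in\Phi_W$.

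For the value, the key identity is the change of variables for the push-forward: for the bounded Borel function $I\circ\overline{\varphi}_i$ on $X/G$ one has $\int_X (I\circ\overline{\varphi}_i)(p(x))d\mu_i(x)=\int_{X/G}(I\circ\overline{\varphi}_i)(x^*)d(p_\sharp\mu_i)(x^*)$. Summing over $i$ and using that $\left(\overline{\varphi}_1,\overline{\varphi}_2\right)$ is a dual optimal for $W^{a,b}_1\left(p_\sharp\mu_1,p_\sharp\mu_2\right)$ gives
\begin{align*}
\sum_i\int_X I\left(\varphi_i(x)\right)d\mu_i(x)=\sum_i\int_{X/G} I\left(\overline{\varphi}_i(x^*)\right)d(p_\sharp\mu_i)(x^*)=W^{a,b}_1\left(p_\sharp\mu_1,p_\sharp\mu_2\right).
\end{align*}
Since $\mu_1,\mu_2\in\cM^G(X)$, theorem \ref{T-isometry}(3) yields $W^{a,b}_1\left(p_\sharp\mu_1,p_\sharp\mu_2\right)=W^{a,b}_1\left(\mu_1,\mu_2\right)$; combining this with the feasibility bound above I conclude $\sum_i\int_X I\left(\varphi_i(x)\right)d\mu_i(x)=W^{a,b}_1\left(\mu_1,\mu_2\right)$, i.e. $\left(\varphi_1,\varphi_2\right)$ is dual optimal.

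The argument is essentially routine once the earlier machinery is in place; the only point doing real work is the appeal to the isometry theorem \ref{T-isometry}(3), which is precisely what converts the optimal dual value on the quotient into the optimal dual value upstairs. The remaining ingredients — the pointwise transfer of the $\Phi_W$ constraints through $p$ (controlled by $d_{X/G}\le d_X$) and the push-forward change of variables — present no genuine obstacle.
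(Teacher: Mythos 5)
Your proof is correct and follows essentially the same route as the paper's: verify $(\varphi_1,\varphi_2)\in\Phi_W$ via $d_{X/G}(p(x),p(y))\le d_X(x,y)$, then combine the push-forward change of variables with Theorem \ref{T-isometry} to identify the dual value upstairs with the one on the quotient. No gaps.
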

\begin{proof}
Since $\overline{\varphi}_i\left(x^*\right)\in [-a,a]$  for every $x^*\in X/G$ and $\overline{\varphi}_i\in C_b(X/G)$, we get that $\varphi_i(x)\in [-a,a]$ for every $x\in X$ and $\varphi_i\in C_b(X)$, $i=1,2$, since $p$ is continuous. Moreover, for every $x,y\in X$ one has $\varphi_1(x)+\varphi_2(y)=\overline{\varphi}_1\left(x^*\right)+\overline{\varphi}_2\left(y^*\right)\leq b.d_{X/G}\left(x^*,y^*\right)\leq b.d_{X}\left(x,y\right)$. Therefore, $\left(\varphi_1,\varphi_2\right)\in\Phi_W$. Since $\left(\overline{\varphi}_1,\overline{\varphi}_2\right)$ is a dual optimal for $W^{a,b}_1\left(p_\sharp\mu_1, p_\sharp\mu_2\right)$ and using theorem \ref{T-isometry} we get that \begin{align*}
W^{a,b}_1\left(\mu_1,\mu_2\right) & = W^{a,b}_1\left(p_\sharp\mu_1, p_\sharp\mu_2\right)\\
& = \sum_i\int_{X/G} I \left(\overline{\varphi}_i\left(x^*\right)\right)dp_\sharp\mu_i\left(x^*\right)\\
& = \sum_i\int_XI\left(\overline{\varphi}_i(p(x))\right)d\mu_i(x)\\
& = \sum_i\int_X I(\varphi_i(x))d\mu_i(x).
\end{align*}
Hence, $\left(\varphi_1,\varphi_2\right)$ is a dual optimal for $W^{a,b}_1\left(\mu_1,\mu_2\right)$.
\end{proof}
The above result has been proved for Wasserstein distances in \cite[Corollary 3.4]{Garcia}.

Next, we provide the conditions between a optimal plan $\gamma\in\Opt^{a,b}\left(\mu_1,\mu_2\right)$ and a dual optimal $\left(\varphi_1,\varphi_2\right)$.
\begin{theorem}
\label{T-optimal plan and optimal dual}
Let $(X,d)$ be a Polish metric space and let $\mu_1,\mu_2\in\mathcal{M}(X),\gamma\in M^\leq(\mu_1,\mu_2)$. Then for every $a,b>0$ the plan $\gamma\in \Opt^{a,b}\left(\mu_1,\mu_2\right)$ if and only if there exist a pair $\left(\varphi_1,\varphi_2\right)\in \Phi_W$ and two Borel subsets $A_1,A_2$ of $X$ that satisfy the following conditions 
\begin{enumerate}[(i)]
\item $\gamma_i(X\minus A_i)=\mu_i^\perp (A_i)=0,i=1,2$, where $\gamma_i$ is the marginal of $\gamma$ and $\mu_i=g_i\gamma_i+\mu_i^\perp$ is the Lebegues decomposition of $\mu_i$ with respect to $\gamma_i$.
\item $\varphi_1(x)+\varphi_2(y)=b.d(x,y)$ $\gamma$-a.e in $X\times X$.
\item $\left(a-\varphi_i(x)\right)\left(1-f_i(x)\right)=0$ $\mu_i$-a.e in $A_i$, $i=1,2$, where $f_i: X\rightarrow [0,1]$ is the Borel density of $\gamma_i$ with respect to $\mu_i$.
\item $\varphi_i(x)=a$ $\mu_i^\perp$-a.e in $X\minus A_i, i=1,2$.
\end{enumerate}
\end{theorem}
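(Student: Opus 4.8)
The plan is to derive the characterization from the two successive inequalities already present in the proof of Lemma~\ref{L-easy part of duality}, by noting that ``$\gamma$ optimal together with $(\varphi_1,\varphi_2)$ dual optimal'' is precisely the statement that both inequalities are equalities. First I would record that every $(\varphi_1,\varphi_2)\in\Phi_W$ satisfies $\varphi_i\in[-a,a]$: testing $\varphi_1(x)+\varphi_2(y)\leq b\,d(x,y)$ at $y=x$ gives $\varphi_1(x)+\varphi_2(x)\leq 0$, which with $\varphi_i\geq -a$ forces $\varphi_i\leq a$; hence $I(\varphi_i)=\varphi_i$ everywhere and dual optimality of $(\varphi_1,\varphi_2)$ reads $W_1^{a,b}(\mu_1,\mu_2)=\sum_i\int_X\varphi_i\,d\mu_i$.

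The core observation is that, writing $\gamma_i=f_i\mu_i$ with $f_i:X\to[0,1]$ (possible since $\gamma_i\leq\mu_i$), for any $\gamma\in M^\leq(\mu_1,\mu_2)$ and any $(\varphi_1,\varphi_2)\in\Phi_W$ one has
$$a|\mu_1-\gamma_1|+a|\mu_2-\gamma_2|+b\int d\,d\gamma \;\geq\; \sum_i\int_X\big(a(1-f_i)+f_i\varphi_i\big)\,d\mu_i \;\geq\; \sum_i\int_X\varphi_i\,d\mu_i,$$
where the first gap equals $\int_{X\times X}\big(b\,d(x,y)-\varphi_1(x)-\varphi_2(y)\big)\,d\gamma\geq 0$ and the second equals $\sum_i\int_X(a-\varphi_i)(1-f_i)\,d\mu_i\geq 0$, both integrands being pointwise nonnegative. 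Since the left side is always $\geq W_1^{a,b}(\mu_1,\mu_2)$ (Remark~\ref{R-duality for generalized Wassertein spaces}) and the right side always $\leq W_1^{a,b}(\mu_1,\mu_2)$ (Theorem~\ref{T-duality of generalized Wasserstein spaces}), the simultaneous vanishing of both gaps is equivalent to $\gamma\in\Opt^{a,b}(\mu_1,\mu_2)$ and $(\varphi_1,\varphi_2)$ being dual optimal.

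For the forward implication, given $\gamma\in\Opt^{a,b}(\mu_1,\mu_2)$ I would invoke Theorem~\ref{T-existence of dual optimal} to produce a dual optimal $(\varphi_1,\varphi_2)\in\Phi_W$; both gaps then vanish. Vanishing of the first yields (ii), namely $\varphi_1(x)+\varphi_2(y)=b\,d(x,y)$ $\gamma$-a.e. Vanishing of the second gives $(a-\varphi_i)(1-f_i)=0$ $\mu_i$-a.e. on $X$. I would then define $A_i$ through the Lebesgue decomposition $\mu_i=g_i\gamma_i+\mu_i^\perp$: using $\mu_i^\perp\perp\gamma_i$, choose Borel $A_i$ with $\mu_i^\perp(A_i)=\gamma_i(X\minus A_i)=0$, which is exactly (i). Condition (iii) is the restriction of the a.e.\ identity to $A_i$; for (iv), $\gamma_i(X\minus A_i)=0$ forces $f_i=0$ $\mu_i$-a.e. on $X\minus A_i$, so the a.e.\ identity there reduces to $a-\varphi_i=0$, and since $\mu_i^\perp\leq\mu_i$ this gives $\varphi_i=a$ $\mu_i^\perp$-a.e. on $X\minus A_i$.

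For the converse I would reverse this bookkeeping: assuming (i)--(iv), split $\int_X(a-\varphi_i)(1-f_i)\,d\mu_i$ over $A_i$ and $X\minus A_i$; on $A_i$ it vanishes by (iii), while on $X\minus A_i$ condition (i) makes $\mu_i$ agree with $\mu_i^\perp$ and forces $f_i=0$, so (iv) kills it as well. Hence the second gap is zero, (ii) makes the first gap zero, equality propagates through the chain, and $\gamma\in\Opt^{a,b}(\mu_1,\mu_2)$. The main point requiring care is the interplay between the two densities---$f_i$ from $\gamma_i=f_i\mu_i$ versus $g_i,\mu_i^\perp$ from the Lebesgue decomposition of $\mu_i$ against $\gamma_i$---and specifically the passage between ``$\mu_i$-a.e. on $X\minus A_i$'' and ``$\mu_i^\perp$-a.e. on $X\minus A_i$'', which rests on $\mu_i$ and $\mu_i^\perp$ coinciding off $A_i$.
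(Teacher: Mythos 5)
Your proposal is correct and follows essentially the same route as the paper: the same chain of two inequalities sandwiched between the lower bound of Remark~\ref{R-duality for generalized Wassertein spaces} and the upper bound of Theorem~\ref{T-duality of generalized Wasserstein spaces}, with Theorem~\ref{T-existence of dual optimal} supplying the dual optimizer and conditions (i)--(iv) read off as the forced complementary-slackness equalities. The only cosmetic difference is that you first extract the single identity $(a-\varphi_i(x))(1-f_i(x))=0$ $\mu_i$-a.e.\ on all of $X$ and then localize to $A_i$ and $X\setminus A_i$, whereas the paper splits the integrals over $A_i$ and $X\setminus A_i$ before forcing equality.
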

\begin{proof}
Let $\gamma\in \Opt^{a,b}\left(\mu_1,\mu_2\right)$. For each $i\in\{1,2\}$, since $\mu_i^\perp\perp \gamma_i$, there exists a Borel subset $A_i$ of $X$ such that $\gamma_i(X\minus A_i)=\mu_i^\perp (A_i)=0$. By theorem \ref{T-existence of dual optimal}, let $\left(\varphi_1,\varphi_2\right)\in \Phi_W$ be a dual optimal for $W^{a,b}_1\left(\mu_1,\mu_2\right)$. Since $\varphi_i(x)\in [-a,a]$ for every $x\in X$, $I\left(\varphi_i(x)\right)=\inf_{s\geq 0}(s\varphi_i(x)+a|1-s|)=\varphi_i(x),i=1,2$. Hence, we get that \begin{align*}
\sum_i\int_X\varphi_i(x)d\mu_i(x) & = W^{a,b}_1\left(\mu_1,\mu_2\right)\\
& = a\left(\left\vert\mu_1-\gamma_1\right\vert+\left\vert\mu_2-\gamma_2\right\vert\right)+b\int_{X\times X}d(x,y)d\gamma (x,y)\\
& \geq \sum_i\int_X a\left(1-f_i(x)\right)d\mu_i (x)+\int_X\varphi_1(x)d\gamma(x,y)+\int_X\varphi_2(y)d\gamma(x,y)\\
& = \sum_i \big[\int_{A_i} a\left(1-f_i(x)\right)d\mu_i (x)+\int_{X\minus A_i} a\left(1-f_i(x)\right)d\mu_i (x)+\int_{A_i}\varphi_i(x)d\gamma_i(x)\big]\\
& = \sum_i \big[\int_{A_i} \left[a\left(1-f_i(x)\right)+ f_i(x)\varphi_i(x)\right] d\mu_i(x)+\int_{X\minus A_i} a\left(1-f_i(x)\right)d\mu_i (x)\big].
\end{align*} 
Since $\gamma_i(X\minus A_i)=0,i=1,2$ and $\varphi_i(x)\leq a$ for every $x\in X$, one has $$\int_{X\minus A_i} a\left(1-f_i(x)\right)d\mu_i (x) = a\int_{X\minus A_i}  d\mu_i^\perp (x)-a\gamma_i(X\minus A_i)\geq \int_{X\minus A_i} \varphi_i(x)d\mu_i^\perp (x).$$
Moreover, from $f_i(x)\in [0,1], \varphi_i(x)\in [-a,a]$ for every $x\in X$, we get that $\left(a-\varphi_i(x)\right)\left(1-f_i(x)\right)\geq 0$ or $a\left(1-f_i(x)\right)+f_i(x)\varphi_i(x)\geq \varphi_i(x)$ for every $x\in X$. Therefore,
\begin{align*}
\sum_i\int_X\varphi_i(x)d\mu_i(x) & \geq \sum_i\big(\int_{A_i}\varphi_i(x)d\mu_i(x)+\int_{X\minus A_i}\varphi_i(x)d\mu_i^\perp (x)\big)\\
& = \sum_i\int_X\varphi_i(x)d\mu_i(x).
\end{align*}
Hence, we must have equality everywhere and we get the conditions $(i)-(iv)$.

Conversely, assume that there exist $\left(\varphi_1,\varphi_2\right)\in \Phi_W$ and two Borel subsets $A_1,A_2$ of $X$ that satisfy four conditions $(i)-(iv)$. Since the conditions $(i)$ and $(ii)$ we obtain \begin{align*}
a\left(\left\vert\mu_1-\gamma_1\right\vert+\left\vert\mu_2-\gamma_2\right\vert\right)+b\int_{X\times X}d(x,y)d\gamma (x,y) & = \sum_i\int_X a\left(1-f_i(x)\right)d\mu_i(x)+\int_X\varphi_i(x)d\gamma_i(x)\\
&= \sum_i\int_{A_i}\left[a\left(1-f_i(x)\right)+f_i(x)\varphi_i(x)\right]d\mu_i(x)\\
&+\int_{X\minus A_i}a\left(1-f_i(x)\right)d\mu_i(x).
\end{align*}
On the other hand, from the conditions $(i)$ and $(iv)$, for each $i\in\{1,2\}$ we get that $$\int_{X\minus A_i}a\left(1-f_i(x)\right)d\mu_i(x)=a\int_{X\minus A_i}d\mu_i^\perp(x)-a\gamma_i\left(X\minus A_i\right)=\int_{X\minus A_i}\varphi_i(x)d\mu_i^\perp(x).$$
Furthermore, the condition $(iii)$ implies that $$\int_{A_i}\left[a\left(1-f_i(x)\right)+f_i(x)\varphi_i(x)\right]d\mu_i(x)=\int_{A_i}\varphi_i(x)d\mu_i(x),\;i=1,2.$$
Hence, we get that \begin{align*}
a\left(\left\vert\mu_1-\gamma_1\right\vert+\left\vert\mu_2-\gamma_2\right\vert\right)+b\int_{X\times X}d(x,y)d\gamma (x,y) & = \sum_i\int_{A_i}\varphi_i(x)d\mu_i(x)+\int_{X\minus A_i}\varphi_i(x)d\mu_i^\perp(x)\\
&= \sum_i\int_X I\left(\varphi_i(x)\right)d\mu_i(x)\\
& \leq W^{a,b}_1\left(\mu_1,\mu_2\right).
\end{align*}
However, we always have the opposite inequality. Therefore, $\gamma\in \Opt^{a,b}(\mu_1,\mu_2)$.
\end{proof}
\begin{bibdiv}
\begin{biblist}
\bib{Ambrosio}{book}{
   author={Ambrosio, Luigi},
   author={Gigli, Nicola},
   author={Savar\'{e}, Giuseppe},
   title={Gradient flows in metric spaces and in the space of probability
   measures},
   series={Lectures in Mathematics ETH Z\"{u}rich},
   edition={2},
   publisher={Birkh\"{a}user Verlag, Basel},
   date={2008},
  }
\bib{AM}{article}{
   author={Arbieto, Alexander},
   author={Morales, Carlos Arnoldo},
   title={Topological stability from Gromov-Hausdorff viewpoint},
   journal={Discrete Contin. Dyn. Syst.},
   volume={37},
   date={2017},
   number={7},
   pages={3531--3544},
 }   

\bib{Basso}{article}{
   author={Basso, Giuliano },
   title={A Hitchhiker's guide to Wasserstein distances},
   status={http://n.ethz.ch/~gbasso/},
   }
\bib{Bourbaki}{book}{
   author={Bourbaki, Nicolas},
   title={General topology. Chapters 5--10},
   series={Elements of Mathematics (Berlin)},
   note={Translated from the French;
   Reprint of the 1989 English translation},
   publisher={Springer-Verlag, Berlin},
   date={1998},
  }
\bib{Bourbaki1}{book}{
   author={Bourbaki, Nicolas},
   title={Integration. II. Chapters 7--9},
   series={Elements of Mathematics (Berlin)},
   note={Translated from the 1963 and 1969 French originals by Sterling K.
   Berberian},
   publisher={Springer-Verlag, Berlin},
   date={2004},
  
}  
\bib{Burago}{book}{
   author={Burago, Dmitri},
   author={Burago, Yuri},
   author={Ivanov, Sergei},
   title={A course in metric geometry},
   series={Graduate Studies in Mathematics},
   volume={33},
   publisher={American Mathematical Society, Providence, RI},
   date={2001},
  }  
\bib{CPSV}{article}{
   author={Chizat, L\'{e}na\"{\i}c},
   author={Peyr\'{e}, Gabriel},
   author={Schmitzer, Bernhard},
   author={Vialard, Fran\c{c}ois-Xavier},
   title={Unbalanced optimal transport: dynamic and Kantorovich
   formulations},
   journal={J. Funct. Anal.},
   volume={274},
   date={2018},
   number={11},
   pages={3090--3123},
  
}
		  
\bib{NPC}{article}{
   author={Chung, Nhan-Phu},
   title={Gromov-Hausdorff Distances for Dynamical systems},
   status={arXiv:1901.07232},
   }
\bib{DLM}{article}{
   author={Dong, Meihua},
   author={Lee, Keonhee},
   author={Morales, Carlos},  
   title={Gromov-Hausdorff perturbations of group actions},
   status={preprint},
   }  
  \bib{Fukaya}{article}{
   author={Fukaya, Kenji},
   title={Hausdorff convergence of Riemannian manifolds and its
   applications},
   conference={
      title={Recent topics in differential and analytic geometry},
   },
   book={
      series={Adv. Stud. Pure Math.},
      volume={18},
      publisher={Academic Press, Boston, MA},
   },
   date={1990},
   pages={143--238},
}
\bib{Fu86}{article}{
   author={Fukaya, Kenji},
   title={Theory of convergence for Riemannian orbifolds},
   journal={Japan. J. Math. (N.S.)},
   volume={12},
   date={1986},
   number={1},
   pages={121--160},
  
}

	\bib{Fu88}{article}{
   author={Fukaya, Kenji},
   title={A boundary of the set of the Riemannian manifolds with bounded
   curvatures and diameters},
   journal={J. Differential Geom.},
   volume={28},
   date={1988},
   number={1},
   pages={1--21},
   
}	
   		
\bib{FY}{article}{
   author={Fukaya, Kenji},
   author={Yamaguchi, Takao},
   title={The fundamental groups of almost non-negatively curved manifolds},
   journal={Ann. of Math. (2)},
   volume={136},
   date={1992},
   number={2},
   pages={253--333},
  
}   
\bib{Garcia}{article}{
   author={Galaz-Garc\'{\i}a, Fernando},
   author={Kell, Martin},
   author={Mondino, Andrea},
   author={Sosa, Gerardo},
   title={On quotients of spaces with Ricci curvature bounded below},
   journal={J. Funct. Anal.},
   volume={275},
   date={2018},
   number={6},
   pages={1368--1446},
   
}
\bib{Hanin1}{article}{
   author={Hanin, Leonid G.},
   title={Kantorovich-Rubinstein norm and its application in the theory of
   Lipschitz spaces},
   journal={Proc. Amer. Math. Soc.},
   volume={115},
   date={1992},
   number={2},
   pages={345--352},
   }
\bib{Hanin2}{article}{
   author={Hanin, Leonid G.},
   title={An extension of the Kantorovich norm},
   conference={
      title={Monge Amp\`ere equation: applications to geometry and
      optimization},
      address={Deerfield Beach, FL},
      date={1997},
   },
   book={
      series={Contemp. Math.},
      volume={226},
      publisher={Amer. Math. Soc., Providence, RI},
   },
   date={1999},
   pages={113--130},
  
}

\bib{Kant42}{article}{
   author={Kantorovitch, L.},
   title={On the translocation of masses},
   journal={C. R. (Doklady) Acad. Sci. URSS (N.S.)},
   volume={37},
   date={1942},
   pages={199--201},
  }
\bib{Kant48}{article}{
   author={Kantorovich, L. V.},
   title={On a problem of Monge},
   language={Russian},
   journal={Zap. Nauchn. Sem. S.-Peterburg. Otdel. Mat. Inst. Steklov.
   (POMI)},
   volume={312},
   date={2004},
   number={Teor. Predst. Din. Sist. Komb. i Algoritm. Metody. 11},
   pages={15--16},
   issn={0373-2703},
   translation={
      journal={J. Math. Sci. (N.Y.)},
      volume={133},
      date={2006},
      number={4},
      pages={1383},
      issn={1072-3374},
   },
  
}  
 \bib{KDD}{article}{
   author={Khan, Abdul Gaffar},
   author={Das, Pramod},
   author={Das, Tarun},  
   title={GH-stability and spectral decomposition for group actions},
   status={arXiv:1804.05920v3},
   }
\bib{KMV}{article}{
   author={Kondratyev, Stanislav},
   author={Monsaingeon, L\'{e}onard},
   author={Vorotnikov, Dmitry},
   title={A new optimal transport distance on the space of finite Radon
   measures},
   journal={Adv. Differential Equations},
   volume={21},
   date={2016},
   number={11-12},
   pages={1117--1164},
  }
\bib{Liero}{article}{
   author={Liero, Matthias},
   author={Mielke, Alexander},
   author={Savar\'{e}, Giuseppe},
   title={Optimal entropy-transport problems and a new Hellinger-Kantorovich
   distance between positive measures},
   journal={Invent. Math.},
   volume={211},
   date={2018},
   number={3},
   pages={969--1117},
  
}
\bib{LV}{article}{
   author={Lott, John},
   author={Villani, C\'{e}dric},
   title={Ricci curvature for metric-measure spaces via optimal transport},
   journal={Ann. of Math. (2)},
   volume={169},
   date={2009},
   number={3},
   pages={903--991}, 
}  
\bib{Par}{book}{
   author={Parthasarathy, K. R.},
   title={Probability measures on metric spaces},
   note={Reprint of the 1967 original},
   publisher={AMS Chelsea Publishing, Providence, RI},
   date={2005},
  
} 
\bib{PR14}{article}{
   author={Piccoli, Benedetto},
   author={Rossi, Francesco},
   title={Generalized Wasserstein distance and its application to transport
   equations with source},
   journal={Arch. Ration. Mech. Anal.},
   volume={211},
   date={2014},
   number={1},
   pages={335--358},
  
}
\bib{PR16}{article}{
   author={Piccoli, Benedetto},
   author={Rossi, Francesco},
   title={On properties of the generalized Wasserstein distance},
   journal={Arch. Ration. Mech. Anal.},
   volume={222},
   date={2016},
   number={3},
   pages={1339--1365},
   
}
\bib{PRT}{article}{
   author={Piccoli, Benedetto},
   author={Rossi, Francesco},
   author={Tournus, Magali},
   title={A Wasserstein norm for signed measures, with application to non local transport equation with source term},
   status={hal-01665244v3},
   
}
\bib{Shioya}{book}{
   author={Shioya, Takashi},
   title={Metric measure geometry},
   series={IRMA Lectures in Mathematics and Theoretical Physics},
   volume={25},
   note={Gromov's theory of convergence and concentration of metrics and
   measures},
   publisher={EMS Publishing House, Z\"{u}rich},
   date={2016},
  
}
\bib{Sturm}{article}{
   author={Sturm, Karl-Theodor},
   title={On the geometry of metric measure spaces. I},
   journal={Acta Math.},
   volume={196},
   date={2006},
   number={1},
   pages={65--131},
   issn={0001-5962},
}
\bib{V03}{book}{
   author={Villani, C\'{e}dric},
   title={Topics in optimal transportation},
   series={Graduate Studies in Mathematics},
   volume={58},
   publisher={American Mathematical Society, Providence, RI},
   date={2003},
  
}  
		
\bib{V09}{book}{
   author={Villani, C\'{e}dric},
   title={Optimal transport},
   series={Grundlehren der Mathematischen Wissenschaften [Fundamental
   Principles of Mathematical Sciences]},
   volume={338},
   note={Old and new},
   publisher={Springer-Verlag, Berlin},
   date={2009},
  }
\bib{Wijsman}{book}{
   author={Wijsman, Robert A.},
   title={Invariant measures on groups and their use in statistics},
   series={Institute of Mathematical Statistics Lecture Notes---Monograph
   Series},
   volume={14},
   publisher={Institute of Mathematical Statistics, Hayward, CA},
   date={1990},
  
}

\end{biblist}
\end{bibdiv}

\end{document}